\title{
Peak reduction and finite presentations for automorphism groups of right-angled Artin groups
}
\author{Matthew B. Day }
\date{July 29, 2008}
\theoremstyle{plain} \newtheorem{theorem}{Theorem}[section]
\theoremstyle{plain} \newtheorem{proposition}[theorem]{Proposition}
\theoremstyle{plain} \newtheorem{lemma}[theorem]{Lemma}
\theoremstyle{plain} \newtheorem{sublemma}[theorem]{Sublemma}
\theoremstyle{plain} \newtheorem{corollary}[theorem]{Corollary}
\theoremstyle{plain} \newtheorem{claim}[theorem]{Claim}
\theoremstyle{plain} 
\theoremstyle{plain} 
\theoremstyle{plain} \newtheorem{conjecture}[theorem]{Conjecture}
\theoremstyle{plain} 
\theoremstyle{remark} \newtheorem{example}[theorem]{Example}
\theoremstyle{definition} \newtheorem{remark}[theorem]{Remark}
\theoremstyle{definition} \newtheorem{definition}[theorem]{Definition}
\theoremstyle{definition} \newtheorem{theorem-definition}[theorem]{Theorem-Definition}
\theoremstyle{definition} 
\theoremstyle{plain} \newtheorem{maintheorem}{Theorem}
\theoremstyle{plain} 
\theoremstyle{plain} \newtheorem{mainproposition}[maintheorem]{Proposition}
\theoremstyle{plain} 
\numberwithin{equation}{section}
\newcommand\Aut{\mathrm{Aut}\,}
\newcommand\Out{\mathrm{Out}\,}
\newcommand\Inn{\mathrm{Inn}\,}
\newcommand\GL{\mathrm{GL}}
\newcommand\SL{\mathrm{SL}}
\newcommand\Z{\mbox{$\mathbb{Z}$}}
\newcommand\into\hookrightarrow
\newcommand\isomarrow{\stackrel{\cong}{\longrightarrow}}
\def\co{\colon\thinspace}
\newcommand\adj[2]{\mathrm e( #1,#2)}
\newcommand\AAG{\Aut A_\Gamma}
\newcommand\AAGo{\Aut^0 A_\Gamma}
\newcommand\OAG{\Out A_\Gamma}
\newcommand\supp{\mathrm{supp\,}}
\newcommand\lk{\mathrm{lk}}
\newcommand\st{\mathrm{st}}
\newcommand\pg[1]{\mathrm v(#1)}
\newcommand\aco[3]{\langle #1,#2\rangle_{#3}}
\newcommand\lkl[1]{\lk_L(#1)}
\newcommand\stl[1]{\st_L(#1)}
\newcommand\OLS{\Omega_\ell\cup\Omega_s}
\begin{document}
\maketitle

\begin{abstract}
We generalize the peak-reduction algorithm (Whitehead's theorem) for free groups to a theorem about a general right-angled Artin group $A_\Gamma$. 
As an application, we find a finite presentation for the automorphism group $\mathrm{Aut}\,A_\Gamma$ that generalizes McCool's presentation for the automorphism group of a finite rank free group.
We also give consider a stronger generalization of peak-reduction, giving a counterexample and proving a special case.
\end{abstract}

\section{Introduction}
\subsection{Background}
Let $\Gamma$ be a graph on $n$ vertices, with vertex set $X$ and adjacency relation denoted by $\adj{-}{-}$.
Let $A_\Gamma$ be the \emph{right-angled Artin group of $\Gamma$}, defined by
\[A_\Gamma:= \langle X |R_\Gamma \rangle\]
where the relations are $R_\Gamma=\{[x,y]|\text{ $x,y\in X$ and $\adj{x}{y}$}\}$ (we use the convention that $[x,y]=xyx^{-1}y^{-1}$).
If $\Gamma$ is the edgeless graph ($n$ vertices and no edges), then $A_\Gamma$ is the free group $F_n$ on $n$ generators.
If $\Gamma$ is the complete graph, then $A_\Gamma$ is the free abelian group $\Z^n$.
So in a sense, the group $A_\Gamma$ interpolates between free groups and free abelian groups as we vary $\Gamma$.
Similarly, automorphism group $\AAG$ interpolates between $\Aut F_n$ and the integral general linear group $\GL(n,\Z)$.
In this paper, we develop a framework for understanding $\AAG$ in which ideas from the study of linear groups and ideas from the study of $\Aut F_n$ can both be applied.
We also give a finite presentation for $\AAG$.

Whitehead's 1936 theorem~(\cite{wh}, Theorem 2) is a result about automorphism groups of free groups with important applications; peak reduction is an algorithmic approach used by Rapaport~\cite{rap}, Higgins--Lyndon~\cite{hl} and others to reprove and extend this theorem.
Whitehead's theorem states that there is a finite generating set for $\Aut F_n$ that has a special property concerning factorizations of elements of $\Aut F_n$ and the lengths of elements of $F_n$.
One corollary of this theorem is that for any $k$--tuple $W$ of elements of $F_n$, the stabilizer $(\Aut F_n)_{W}$ is finitely generated; in fact, McCool~\cite{mccool} used peak-reduction methods to prove that $(\Aut F_n)_{W}$ is finitely presented.
Another corollary is that there is an algorithm that determines whether two $k$--tuples of elements of $F_n$ are in the same orbit under the action of $\Aut F_n$ (see Lyndon--Schupp~\cite{ls}, Chapter~I, Proposition~4.19).
Peak-reduction methods were also used by Culler--Vogtmann~\cite{cullervogtmann} to study the structure of the outer space of $F_n$.

We proceed to define the notion of a peak-reduced factorization.
Define the length of a conjugacy class of $A_\Gamma$ to be the minimum of the lengths of its representative elements (with respect to $X$), and define the length of a $k$--tuple of conjugacy classes of $A_\Gamma$ to be the sum of the lengths of its elements (for any $k\geq 1$).
For $W$ a $k$--tuple of conjugacy classes in $A_\Gamma$, we say that a string $\alpha_m\cdots\alpha_1$ of elements of $\Aut A_\Gamma$ is \emph{peak-reduced} with respect to $W$ if for each $i=1,\ldots,m-1$, we do not have both
\begin{align*}
|(\alpha_{i+1}\cdots\alpha_1)\cdot W |\leq|(\alpha_i\cdots\alpha_1)\cdot W|\\
\vspace{2pt}
\tag*{\text{and}}|(\alpha_i\cdots\alpha_1)\cdot W|\leq|(\alpha_{i-1}\cdots\alpha_1)\cdot W|
\end{align*}
unless all three lengths are equal.

Suppose $G<\AAG$, $S$ is a finite generating set for $G$, and $W$ is a $k$--tuple of conjugacy classes in $A_\Gamma$.
We say that $G$ has \emph{peak reduction} with respect to $W$ by elements of $S$ if every $\alpha\in G$ has a factorization by elements of $S$ that is peak-reduced with respect to $W$.
The peak-reduction theorem for a free group $F_n$ states that there is a finite generating set $\Omega$ for $\Aut F_n$ (called the \emph{Whitehead automorphisms}) such that $\Aut F_n$ has peak reduction with respect to any $k$--tuple of conjugacy classes $W$ in $F_n$ by elements of $\Omega$.
See Lyndon and Schupp~\cite{ls}, Chapter~I, Proposition~4.17 for a proof.
Whitehead's theorem is an important special case of this theorem.


The following definition of a Whitehead automorphism in $\AAG$ generalizes the definition of a Whitehead automorphism of a free group (see chapter I.4 of Lyndon and Schupp~\cite{ls}).
\pagebreak[3]
\begin{definition}\label{de:whauts}
A \emph{Whitehead automorphism} is an element $\alpha\in\AAG$ of one of the following two types:
\begin{description}
\item[Type~(1):] $\alpha$ restricted to $X\cup X^{-1}$ is a permutation of $X\cup X^{-1}$, or
\item[Type~(2):] there is an element $a\in X\cup X^{-1}$, called the \emph{multiplier} of $\alpha$, such that for each $x\in X$, the element $\alpha(x)$ is in $\{x,xa,a^{-1}x,a^{-1}xa\}$. 
\end{description}
Let $\Omega$ be the set of all Whitehead automorphisms of $A_\Gamma$.
\end{definition}
For our purposes, it is more natural to single out the following two subsets of $\Omega$:
\begin{definition}
A Whitehead automorphism $\alpha\in\Omega$ is \emph{long-range} if $\alpha$ is of type~(1) or if $\alpha$ is of type~(2) with multiplier $a\in X\cup X^{-1}$ and $\alpha$ fixes the elements of $X$ adjacent to $a$ in $\Gamma$.
Let $\Omega_\ell$ be the set of long-range elements of $\Omega$.

A Whitehead automorphism $\alpha\in\Omega$ is \emph{short-range} if $\alpha$ is of type~(2) with multiplier $a\in X\cup X^{-1}$ and $\alpha$ fixes the elements of $X$ not adjacent to $a$ in $\Gamma$.
Let $\Omega_s$ be the set of short-range elements of $\Omega$.
\end{definition}
It is easy to see that $\Omega$ is finite, and it is a consequence of the work of Laurence~\cite{la} (see Section~\ref{ss:Laurencesgenerators}) that $\OLS$ generates $\AAG$.
The following theorem is the main result of the current paper:
\begin{maintheorem}\label{mt:threeparts}
The finite generating set $\OLS$ for $\AAG$ has the following properties:
\begin{enumerate}
\item \label{it:complements} 
each $\alpha\in\AAG$ can be written as $\alpha=\beta\gamma$ for some $\beta\in \langle\Omega_s\rangle$ and some $\gamma\in\langle\Omega_\ell\rangle$;
\item
\label{it:srinj} 
the usual representation $\AAG\to \Aut H_1(A_\Gamma)$ to the automorphism group of the abelianization $H_1(A_\Gamma)$ of $A_\Gamma$ restricts to an embedding $\langle\Omega_s\rangle\into \Aut H_1(A_\Gamma)$; and
\item
\label{it:lrwhalg} the subgroup $\langle\Omega_\ell\rangle$ has peak reduction by elements of $\Omega_\ell$ with respect to any $k$--tuple $W$ of conjugacy classes in $A_\Gamma$.
\end{enumerate}
\end{maintheorem}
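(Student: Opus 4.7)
The plan is to prove the three conclusions somewhat independently, using Laurence's result that $\OLS$ generates $\AAG$ as a starting point. Part (\ref{it:lrwhalg}) is the heart of the theorem, where essentially all of the technical weight lies; parts (\ref{it:complements}) and (\ref{it:srinj}) rely on relatively elementary structural features of the two classes of Whitehead automorphisms.

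For part (\ref{it:complements}), the approach is to establish commutation-style rewriting rules that let one move all long-range factors to the right of all short-range factors in an $\OLS$-word representing $\alpha$. Specifically, I would prove that for any $\gamma\in\Omega_\ell$ and $\beta\in\Omega_s$, the product $\gamma\beta$ lies in $\langle\Omega_s\rangle\langle\Omega_\ell\rangle$. Since a long-range $\gamma$ with multiplier $a$ fixes the vertices adjacent to $a$, while a short-range $\beta$ with multiplier $b$ only moves vertices adjacent to $b$, the case analysis splits on whether $a$ and $b$ are equal, adjacent, or non-adjacent, and a direct computation should yield the desired factorization in each case. Iterating these rules via a bubble-sort style argument, starting from any $\OLS$-factorization of $\alpha$, produces the decomposition $\alpha=\beta\gamma$.

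Part (\ref{it:srinj}) follows from a structural analysis of $\langle\Omega_s\rangle$. A short-range Whitehead $\beta$ with multiplier $a$ sends each vertex $x$ adjacent to $a$ to one of $x,xa,a^{-1}x,a^{-1}xa$; since $a$ and $x$ commute in $A_\Gamma$, these coincide with $x,xa,xa^{-1},x$ as elements of $A_\Gamma$, and so $\beta$ acts on $H_1(A_\Gamma)$ by a composition of elementary transvections in the direction of $a$. Thus the image of $\langle\Omega_s\rangle$ in $\Aut H_1(A_\Gamma)$ lies in the subgroup of unipotent matrices whose off-diagonal $(i,j)$ entry can be nonzero only when $i$ and $j$ are adjacent in $\Gamma$. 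I would then exhibit a normal form for elements of $\langle\Omega_s\rangle$, using an ordering of $X$ compatible with the partial order on vertices given by link containment, that allows one to read off each element from its matrix of coefficients. Injectivity then follows from uniqueness of this normal form.

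Part (\ref{it:lrwhalg}) requires generalizing the classical Whitehead peak-reduction proof (in the style of Lyndon--Schupp, Chapter~I, Proposition~4.17) from $\Aut F_n$ to the long-range subgroup $\langle\Omega_\ell\rangle$ of $\AAG$. Given $\alpha=\alpha_m\cdots\alpha_1$ with each $\alpha_i\in\Omega_\ell$ and a $k$-tuple $W$ of conjugacy classes, I would examine the length sequence $|(\alpha_i\cdots\alpha_1)\cdot W|$ and seek to eliminate peaks one at a time by induction on some lexicographic complexity of this sequence. The crux is a \emph{peak-lowering lemma}: if $\gamma\delta$ with $\gamma,\delta\in\Omega_\ell$ forms a non-level peak, then $\gamma\delta$ can be replaced by a word in $\Omega_\ell$ whose intermediate $W$-length is strictly less than the peak height. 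To prove this I would adapt Whitehead's graph-theoretic counting argument to the $A_\Gamma$ setting, building a graph whose vertices are letter occurrences in canonical cyclic representatives of the elements of $W$ and whose edges track adjacencies modulo the commutation relations imposed by $\Gamma$, then use the long-range hypothesis (that each $\alpha_i$ fixes the link of its multiplier) to show that the effect of $\alpha_i$ on length is controlled by local data around that multiplier. The main obstacle is precisely this peak-lowering lemma: the commutation relations in $A_\Gamma$ make both cyclic reduction and the notion of adjacency of letters significantly more delicate than in the free case, requiring a careful choice of canonical cyclic representatives together with an argument that the relevant adjacency counts are invariant of those choices, and a substantial case analysis based on the relative positions in $\Gamma$ of the multipliers of $\gamma$ and $\delta$.
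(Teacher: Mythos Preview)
Your plan for parts~(\ref{it:complements}) and~(\ref{it:lrwhalg}) is essentially the paper's approach. For part~(\ref{it:complements}) the paper indeed sets up case-by-case ``sorting substitutions'' for a product $\alpha\beta$ with $\alpha\in\Omega_\ell$, $\beta\in\Omega_s$, splitting on whether the multipliers are equal, adjacent, or non-adjacent, and then iterates. One caution: in the adjacent case the substitution does not simply swap to some $\beta'\alpha'$ with $\beta'\in\Omega_s$ and $\alpha'\in\Omega_\ell$; an extra long-range factor of a controlled shape appears (namely $\ell((A-a+b^{\pm1},b^{\pm1}))$), so the ``bubble sort'' is not a straight exchange and the iteration needs a little more bookkeeping than you indicate (the paper handles this with two short lemmas tracking the form of the long-range factors produced). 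For part~(\ref{it:lrwhalg}) your description of the peak-lowering lemma and adjacency-count argument matches the paper's Lemma~\ref{le:machine} and the machinery of obvious representatives and adjacency counters.

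There is a genuine gap in your approach to part~(\ref{it:srinj}). Your claim that the image of $\langle\Omega_s\rangle$ in $\Aut H_1(A_\Gamma)$ consists of unipotent matrices is false in general. If $a,b\in X$ are adjacent with $a\sim b$ (that is, $\lk(a)\subset\st(b)$ and $\lk(b)\subset\st(a)$), then both $\tau_{a,b}$ and $\tau_{b,a}$ lie in $\Omega_s$, and on homology they generate a full copy of $\SL(2,\Z)$ acting on the span of $a$ and $b$. More generally, adjacent domination is only a preorder, not a partial order, so there is no total ordering of $X$ making all short-range transvections strictly upper-triangular; your proposed normal form cannot exist as described. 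The paper's argument avoids this entirely: it proves the elementary lemma that for $\beta\in\langle\Omega_s\rangle$ and $c\in X$, the support of $\beta(c)$ is a clique contained in $\st(c)$, and then observes that if $\beta_*$ is trivial the letters of $\beta(c)$ all commute and cancel, forcing $\beta(c)=c$. Your approach could be salvaged by decomposing the image as a semidirect product of $\prod_i\SL(|C_i|,\Z)$ (over adjacent domination classes $C_i$) with a unipotent group, but the clique-support argument is both shorter and more robust.
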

\newcommand\partref[1]{part~(\ref{#1}) of Theorem~\ref{mt:threeparts}}

The proof of Theorem~\ref{mt:threeparts} is effective: there is an algorithm that splits an automorphism into its $\langle \Omega_\ell\rangle$ and $\langle \Omega_s\rangle$ parts and an algorithm that peak-reduces an element of $\langle \Omega_\ell\rangle$.
Further, the theorem implies that we can analyze an element of $\langle \Omega_s\rangle$ by using row-reduction methods in $\Aut H_1(A_\Gamma)\cong \GL(n,\Z)$.
Note that if $A_\Gamma$ is a free group, $\Omega_\ell$ is Whitehead's generating set $\Omega$, our $\Omega_s$ contains only the identity, and Theorem~\ref{mt:threeparts} restricts to the peak-reduction theorem.

For most right-angled Artin groups, $\langle \Omega_\ell\rangle$ is a proper subgroup of $\AAG$, and \partref{it:lrwhalg} is seemingly weaker than the peak-reduction theorem for free groups.
In fact, we cannot hope for a straightforward generalization of peak reduction that applies to all of $\AAG$ for arbitrary $\Gamma$.
We show the following in Section~\ref{se:nowhalg}:
\begin{mainproposition}\label{mp:nowhalg}
There is a graph $\Gamma$ such that for every finite generating set $S$ of $\Aut A_\Gamma$, there is a conjugacy class $w$ in $A_\Gamma$ and an automorphism $\alpha\in\Aut A_\Gamma$ such that $\alpha$ cannot be peak-reduced with respect to $w$ by members of $S$.
\end{mainproposition}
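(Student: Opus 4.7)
The plan is to exhibit a single graph $\Gamma$ for which peak reduction fails uniformly, in the sense that no finite generating set $S$ of $\AAG$ suffices to peak-reduce every automorphism against every conjugacy class. Since $\langle\Omega_\ell\rangle$ does admit peak reduction by \partref{it:lrwhalg}, the obstruction must come from the interaction of long-range and short-range moves; I therefore choose a $\Gamma$ whose vertex set supports both nontrivial short-range Whitehead automorphisms at some vertex $a$ and nontrivial long-range moves involving distant vertices, with a rigid link structure that constrains how a cyclic word can be modified by Whitehead moves. A natural candidate is a graph containing ``paired'' non-adjacent vertices that share common neighbors, so that conjugating through the shared neighborhood is forced.

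Given such a $\Gamma$, I would construct an infinite family of pairs $(w_n,\alpha_n)$ indexed by $n\geq 1$, where $w_n$ is a cyclically reduced cyclic word of length growing linearly in $n$ and $\alpha_n$ is a short, fixed-form product of Whitehead automorphisms (likely mixing a short-range move based at $a$ with a long-range move) such that $|\alpha_n\cdot w_n|<|w_n|$. For any finite $S\subseteq\AAG$, each $s\in S$ can alter $|v|$ by at most a constant $M(S)$ on any cyclic word $v$, so for $n$ sufficiently large relative to $M(S)$, any factorization of $\alpha_n$ by elements of $S$ must consist of many small steps. The goal is then to prove that \emph{every} such factorization must pass through at least one partial product whose length strictly exceeds $|w_n|$, producing an unavoidable peak.

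The main obstacle is precisely this lower bound. To establish it, I would introduce an auxiliary $\Z$-valued invariant $\phi$ on conjugacy classes whose net change under $\alpha_n$ is prescribed (and comparable to $n$) but whose increment under any single $s\in S$ is uniformly bounded in a way that is incompatible with traversing from $w_n$ to $\alpha_n\cdot w_n$ without first increasing length. Comparing $\phi$-change against length-change over a putative peak-free factorization would then yield a combinatorial contradiction. Designing $\phi$ and carrying out an explicit case analysis of how short-range versus long-range Whitehead moves interact with $\phi$ and with the cyclic structure of $w_n$ is the main technical work, and is what rules out any ``shortcut'' factorization within $S$.
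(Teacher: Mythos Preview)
Your proposal is not yet a proof: it is a plan whose hardest parts are left entirely unspecified. You have not named a graph $\Gamma$, not given the words $w_n$ or the automorphisms $\alpha_n$, and not defined the invariant $\phi$; you yourself flag that ``designing $\phi$ and carrying out an explicit case analysis \ldots\ is the main technical work.'' Without these choices there is nothing to verify. There is also a structural issue with the outline: you set up $|\alpha_n\cdot w_n|<|w_n|$ and then aim to force every $S$--factorization to climb above $|w_n|$. But ruling out \emph{every} descent path by a bounded-increment invariant is delicate---an invariant that changes a lot over the whole factorization says nothing by itself about the \emph{length} profile along the way. You would need a coupling between $\phi$ and $|\cdot|$ strong enough that large $\phi$-change forces a length increase, and you have not indicated any mechanism for that.

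By contrast, the paper's argument is short and completely explicit. Take $\Gamma$ to be the four-vertex path with endpoints $a,d$ and interior $b,c$. One computes $\Out A_\Gamma\cong P\ltimes N$ with $P$ finite (inversions and the one graph symmetry) and $N\cong\Z^4$ generated by the four transvection classes. For $w=[ad^k]$ one checks directly that the only orbit elements of length $\le|w|$ are the finitely many $P$--translates of $w$, and that the stabilizer $N_w$ is cyclic, generated by an element of word length $k{+}1$ in Laurence's generators. Now, given any finite $S$, choose $k>1+\max_{s\in S}|s|$ and take $\alpha$ representing this stabilizer generator; then $\alpha\cdot w=w$. A peak-reduced $S$--factorization of $\alpha$ would have to keep every partial image at length $|w|$, hence inside $P\cdot w$; but then each factor $\gamma_j$, after adjusting by some $\sigma\in P$, lies in $N_{w'}$ for some $w'\in P\cdot w$, and the length bound $|\gamma_j|<k+1$ forces its $N$--part to be trivial. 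Thus every $\gamma_j$ lies in $P$, contradicting $\alpha\notin P$. The point is that working with an $\alpha$ that \emph{fixes} $w$ (rather than shortens it) turns ``no peaks'' into ``all intermediate lengths equal $|w|$,'' and the explicit orbit/stabilizer computation finishes the job without any auxiliary invariant.
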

In spite of this disappointing fact, there are still special cases where peak-reduction works.
As an example of such a special case, we prove the following:
\begin{mainproposition}\label{mp:specialwhalg}
Let $W=(w_1,\ldots,w_k)$ be a $k$--tuple of conjugacy classes such that for each $i$, $|w_i|=1$.
If $\alpha\in\AAG$ and $|\alpha\cdot W|=|W|$, then $\alpha$ can be peak-reduced with respect to $W$ by elements of $\Omega_\ell\cup\Omega_s$.
\end{mainproposition}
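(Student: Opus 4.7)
The approach is to combine all three parts of Theorem~\ref{mt:threeparts}. By \partref{it:complements}, we can factor $\alpha = \beta\gamma$ with $\beta \in \langle\Omega_s\rangle$ and $\gamma \in \langle\Omega_\ell\rangle$. The plan is to factor $\beta$ and $\gamma$ into short-range and long-range Whitehead automorphisms respectively, arrange a peak-reduced factorization of each half, and verify that the concatenated factorization of $\alpha$ has no peaks across the junction.

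The key technical claim is that the decomposition can be chosen so that $|\gamma\cdot W| = k = |W|$. Once this holds, we apply \partref{it:lrwhalg} to obtain a peak-reduced factorization $\gamma = \gamma_p\cdots\gamma_1$ in $\Omega_\ell$ with respect to $W$; since $W$ and $\gamma\cdot W$ both have length $k$, the resulting length sequence is peak-reduced with both endpoints pinned at the minimum value $k$. For the $\beta$-part, since $\beta$ sends $\gamma\cdot W$ (length $k$, so a $k$-tuple of length-$1$ classes) to $\alpha\cdot W$ (also a $k$-tuple of length-$1$ classes), I would factor $\beta$ in $\Omega_s$ by a sequence staying at length $k$. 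This uses \partref{it:srinj}: $\beta$ is determined by its action on the abelianization, where it appears as a product of elementary matrices in $\GL(n,\Z)$ permuting and stabilizing the basis vectors corresponding to the entries of $\gamma\cdot W$; this abelian analysis provides a factorization in $\Omega_s$ whose effect on each length-$1$ class stays at length $1$ throughout. Concatenating the two halves then yields a peak-reduced factorization of $\alpha$.

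The main obstacle is establishing $|\gamma\cdot W| = k$. The difficulty is that short-range automorphisms can \emph{decrease} length: a short-range automorphism with multiplier $a$ sends $[ya]$ (of length $2$, for $y$ adjacent to $a$ and $y\neq a$) to $[y]$ (of length $1$) when configured to subtract $a$ from $y$. So a naive $\beta$ could mask a length-increase-then-decrease pattern in the product, leading to $|\gamma\cdot W| > k$. The resolution is to start from any decomposition given by \partref{it:complements} and, whenever $|\gamma\cdot W| > k$, identify short-range factors in $\beta$ that cancel length increases introduced by $\gamma$ and use commutation relations in $\AAG$ between short- and long-range Whitehead automorphisms to move those factors out of $\beta$ and combine them with adjacent pieces of $\gamma$, producing a new decomposition with smaller $|\gamma\cdot W|$; iterating terminates when $|\gamma\cdot W| = k$. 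Carrying out this rewriting carefully—verifying that such commutations exist and reduce $|\gamma\cdot W|$—is the central technical step of the proof.
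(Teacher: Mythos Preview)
Your overall architecture---split $\alpha$ into a short-range piece and a long-range piece, peak-reduce each, and concatenate---matches the paper.  But your ``central technical step'' is both unnecessary and not actually carried out, and this is where your proposal has a real gap.

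The paper sidesteps the obstacle entirely by using the decomposition in the \emph{other} order: write $\alpha=\beta\gamma$ with $\gamma\in\langle\Omega_s\rangle$ applied first and $\beta\in\langle\Omega_\ell\rangle$ applied second (this follows from \partref{it:complements} applied to $\alpha^{-1}$).  Now the intermediate tuple is $\gamma\cdot W$ with $\gamma$ short-range.  The key observation is Lemma~\ref{le:shortensrgen}: since $\supp\gamma(w_i)$ is a clique (Lemma~\ref{le:srsupp}), no long-range Whitehead automorphism can shorten $\gamma(w_i)$, because a long-range $(A,a)$ has $(A-a)\cap\lkl{a}=\emptyset$ while everything in $\supp\gamma(w_i)$ is adjacent to $a$.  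So if $\beta=\delta_m\cdots\delta_1$ is peak-reduced with respect to $\gamma\cdot W$, the first step $\delta_1$ cannot decrease length; a peak-reduced length sequence that does not initially decrease is nondecreasing throughout, and since it ends at $|\alpha\cdot W|=k$ and starts at $|\gamma\cdot W|\geq k$, we get $|\gamma\cdot W|=k$ immediately.  No iterative rewriting is needed.

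Your proposed rewriting procedure, by contrast, is only a hope: you would need to show that whenever $|\gamma\cdot W|>k$ there is always a short-range factor of $\beta$ that can be commuted past the long-range word and absorbed into $\gamma$ so as to strictly decrease $|\gamma\cdot W|$, and that this terminates.  The sorting substitutions in the paper (Definition~\ref{de:sort}) move short-range elements \emph{leftward} past long-range elements and can introduce new long-range factors in the process; running this in reverse to push short-range elements rightward is not obviously possible, let alone length-decreasing.  For the short-range half your sketch is closer to the paper: once $|\gamma\cdot W|=k$, each $\gamma(w_i)$ is the class of a letter, a suitable type~(1) automorphism $\sigma\in\langle\Omega_s\rangle$ brings $\gamma\cdot W$ back to $W$, and Lemma~\ref{le:fixgens} factors $\sigma\gamma$ into transvections in $\Omega_s$ that fix $\supp W$ pointwise, hence fix $W$ at every stage.
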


As an application of Theorem~\ref{mt:threeparts}, we give a presentation for $\AAG$.
In Section~\ref{ss:whgens}, we define a finite set $R$ of relations among the Whitehead automorphisms $\Omega$.
These relations tell us when one element of $\Omega$ is the inverse of another, when one is a product of two others, when two elements of $\Omega$ commute, when an element of $\Omega$ is the commutator of two other elements of $\Omega$, and how type (1) Whitehead automorphisms interact with type~(2) Whitehead automorphisms.
These relations are based on the relations McCool gives for the automorphism group of the free group in~\cite{mcpres}.
\begin{maintheorem}\label{mt:pres}
For any graph $\Gamma$, the group $\AAG$ is finitely presented.
Specifically, we have $\AAG=\langle \Omega | R\rangle$.
\end{maintheorem}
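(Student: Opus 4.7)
The plan is to combine the three parts of Theorem~\ref{mt:threeparts} with an adaptation of McCool's presentation argument from~\cite{mcpres}. Let $G=\langle \Omega\mid R\rangle$ and let $\pi\co G\to\AAG$ be the natural surjection, which exists because the relations $R$ hold in $\AAG$. It suffices to show that every relation in $\AAG$ among elements of $\Omega$ is already a consequence of $R$, i.e.\ that $\pi$ is injective.

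First, using \partref{it:srinj} to embed $\langle\Omega_s\rangle$ into $\Aut H_1(A_\Gamma)\cong\GL(n,\Z)$, I would extract a presentation $\langle\Omega_s\mid R_s\rangle$ with $R_s\subseteq R$: the short-range Whitehead automorphisms correspond to elementary transvections under this embedding, and the image admits a Steinberg-style presentation whose relations are among the commutation and composition relations already included in $R$. Second, using \partref{it:lrwhalg}, I would run McCool's argument inside $\langle\Omega_\ell\rangle$ to obtain a presentation $\langle\Omega_\ell\mid R_\ell\rangle$ with $R_\ell\subseteq R$. The idea is that any relation $\omega_m\cdots\omega_1=1$ in $\langle\Omega_\ell\rangle$ fixes a suitably chosen test tuple $W$ of conjugacy classes, giving a closed loop in the orbit graph of $W$ under $\Omega_\ell$-moves; peak reduction homotopes this loop into the subgraph of minimum-length vertices by elementary moves that instantiate the local relations in $R$, and the residual loops at minimum length are killed by the stabilizer relations (permutations of $X\cup X^{-1}$ together with commuting short-range pieces) already present in $R$.

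Finally, I would use \partref{it:complements} to assemble the two pieces. The interaction relations in $R$---describing how type~(1) Whitehead automorphisms conjugate type~(2) ones, and how to rewrite a product $\gamma\beta$ with $\beta\in\langle\Omega_s\rangle$ and $\gamma\in\langle\Omega_\ell\rangle$ as $\beta'\gamma'$ in the opposite order---allow any word in $\Omega$ to be put in $G$ into a normal form $\sigma\lambda$, with $\sigma$ a word in $\Omega_s$ and $\lambda$ a word in $\Omega_\ell$. Combined with the two presentations obtained above, and after pinning down the intersection $\langle\Omega_s\rangle\cap\langle\Omega_\ell\rangle$ (which sits inside the type~(1) permutations), this normal form yields injectivity of $\pi$.

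The main obstacle will be the McCool step for $\langle\Omega_\ell\rangle$: unlike the free-group case, peak reduction here applies only within $\langle\Omega_\ell\rangle$, so the test tuple $W$ must be chosen so that the peak-reducing moves encountered always lie in $\Omega_\ell$ and so that the stabilizer of a minimum-length representative is describable by relations already in $R$. Producing a finite list of local relations rich enough to carry out all the straightening needed to convert a closed loop into a product of $R$-relators---while remaining verifiable from the explicit description of $R$---is the technical heart of the argument.
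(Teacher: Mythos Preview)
Your overall architecture matches the paper's exactly: a Steinberg-style presentation for $\langle\Omega_s\rangle$ via the homology embedding (Proposition~\ref{pr:srwhpres}), a McCool-style presentation for $\langle\Omega_\ell\rangle$ via peak reduction (Proposition~\ref{pr:lrpres}), and assembly via the sorting substitutions, each of which is an instance of a relation in $R$ (Proposition~\ref{pr:pressort}). But there is a concrete misstep in your McCool piece. You say the residual loops at minimum length are killed by ``permutations of $X\cup X^{-1}$ together with commuting short-range pieces''; since the whole argument takes place inside $\langle\Omega_\ell\rangle$, no short-range pieces can appear. The paper's test tuple is the tuple $V_0$ of \emph{all} length-$2$ conjugacy classes, and the key computation (Lemma~\ref{le:len2tuple}) shows that any $(A,a)\in\Omega_\ell$ with $|(A,a)\cdot V_0|\leq|V_0|$ is trivial or a full conjugation $(L-a^{-1},a)$. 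Thus a peak-reduced trivial word in $\Omega_\ell$ is a product of type~(1) automorphisms and \emph{conjugations}. The type~(1) part dies by (R7), but the conjugation part requires the separate observation that $\Inn A_\Gamma$ is itself a right-angled Artin group (Lemma~\ref{le:innisraag}), so that its defining commutator relations are instances of~(R3b). Without this you cannot close out the presentation of $\langle\Omega_\ell\rangle$.

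For the final assembly the paper also does something slightly sharper than computing $\langle\Omega_s\rangle\cap\langle\Omega_\ell\rangle$. Given a trivial word rewritten as $uv$ with $u\in\Phi_\ell$, $v\in\Phi_s$, it uses a \emph{second} test tuple $W_0=X$: by Lemma~\ref{le:shortensrgen} no element of $\Omega_\ell$ can shorten $[v]\cdot W_0$, so peak-reducing $[u]$ with respect to $[v]\cdot W_0$ forces $|[v]\cdot W_0|=|W_0|$, hence $[v]$ is a type~(1) automorphism $\sigma$. Then $u\sigma^{-1}$ and $\sigma v$ are handled by the two subgroup presentations separately. Your plan to ``pin down the intersection'' is on the right track, but this is the mechanism that actually makes it work.
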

Although Bux--Charney--Vogtmann~\cite{bcv} have shown that $\AAG$ is finitely presented when $\Gamma$ is a tree, our result is more explicit and holds for arbitrary $\Gamma$.
The idea behind Theorem~\ref{mt:pres} is that we can use the methods of Theorem~\ref{mt:threeparts} to put any word in $\Omega$ representing the identity in $\AAG$ into a standard form.
We also use the fact that the inner automorphism group of a right-angled Artin group is also a right-angled Artin group.

\subsection{Acknowledgements}
Some of the results of this paper originally appeared in my Ph.D. thesis at the University of Chicago, and some of the research was done under the support of a graduate research fellowship from the National Science Foundation.
I am deeply grateful to Benson Farb, my thesis advisor, for many useful conversations and comments on earlier versions of this work.
I am grateful to Shmuel Weinberger for a conversation that led me to prove Proposition~\ref{mp:nowhalg}, and I am grateful to Adam Piggott for suggesting I use these methods to prove a theorem like Theorem~\ref{mt:pres}.
I am grateful to Karen Vogtmann for conversations about this project and I am grateful to Ruth Charney for conversations, and for helping me find an obscure reference.
I would also like to thank Hanna Bennett, Nathan Broaddus, Thomas Church, Jim Fowler and Benjamin Schmidt for comments on earlier versions of this paper.

\section{Generating sets for right-angled Artin groups}\label{se:background}
\subsection{Combinatorial group theory of $A_\Gamma$}
Let the set of letters $L$ be $X\cup X^{-1}$.
For $x\in L$, let $\pg{x}\in X$, the \emph{vertex of $x$}, be the unique element of $X\cap \{x,x^{-1}\}$. 
We will use $\adj{x}{y}$ as a shorthand for $\adj{\pg{x}}{\pg{y}}$ and we will use $\stl{x}$ and $\lkl{x}$ as notation for $\st(\pg{x})\cup\st(\pg{x})^{-1}$ and $\lk(\pg{x})\cup\lk(\pg{x})^{-1}$ respectively.

As usual, a word in $L$ represents an element in $A_\Gamma$.
A \emph{cyclic word} is a string of elements of $L$ indexed cyclically (or alternatively, an equivalence class of words under cyclic permutation of the indices).
Any two non-cyclic indexings of a cyclic word $w$ represent group elements that are conjugate to each other, so $w$ represents a well-defined conjugacy class. 
If $w$ is a cyclic word, we will use $[w]$ to denote the conjugacy class it represents.
If $w$ is a non-cyclic word, we will sometimes use $[w]$ to denote the cyclic word or conjugacy class it represents.

We will denote the length of a word or cyclic word $w$ by $|w|$.
The length of a group element or conjugacy class is the minimum length of any of its representative words or cyclic words, respectively.
A word or cyclic word $w$ on $L$ is \emph{graphically reduced} if it contains no subsegments of the form $aua^{-1}$, where $a\in L$ and $u$ is a word in $\lkl{a}$.
The \emph{support} $\supp w$ of a word or cyclic word $w$ is the subset of $X$ consisting of all generators that appear (or whose inverses appear) in $w$, and the support $\supp W$ of a $k$-tuple $W=(w_1,\ldots,w_k)$ of conjugacy classes is $\bigcup_{i=1}^k\supp w_i$.

According to Servatius (see~\cite{se}, Section I) any graphically reduced word can be transformed into any other graphically reduced representative of the same element by repeated application of commutation moves (replacing a subsegment $ab$ with $ba$ when $\adj{a}{b}$).
The same is true for cyclic words and conjugacy classes.
Therefore, we take the support $\supp w$ of a group element or conjugacy class to be the support of any graphically reduced representative.
The number of instances of a given generator in a group element or conjugacy class can be defined in the same way.

Servatius's centralizer theorem from~\cite{se}, Section III, finds all the centralizers of elements in $A_\Gamma$.
We restate a special case 
here:
\begin{theorem}[Special case of Servatius's centralizer theorem]\label{th:centralizer}
For $x\in X$, the centralizer of $x$ in $A_\Gamma$ is $\langle\stl{x}\rangle$.
\end{theorem}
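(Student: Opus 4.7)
The plan is to prove both containments of the asserted equality. The inclusion $\langle\stl{x}\rangle\subseteq C_{A_\Gamma}(x)$ is immediate: every letter of $\stl{x}$ equals $x^{\pm1}$ or has vertex adjacent to $x$ in $\Gamma$, hence commutes with $x$ by the defining relations of $A_\Gamma$.

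For the reverse inclusion, I take $g\in C_{A_\Gamma}(x)$, fix a graphically reduced word $w$ representing $g$, and induct on $|w|$. The base case $|w|=0$ is trivial. In the inductive step, I study the length-$(|w|+1)$ words $xw$ and $wx$, which both represent $xg=gx$, and split into two cases according to whether $xw$ is graphically reduced.

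If $xw$ is not graphically reduced, then since $w$ itself is reduced, the failure must involve the leading $x$: concretely, $xw$ contains a subsegment $xux^{-1}$ with $u\in\langle\lkl{x}\rangle$. Using the commutation moves within $w$ that expose this structure, I would rewrite $w=ux^{-1}w''$ with $u\in\langle\lkl{x}\rangle$ and $w''$ graphically reduced of length strictly less than $|w|$. Since $u$ commutes with $x$, a direct computation gives $xg=uw''$ and $gx=ux^{-1}w''x$, so $xg=gx$ simplifies to $xw''=w''x$. Induction applied to $w''$, together with $u,x^{-1}\in\langle\stl{x}\rangle$, then yields $g=ux^{-1}w''\in\langle\stl{x}\rangle$. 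If instead $xw$ is graphically reduced (so $wx$ is too, being of equal length and representing the same element), I invoke Servatius's commutation equivalence for graphically reduced words: $xw$ and $wx$ differ by a sequence of commutation moves. Tracking the distinguished leading $x$ of $xw$, the first commutation move involving it forces it to swap with some letter $a$, which therefore commutes with $x$ and lies in $\stl{x}$; moreover $a$ is the first letter of some graphically reduced representative of $g$. Writing $g=ag'$ reduced with $|g'|=|g|-1$ and using $xa=ax$, the relation $xg=gx$ becomes $xg'=g'x$, and the inductive hypothesis finishes the argument.

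The main obstacle is making the tracking argument in the reduced case rigorous when $w$ itself contains other copies of $x^{\pm1}$, since the "leading $x$" is then not uniquely distinguished by its label. A clean way to handle this is to pass to the heap (trace-monoid) description of elements of $A_\Gamma$: the letters that can appear first in a graphically reduced representative correspond exactly to labels of minimal elements of the heap poset, and the desired commutativity then becomes a clean statement that two minimal elements of the heap of $xg$ (one of label $x$ and one inherited from the front of $w$) must be incomparable and hence carry commuting labels.
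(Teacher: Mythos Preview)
The paper does not give a proof of this statement; it is cited as a special case of Servatius's centralizer theorem and used as a black box. So there is no argument in the paper to compare yours against.

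That said, your proposal is correct, and it is self-contained modulo the one normal-form fact the paper also quotes from Servatius (that any two graphically reduced representatives of the same element differ by commutation moves). Two small comments:

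In Case~1 no commutation moves within $w$ are needed. The paper's definition of ``graphically reduced'' concerns \emph{literal} subsegments, so if $xw$ fails to be graphically reduced while $w$ is, the offending $xux^{-1}$ must be a literal initial segment of $xw$; hence $w$ literally begins with $ux^{-1}$, and your decomposition $w=ux^{-1}w''$ is immediate.

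The ``obstacle'' you raise in Case~2 evaporates once you track the leading $x$ by \emph{position} rather than by label. Given a sequence of commutation moves $xw=u_0\to\cdots\to u_m=wx$, either some step swaps positions $0$ and $1$, or none does. In the first case, the first such swap pairs $x$ with a letter $a$ satisfying $\adj{a}{x}$ by the definition of a commutation move, so $a\in\lkl{x}\subset\stl{x}$; moreover all earlier moves acted on positions $\geq 1$, so the suffix $a\cdots$ at that moment is obtained from $w$ by commutation moves and is a graphically reduced word for $g$ beginning with $a$. In the second case the letter at position $0$ is unchanged throughout, so the first letter of $wx$---that is, the first letter of $w$---equals $x\in\stl{x}$. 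Either way you get $g=ag'$ with $a\in\stl{x}$, and $xa=ax$ turns $xg=gx$ into $xg'=g'x$, so the induction closes. Your heap reformulation encodes exactly this dichotomy (the two minimal pieces either coincide, giving $a=x$, or are incomparable, forcing their labels to commute) and is an equally valid way to package it.
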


\subsection{Laurence's generators for $\AAG$}\label{ss:Laurencesgenerators}
There is a reflexive, transitive, binary relation on $X$ called the \emph{domination relation}: say $x\geq y$ ($x$ dominates $y$) if $\lk(y)\subset \st(x)$.
Domination is clearly reflexive and transitive.
For $x, y\in L$, say $x\geq y$ if $\pg{x}\geq \pg{y}$.
Write $x\sim y$ when $x\geq y$ and $y\geq x$; the relation $\sim$ is called the \emph{domination equivalence} relation.
We will also consider the \emph{adjacent domination} relation, which holds for $x$ and $y$ if $\adj{x}{y}$ and $x\leq y$, and the \emph{non-adjacent domination} relation, which holds if $x\leq y$ and not $\adj{x}{y}$.
Each of these relations has a corresponding equivalence relation.
We say that $x$ strictly dominates $y$ if $x\geq y$ and $x\not\sim y$ (other authors have used ``strict domination" to refer to what we mean by ``non-adjacent domination").

The following classes of automorphisms were defined by Servatius in~\cite{se}, where he conjectured that they generate $\AAG$.
\begin{definition}
The Laurence--Servatius generators are the following four classes of automorphisms:

{\bf Dominated Transvections:}
For $x, y\in L$ with $x\geq y$ and $\pg{x}\neq \pg{y}$, the \emph{dominated transvection} (or simply \emph{transvection}) $\tau_{x,y}$ is the automorphism that sends
\[y \mapsto yx\] 
and fixes all generators not equal to $\pg{y}$.

{\bf Partial Conjugations:}
For $x\in L$ and $Y$ a union of connected components of $\Gamma - \st(\pg{x})$, the \emph{partial conjugation} $c_{x,Y}$ is the automorphism that sends
\[y \mapsto x^{-1}yx\quad\mbox{for $y\in Y$}\]
and fixes all generators not in $Y$. 

{\bf Inversions:}
For $x\in X$, the \emph{inversion} of $x$ is the automorphism that sends
\[x \mapsto x^{-1}\]
and fixes all other generators.

{\bf Graphic Automorphisms:}
For $\pi$ an automorphism of the graph $\Gamma$, the \emph{graphic automorphism} of $\pi$ is the automorphism that sends
\[x \mapsto \pi(x)\]
for each generator $x\in X$.
\end{definition}
It is a potential point of confusion that an ordinary conjugation automorphism is an example of a partial conjugation automorphism.

The following is Theorem~6.9 of Laurence~\cite{la}.
\begin{theorem}[Laurence]\label{th:lau}
The group $\AAG$ is generated by the finite set consisting of all dominated transvections, partial conjugations, inversions and graphic automorphisms of $A_\Gamma$.
\end{theorem}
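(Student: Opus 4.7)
The plan is to imitate Whitehead's peak-reduction argument in the free group setting, applied to the $n$-tuple $V = ([x_1],\ldots,[x_n])$ of conjugacy classes of the vertex generators of $A_\Gamma$. Define $|\alpha\cdot V| = \sum_{i=1}^n |[\alpha(x_i)]|$ for $\alpha\in\AAG$. I would induct on $|\alpha\cdot V|$, with the minimum value $n$ as the base case and a length-reducing move built from Laurence--Servatius generators supplying the inductive step.

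For the base case, suppose $|\alpha\cdot V| = n$, so each $[\alpha(x_i)]$ has length one, meaning $\alpha(x_i)$ is conjugate to some letter. First I would show that by post-composing $\alpha$ with a finite sequence of partial conjugations one can arrange that $\alpha(x_i)\in L$ for every $i$. The resulting automorphism permutes $L$, preserves inverses, and preserves the edge relation $\adj{-}{-}$ (because it must carry the defining relations $R_\Gamma$ to themselves), so it decomposes as a product of inversions with a graphic automorphism.

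For the inductive step, suppose $|\alpha\cdot V|>n$. I would select a letter $a\in L$ whose vertex occurs in some $\alpha(x_i)$ and attempt to build an automorphism $\sigma$ of type~(2) with multiplier $a$, whose effect on the cyclic words $[\alpha(x_i)]$ strictly decreases total length. The recipe for $\sigma$ comes from a generalized Whitehead graph: vertices are letters appearing in $\bigcup_i\supp \alpha(x_i)$ together with two copies of $a^{\pm 1}$, with edges recording adjacencies in the cyclic words \emph{after quotienting by commutations through $\lkl{a}$}. A minimum cut in this graph separating $a$ from $a^{-1}$ yields the set of letters on which $\sigma$ acts non-trivially. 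One then verifies by a direct cancellation count that $|\sigma\alpha\cdot V| < |\alpha\cdot V|$ unless the Whitehead graph has no separating cut, in which case a different choice of $a$ or a preliminary graphic automorphism handles the situation.

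The main obstacle is the final verification that such a $\sigma$ lies in the subgroup generated by Laurence--Servatius generators. The key structural input is that each letter $y$ in the support of $\sigma$ must satisfy one of two conditions: either $\pg{y}\in\lk(\pg{a})$, in which case the corresponding factor of $\sigma$ is supplied by a partial conjugation $c_{a,Y}$; or $\pg{y}\notin\st(\pg{a})$, in which case one must show $a\geq y$, so that $\tau_{a,y}$ is an available dominated transvection. Proving the domination $a\geq y$ in the second case is the technical heart of the argument: it follows from a careful analysis showing that any letter $z\in\lkl{y}$ must also commute with $a$, by tracking the occurrences of $a,y,z$ inside the $\alpha(x_i)$ and exploiting the fact that $\alpha$ preserves the full set of commutation relations of $A_\Gamma$.
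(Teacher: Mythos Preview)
First, note that the paper does not prove this theorem: it is quoted as Theorem~6.9 of Laurence~\cite{la} and used as a black box throughout. Laurence's original argument is not a Whitehead-style peak reduction; it is a substantially different and rather involved analysis of how an arbitrary automorphism acts on the generators.

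Your proposal has a genuine gap at the crux of the inductive step. In a free group, every symbol $(A,a)$ defines an automorphism, so a min-cut in the Whitehead graph automatically yields a legitimate length-reducing move. In a right-angled Artin group this fails: by Lemma~\ref{le:whdef}, $(A,a)$ is a well-defined automorphism of $A_\Gamma$ only when every one-sided letter $x\in A-A^{-1}$ satisfies $a\geq x$, and the two-sided letters outside $\st(\pg{a})$ fill out entire components of $\Gamma-\st(\pg{a})$. These are structural constraints on the graph $\Gamma$, independent of the particular tuple $\alpha\cdot V$. The min-cut you extract from the Whitehead graph of the words $\alpha(x_i)$ is governed solely by those specific words and has no mechanism forcing it to respect either domination or the component structure of $\Gamma-\st(\pg{a})$. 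Your proposed rescue---deducing $a\geq y$ by ``tracking the occurrences of $a,y,z$ inside the $\alpha(x_i)$''---cannot work in general, because a vertex $z\in\lk(\pg{y})\setminus\st(\pg{a})$ witnessing $a\not\geq y$ need not appear in any $\alpha(x_i)$ at all; domination is a property of $\Gamma$, not of the words at hand. Proposition~\ref{mp:nowhalg} of the paper, which exhibits a $\Gamma$ for which no finite generating set of $\AAG$ admits peak reduction on all conjugacy classes, is a strong signal that the free-group argument does not transplant directly.

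There is also a confusion in your case split: if $\pg{y}\in\lk(\pg{a})$, the relevant Laurence--Servatius generator is the adjacent transvection $\tau_{a,y}$ (which still requires $a\geq y$), not a partial conjugation; partial conjugations act on unions of components of $\Gamma-\st(\pg{a})$, whose vertices are by definition \emph{not} in $\lk(\pg{a})$. So both branches of your dichotomy require exactly the structural condition you have not established.
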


%
%

\subsection{Whitehead automorphisms for right-angled Artin groups}\label{ss:whgens}
We start with some comments on the Whitehead automorphisms $\Omega$ defined the introduction.

There is a special notation for type~(2) Whitehead automorphisms.
Let $A\subset L$ and $a\in L$, such that $a\in A$ and $a^{-1}\notin A$.
If it exists, the symbol $(A,a)$ denotes the Whitehead automorphism satisfying
\[(A,a)(a)=a\]
and for $x\in X-\pg{a}$:
\[(A,a)(x)=\left\{\begin{array}{ll}
x & \mbox{ if $x\notin A$ and $x^{-1}\notin A$}\\
xa & \mbox{ if $x\in A$ and $x^{-1}\notin A$}\\
a^{-1}x & \mbox{ if $x\notin A$ and $x^{-1}\in A$}\\
a^{-1}xa & \mbox{ if $x\in A$ and $x^{-1}\in A$}
\end{array}\right.\]
Say that $(A,a)$ is \emph{well defined} if the formula given above defines an automorphism of $A_\Gamma$.
For $\alpha\in\Omega$ of type~(2), one can always find a multiplier $a\in L$ and a subset $A\subset L$ such that $\alpha=(A,a)$.
There is a little ambiguity in choosing such a representation that comes from the following fact: if $a,b\in L$ with $\adj{a}{b}$, then $(\{a,b,b^{-1}\},a)$ is the trivial automorphism.

Note that the set of type~(1) Whitehead automorphisms is the finite subgroup of $\AAG$ generated by the graphic automorphisms and inversions.

\begin{claim}
The set $\Omega$ of Whitehead automorphisms is a finite generating set for $\AAG$.
\end{claim}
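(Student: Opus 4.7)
The plan is to verify the two pieces of the statement separately: finiteness of $\Omega$, and the fact that $\Omega$ generates $\AAG$.

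For finiteness, I would simply count. Type~(1) Whitehead automorphisms, being determined by their restriction to the finite set $X\cup X^{-1}$, form a subgroup of the (finite) permutation group of $X\cup X^{-1}$, and so there are only finitely many. A Type~(2) Whitehead automorphism is determined by a multiplier $a\in L$ together with a subset $A\subset L$; since $|L|=2n$, there are at most $2n\cdot 2^{2n}$ such data, and only those yielding honest automorphisms give elements of $\Omega$. Hence $\Omega$ is finite.

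For generation, I would exploit Laurence's theorem (Theorem~\ref{th:lau}): it suffices to exhibit each Laurence--Servatius generator as a member of $\Omega$. Inversions and graphic automorphisms act by permutations of $X$ and of $X\cup X^{-1}$, so they are Type~(1). For a dominated transvection $\tau_{x,y}$, I would write it as the Type~(2) automorphism $(\{y,x\},x)$: the multiplier is $x$, only $y$ (and not $y^{-1}$) lies in $A$, so by the case analysis in the definition of $(A,a)$ we get $y\mapsto yx$ and all other generators fixed, matching $\tau_{x,y}$. For a partial conjugation $c_{x,Y}$, I would take $A=\{x\}\cup\{z,z^{-1}:z\in Y\}$ with multiplier $x$: for each $z\in Y$ both $z$ and $z^{-1}$ lie in $A$, so $z\mapsto x^{-1}zx$, while all generators outside $Y\cup\{\pg{x}\}$ are fixed, again matching $c_{x,Y}$. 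Each of these $(A,a)$ is well-defined as an automorphism precisely because $\tau_{x,y}$ and $c_{x,Y}$ are automorphisms.

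Combining the two paragraphs, every Laurence--Servatius generator lies in $\Omega$, so by Theorem~\ref{th:lau}, $\Omega$ generates $\AAG$; together with finiteness this proves the claim. The only step requiring genuine care is matching the case analysis in the definition of $(A,a)$ to the formulas for $\tau_{x,y}$ and $c_{x,Y}$ — particularly keeping straight when to include $a$ itself and when to include both $z$ and $z^{-1}$ in $A$ — but this is a direct check rather than a real obstacle.
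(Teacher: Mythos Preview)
Your proof is correct and follows the same approach as the paper: finiteness by counting subsets and permutations of $L$, and generation by observing that every Laurence--Servatius generator is itself a Whitehead automorphism (so Theorem~\ref{th:lau} applies). You have simply spelled out explicitly the containment that the paper asserts in one line.
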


\begin{proof}
The set $\Omega$ contains the Laurence-Servatius generators from Theorem~\ref{th:lau}.
There are only finitely many permutations of $L$ and finitely many subsets of $L$, so $\Omega$ is finite.
\end{proof}

\begin{lemma}\label{le:whdef}
For $A\subset L$ with $a\in A$ and $a^{-1}\notin A$, the automorphism $(A,a)$ is well defined if and only if both of the following hold:
\begin{enumerate}
\item The set $X\cap A\cap A^{-1}-\lk(\pg{a})$ is a union of connected components of $\Gamma - \st(\pg{a})$.
\item For each $x\in (A-A^{-1})$, we have $a\geq x$. 
\end{enumerate}
Alternatively, $(A,a)$ is well defined if and only if for each $x\in A-\stl{a}$ with $a\not\geq x$, $(A,a)$ acts on the entire component of $\pg{x}$ in $\Gamma-\st(\pg{a})$ by conjugation.
\end{lemma}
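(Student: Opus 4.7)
The plan is to prove both directions by analyzing when the formula defining $(A,a)$ preserves the commutation relations $[x,y]=1$ of $R_\Gamma$. I would partition $X - \pg{a}$ into four ``types'' by the membership of $x$ and $x^{-1}$ in $A$: type F ($x \notin A \cup A^{-1}$, fixed by $(A,a)$), type $+$ ($x \in A, x^{-1} \notin A$, sent to $xa$), type $-$ ($x \notin A, x^{-1} \in A$, sent to $a^{-1}x$), and type $\pm$ ($x, x^{-1} \in A$, sent to $a^{-1}xa$). Relations involving $\pg{a}$ are automatic because $\pg{a}$ is fixed by $(A,a)$ and commutes with $a$, so one need only check adjacent pairs $x, y \in X - \pg{a}$, reducing to a $4 \times 4$ table of type combinations (further reducible by symmetry).

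For the $(\Leftarrow)$ direction, assuming conditions (1) and (2), I would build $(A,a)$ as a composition of Laurence--Servatius generators. Let $Y := (X \cap A \cap A^{-1}) - \lk(\pg{a})$; by condition (1), $Y$ is a union of connected components of $\Gamma - \st(\pg{a})$, so the partial conjugation $c_{a,Y}$ is defined. By condition (2), $a \geq x$ for each $x \in A - A^{-1}$ with $\pg{x} \neq \pg{a}$, so each transvection $\tau_{a,x}$ (sending $x \mapsto xa$) is defined; these transvections pairwise commute because they have distinct supports and each fixes $a$. A generator-by-generator check then shows $T \circ c_{a,Y} = (A,a)$, where $T = \prod_{x \in A - A^{-1}} \tau_{a,x}$, so $(A,a)$ is an automorphism by Theorem~\ref{th:lau}.

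For the $(\Rightarrow)$ direction, I argue contrapositively. If (2) fails, pick $x \in A - A^{-1}$ and $y \in \lk x - \st(\pg{a})$, so $[x,y]=1$ but $[y,a]\neq 1$. In each subcase for the type of $y$, the commutator $[(A,a)(x),(A,a)(y)]$ simplifies, using $[x,y]=1$ and basic commutator identities, to an expression of the form $[y, a^{\pm 1}]$ (up to conjugation by a generator). For instance, if $y$ is type $\pm$ then a direct calculation gives $[xa, a^{-1}ya] = y a^{-1} y^{-1} a = [y, a^{-1}]$. The elementary fact that $[xa, z] = 1 \Leftrightarrow [a, z] = 1$ when $[x,z]=1$, together with Theorem~\ref{th:centralizer}, shows each such commutator is nontrivial precisely because $y \notin \stl{a}$, contradicting preservation of the relation. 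If (1) fails, one picks a path in $\Gamma - \st(\pg{a})$ that starts in $X \cap A \cap A^{-1}$ and exits it, obtaining an adjacent pair $y_i$ (type $\pm$), $y_{i+1}$ (not type $\pm$), both outside $\stl{a}$; a parallel computation yields a nontrivial commutator.

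The alternative characterization follows by reorganizing (1) and (2): condition (2) forces any $x \in A - \stl{a}$ with $a \not\geq x$ to be type $\pm$ (type $+$ or $-$ elements are ruled out by $a \geq x$), and condition (1) then forces the entire component of $\pg{x}$ in $\Gamma - \st(\pg{a})$ to consist of type-$\pm$ vertices, equivalently to be acted on by conjugation by $a$ under $(A,a)$. The main obstacle is the case analysis in the $(\Rightarrow)$ direction, but the uniform reduction of every offending commutator to the form $[y, a^{\pm 1}]$, recognized as nontrivial by Theorem~\ref{th:centralizer} exactly when $y \notin \stl{a}$, keeps the bookkeeping under control.
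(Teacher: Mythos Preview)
Your proposal is correct and is essentially the paper's own proof, just with the details filled in: for sufficiency you exhibit $(A,a)$ explicitly as a composite $T\circ c_{a,Y}$ of a partial conjugation and commuting transvections (the paper merely says ``write $(A,a)$ as a product of the Laurence--Servatius generators''), and for necessity you locate an adjacent pair whose relation is violated and invoke Theorem~\ref{th:centralizer} (again exactly what the paper sketches in one sentence). One small overstatement: the claim that every offending commutator reduces to $[y,a^{\pm1}]$ up to conjugation by a single generator is not literally true in every subcase---for example, if $v(x)$ and $y$ are both type~$+$ and neither lies in $\stl{a}$, then $[xa,ya]$ is a graphically reduced word of length six rather than a conjugate of a length-four commutator---but nontriviality still follows easily (the support contains $\pg a$), so this does not affect the argument.
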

\begin{proof}
The alternate statement follows immediately from the first one.
For the ``only if" direction of the first statement, note that if both conditions hold, one can write $(A,a)$ as a product of the Laurence-Servatius generators.
For the other direction, assume either condition fails and $(A,a)$ defines an automorphism.
One can then find elements $x,y\in X$ such that $[x,y]=1$, but $[(A,a)(x),(A,a)(y)]\neq 1$ by Theorem~\ref{th:centralizer}.
This is a contradiction.
\end{proof}

\subsection{Relations among Whitehead automorphisms}
In this section we define the set of relations $R$ in Theorem~\ref{mt:pres}.
Note that we use function composition order and automorphisms act on the left.
With sets, we use the notation $A+B$ for $A\cup B$ when $A\cap B=\emptyset$.
Note the shorthands $A-a$ for $A-\{a\}$ and $A+a$ for $A+\{a\}$. 

Let $\Phi$ be the free group generated by the set $\Omega$.
We understand the relation ``$w_1=w_2$" to correspond to $w_1w_2^{-1}\in\Phi$.
Note that if $(A,a)\in\Omega$ with $B\subset \lk(\pg{a})$ and $(B\cup B^{-1})\cap A=\emptyset$, then $(A,a)$ and $(A+B+B^{-1},a)$ represent the same element of $\Omega$ and therefore the same element of $\Phi$.
This is why we do not list ``$(A,a)=(A+B+B^{-1},a)$" in the relations below.
 
\begin{definition}\label{de:identities}
The relations of type (R1) are
\begin{equation}\label{eq:R1}\tag{R1}
(A,a)^{-1}=(A-a+a^{-1},a^{-1})
\end{equation}
for $(A,a)\in \Omega$.

The relations of type (R2) are
\begin{equation}\label{eq:R2}\tag{R2}
(A,a)(B,a)=(A\cup B,a)
\end{equation}
for $(A,a)$ and $(B,a)\in\Omega$ with $A\cap B=\{a\}$.

The relations of type (R3) are
\begin{equation}\label{eq:R3}\tag{R3}
(B,b)(A,a)(B,b)^{-1}=(A,a)
\end{equation}
for $(A,a)$ and $(B,b)\in\Omega$ such that $a\notin B$, $b\notin A$, $a^{-1}\notin B$, $b^{-1}\notin A$, and at least one of (a) $A\cap B=\emptyset$ or (b) $b\in\lkl{a}$ holds.
We refer to this relation as (R3a) if condition (a) holds and (R3b) if condition (b) holds.

The relations of type (R4) are
\begin{equation}\label{eq:R4}\tag{R4}
(B,b)(A,a)(B,b)^{-1}=(A,a)(B-b+a,a)
\end{equation}
for $(A,a)$ and $(B,b)\in\Omega$ such that $a\notin B$, $b\notin A$, $a^{-1}\notin B$, $b^{-1}\in A$, and at least one of (a) $A\cap B=\emptyset$ or (b) $b\in\lkl{a}$ holds.
We refer to this relation as (R4a) if condition (a) holds and (R4b) if condition (b) holds.

The relations of type (R5) are
\begin{equation}\label{eq:R5}\tag{R5}
(A-a+a^{-1},b)(A,a)=(A-b+b^{-1},a)\sigma_{a,b}
\end{equation}
for $(A,a)\in \Omega$ and $b\in A$ with $b^{-1}\notin A$, $b\neq a$, and $b\sim a$, where $\sigma_{a,b}$ is the type~(1) Whitehead automorphism with $\sigma_{a,b}(a)=b^{-1}$, $\sigma_{a,b}(b)= a$ and which fixes the other generators.

The relations of type (R6) are
\begin{equation}\label{eq:R6}\tag{R6}
\sigma(A,a)\sigma^{-1}=(\sigma(A),\sigma(a))
\end{equation}
for $(A,a)\in\Omega$ of type~(2) and $\sigma\in\Omega$ of type~(1).

The relations of type (R7) are the entire multiplication table of the type~(1) Whitehead automorphisms, which form a finite subgroup of $\AAG$.

The relations of type (R8) are
\begin{equation}\label{eq:R8}\tag{R8}
(A,a)=(L-a^{-1},a)(L-A,a^{-1})
\end{equation}
for $(A,a)\in \Omega$. 

The relations of type (R9) are
\begin{equation}\label{eq:R9}\tag{R9}
(A,a)(L-b^{-1},b)(A,a)^{-1}=(L-b^{-1},b)
\end{equation}
for $(A,a)\in\Omega$ and $b\in L$ with $b,b^{-1}\notin A$.

The relations of type (R10) are
\begin{equation}\label{eq:R10}\tag{R10}
(A,a)(L-b^{-1},b)(A,a)^{-1}=(L-a^{-1},a)(L-b^{-1},b)
\end{equation}
for $(A,a)\in\Omega$ and $b\in L$ with $b\in A$ and $b^{-1}\notin A$.

Let $R$ be the set of elements of $\Phi$ corresponding to all relations of the forms (R1), (R2), (R3a), (R3b), (R4a), (R4b), (R5), (R6), (R7), (R9) and (R10).
\end{definition}
This is the same $R$ as in Theorem~\ref{mt:pres}, so we will show in Section~\ref{se:pres} that $\AAG=\langle \Omega | R\rangle$.
Note that $R$ is a finite set.

Relations (R1), (R2), (R3a), (R4a), and (R5)-(R10) appear for the automorphism group of the free group in McCool~\cite{mcpres} (McCool uses reverse composition order for his statements).
We have renamed McCool's (R3) as (R3a) and (R4) as (R4a).
Note that in Lyndon--Schupp~\cite{ls}, these relations also appear, but (R7) is unnamed, and (R8)-(R10) are relabeled as (R7)-(R9).

The relations (R3b) and (R4b) are new here.
Our statement of (R4a) varies from McCool's by an application of (R2); this allows us to give (R4b) as a relation of the same form.
Our statement of (R10) varies from McCool's by applications of (R1) and (R2), and our statement of (R9) varies from McCool's by an application of (R1).
This restatement should make it easier to see what relations (R9) and (R10) do.

\begin{remark} 
For $a\in L$, the automorphism $(L-a^{-1},a)$ is the inner automorphism given by conjugating by $a$.
Relation (R8) states that $(A,a)$ and $(L-A,a^{-1})$ represent the same element of $\OAG$.
Relations (R9) and (R10) are cases of the following familiar fact about groups: if for $g$ in a group $G$, $C_g$ denotes conjugation by $g$, then for any $\phi\in\Aut G$, we have $\phi C_g\phi^{-1}=C_{\phi(g)}$.

In the case of the free group, Relations~(R8), (R9), and (R10) follow from relations (R1)-(R7).
However, in the case of a general right-angled Artin group, this is only true of Relation~(R8), which follows immediately from Relations~(R1) and (R2).
This is why we leave relations of type (R8) out of $R$.
However, we leave Relation~(R8) in the list of relations for convenience and to keep with McCool's numbering system.
Relations~(R9) and~(R10) follow from the other relations only if conjugation automorphisms can be factored into products of dominated transvections; this is not possible for general $A_\Gamma$. 
\end{remark}

\begin{proposition}\label{pr:identities}
For each relation $w$ in any of the classes of relations (R1)-(R10), all the symbols appearing in $w$ denote well-defined Whitehead automorphisms.
Furthermore, these relations are true identities in $\AAG$.
\end{proposition}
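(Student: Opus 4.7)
The plan is to verify both claims relation by relation, separating well-definedness from the identity claim.

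For the well-definedness, the verification uses Lemma~\ref{le:whdef}: for each symbol $(A,a)$ appearing on either side of a relation, one checks that $X \cap A \cap A^{-1} - \lk(\pg{a})$ is a union of components of $\Gamma - \st(\pg{a})$ and that $a \geq x$ for every $x \in A - A^{-1}$. Many cases are immediate: (R1) simply swaps $a$ with $a^{-1}$, leaving $\pg{a}$ and the two conditions of the lemma unchanged; (R6) and (R7) concern only type~(1) automorphisms and their conjugation action on type~(2) automorphisms, which permute $L$ and respect the graph structure; and (R9) introduces only the symbol $(L - b^{-1}, b)$, which is visibly well-defined. For (R2), the union $A \cup B$ inherits the components condition (both $A$ and $B$ already pick out unions of components of the same graph $\Gamma - \st(\pg{a})$) and the domination condition with the common multiplier $a$. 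The more delicate cases are (R4), (R5), (R8), and (R10): in (R4), the new symbol $(B - b + a, a)$ requires case analysis on whether $A \cap B = \emptyset$ or $b \in \lkl{a}$, to see that adding $a$ and removing $b$ respects the components condition with multiplier $a$; in (R5), the hypothesis $a \sim b$ is what allows the multiplier to be changed between $a$ and $b$, as $a$ and $b$ have identical domination-below sets and compatible star/link structures.

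For the identity claims, the approach is direct computation on each $x \in X$. Since a type~(2) Whitehead automorphism is specified by the four-case formula according to whether $x$ and $x^{-1}$ lie in $A$, one partitions $X$ by the relevant membership in $A$ (and $B$, when two subsets are present) and computes the image under both sides. Relations (R1) and (R2) reduce immediately; relations (R3) and (R4) are checked case by case, with the disjointness or link hypotheses ensuring the cases decouple; (R5) uses $a \sim b$ to reconcile the two multiplier descriptions; (R6) is the definition of graph-automorphism conjugation; (R7) is the multiplication table by fiat; and (R8) follows from (R1) and (R2) applied to the decomposition $L = A \cup (L - A)$. For (R9) and (R10), the cleanest approach is to use that $(L - b^{-1}, b)$ is the inner automorphism $C_b$ given by conjugation by $b$, combined with the general identity $\phi C_g \phi^{-1} = C_{\phi(g)}$: relation (R9) follows because the hypotheses force $(A,a)(b) = b$, while (R10) follows because $(A,a)(b) = ba$ and $C_{ba} = C_a C_b$, which the remark after Definition~\ref{de:identities} identifies with $(L - a^{-1}, a)(L - b^{-1}, b)$.

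The main obstacle is the bookkeeping in (R4) and (R5). In (R4), after replacing $b$ by $a$ in $B$, one must check that the components condition still holds with multiplier $a$, and the two alternatives in the hypothesis require distinct arguments: in case (a) the disjointness $A \cap B = \emptyset$ controls the swap directly, while in case (b) one exploits $b \in \lkl{a}$ to see that $\pg{b}$ lies in $\st(\pg{a})$ so that removing $b$ leaves the component structure untouched. In (R5), one must verify that the combined effect of $(A - a + a^{-1}, b)(A, a)$ really equals $(A - b + b^{-1}, a)\sigma_{a,b}$ on each generator; this requires carefully using $a \sim b$ to align the domination data and to show that the permutation $\sigma_{a,b}$ absorbs exactly the discrepancy between the two descriptions.
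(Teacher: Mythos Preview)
Your approach matches the paper's: Lemma~\ref{le:whdef} for well-definedness, direct computation on $X$ for the identities, and the inner-automorphism identity $\phi C_g\phi^{-1}=C_{\phi(g)}$ for (R9) and (R10). One correction for (R4): the case split on (a) versus (b) is unnecessary for well-definedness of $(B-b+a,a)$, and ``disjointness controls the swap directly'' is not the right reason. The hypotheses $b^{-1}\in A$ and $b\notin A$ already force $a\geq b$ by Lemma~\ref{le:whdef} applied to the well-defined $(A,a)$; this single domination relation then gives both conditions for $(B-b+a,a)$: transitivity yields $a\geq x$ whenever $b\geq x$, and $\lk(\pg{b})\subset\st(\pg{a})$ shows that every component of $\Gamma-\st(\pg{b})$ is a union of components of $\Gamma-\st(\pg{a})$ together with vertices of $\st(\pg{a})$. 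The alternative (a)/(b) is relevant only for verifying the identity, not for well-definedness. For (R5) you should also record that $\sigma_{a,b}$ is itself a legitimate type~(1) Whitehead automorphism: $a\sim b$ with $\pg{a}\neq\pg{b}$ yields a graph automorphism of $\Gamma$ swapping $\pg{a}$ and $\pg{b}$ and fixing the rest, and $\sigma_{a,b}$ is this graphic automorphism composed with an inversion.
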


\begin{proof}
If $w$ is a relation of type~(R3) or (R7), then it is vacuously true that all the terms appearing in $w$ are well defined (since the instances of these relations are indexed over well-defined terms).

If $w$ is a relation of type~(R1), (R2), (R5) or (R6), then type~(2) Whitehead automorphisms in $w$ are clearly well defined by Lemma~\ref{le:whdef}.

If $w$ is a relation of type~(R4), then since $b\notin A$ but $b^{-1}\in A$, we know $a\geq b$ (by Lemma~\ref{le:whdef}).
Since $a\geq b$, every component of $\Gamma-\st(\pg{b})$ is a union of components of $\Gamma-\st(\pg{a})$ and elements of $\st(a)$.
Then by Lemma~\ref{le:whdef}, $(B-b+a,a)$ is well defined.

If $w$ is a relation of type~(R5), then we have $a\sim b$ with $a\neq b$, which implies that there is an automorphism $\pi$ of $\Gamma$ switching $\pg{a}$ and $\pg{b}$ but fixing the other vertices.
Then $\sigma_{a,b}$ is the composition of the automorphism of $A_\Gamma$ induced from $\pi$ with the inversion of $b$.

If $w$ is a relation of type~(R8), then $(L-A,a^{-1})$ is well defined by Lemma~\ref{le:whdef}.
Since for any $c\in L$, $(L-c^{-1},c)$ represents conjugation by $c$, we know the terms in $w$ are well defined if $w$ is a relation of type~(R8), (R9), or (R10).

Each identity can then be verified by computing actions on $X$.
\end{proof}

\begin{remark}
At this point it is easy to see that $\Omega=\Omega^{-1}$.
This is because of Equation~(\ref{eq:R1}) and the fact that the set of type~(1) Whitehead automorphisms is closed under taking inverses.
\end{remark}




\section{The structure of $\AAG$}
\subsection{Sorting automorphisms by their scope}\label{ss:sorting}
Using the special notation for type (2) Whitehead automorphisms, we can restate the definitions of $\Omega_s$ and $\Omega_\ell$ more succinctly.
A Whitehead automorphism $\alpha$ is in $\Omega_\ell$ if it is of type~(1), or if $\alpha$ is of type (2) and we can write $\alpha=(A,a)$ for some $A$ with $A\cap\lkl{a}=\emptyset$.
A Whitehead automorphism $\alpha$ is in $\Omega_s$ if $\alpha=(A,a)$ is of type (2) and $A\subset\stl{a}$.

Whenever we declare an element $(A,a)\in\Omega_\ell$, we will assume that $A\cap\lkl{a}=\emptyset$.
This is necessary since if $x\in\lkl{a}$, then $(A\cup\{x,x^{-1}\},a)=(A,a)$.

The goal of this subsection is to prove \partref{it:complements}.
We proceed by describing a series of identities that allow us to rewrite a product of a long-range and a short-range automorphism.
We will then show that by a finite number of applications of these identities, we can express any automorphism as a product of a single element of $\langle \Omega_s\rangle$ and a single element of $\langle \Omega_\ell\rangle$.

\begin{lemma}\label{le:l&s}
Every Whitehead automorphism $\alpha\in\Omega$ has a unique decomposition as a product
$\alpha=\alpha_s\alpha_\ell$,
where $\alpha_s\in\Omega_s$ and $\alpha_\ell\in\Omega_\ell$.
Furthermore, $\AAG$ is generated by $\OLS$. 
\end{lemma}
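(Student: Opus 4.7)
The plan is to split on the type of $\alpha\in\Omega$. If $\alpha$ is of type~(1), then it already lies in $\Omega_\ell$, and I take $\alpha_s=\mathrm{id}$ (viewed as the trivial short-range Whitehead automorphism $(\{b\},b)$ for any $b\in L$) together with $\alpha_\ell=\alpha$. Uniqueness in this case is forced by the observation that $\Omega_s\cap\Omega_\ell=\{\mathrm{id}\}$: any type~(2) element that is simultaneously short- and long-range fixes both $\lk(\pg{a})$ and everything outside $\st(\pg{a})$, hence fixes all of $X$.

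For the substantive case $\alpha=(A,a)$ of type~(2), the idea is to split $A$ along $\stl{a}$ and $\lkl{a}$. Set
\[A_s=A\cap\stl{a},\qquad A_\ell=A\setminus\lkl{a}.\]
Since $\stl{a}\setminus\lkl{a}=\{a,a^{-1}\}$ and $a^{-1}\notin A$, these satisfy $A_s\cap A_\ell=\{a\}$ and $A_s\cup A_\ell=A$. By construction $A_s\subset\stl{a}$ and $A_\ell\cap\lkl{a}=\emptyset$, so $(A_s,a)$ is the candidate for $\alpha_s\in\Omega_s$ and $(A_\ell,a)$ the candidate for $\alpha_\ell\in\Omega_\ell$; once both are known to be well defined, relation~(R2) immediately gives $(A_s,a)(A_\ell,a)=(A_s\cup A_\ell,a)=(A,a)=\alpha$.

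The main technical step is to verify via Lemma~\ref{le:whdef} that $(A_s,a)$ and $(A_\ell,a)$ are well defined, inheriting this property from $(A,a)$. For $(A_\ell,a)$ both hypotheses transfer cleanly: removing $\lkl{a}$ does not affect $X\cap A\cap A^{-1}-\lk(\pg{a})$, and each $x\in A_\ell-A_\ell^{-1}$ still lies in $A-A^{-1}$, so $a\geq x$. For $(A_s,a)$ the first hypothesis is vacuous because $X\cap A_s\cap A_s^{-1}\subset\st(\pg{a})$ cannot meet $\Gamma-\st(\pg{a})$, and the domination hypothesis $a\geq x$ for $x\in A_s-A_s^{-1}$ follows because $\stl{a}$ is closed under inversion, which forces $x^{-1}\notin A$ and hence $x\in A-A^{-1}$. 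Uniqueness of the pair $(\alpha_s,\alpha_\ell)$ is recovered from these explicit formulas, since $A_s$ and $A_\ell$ are read off directly from $A$ and $a$. The generation of $\AAG$ by $\OLS$ is then immediate from the preceding claim that $\Omega$ generates $\AAG$ combined with the decomposition just produced. The only real obstacle is the well-definedness bookkeeping for $(A_s,a)$ and $(A_\ell,a)$, which I expect to be routine given Lemma~\ref{le:whdef}.
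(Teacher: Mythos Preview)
Your proposal is correct and follows essentially the same route as the paper: split $A$ into $A\cap\stl{a}$ and $A\setminus\lkl{a}$, invoke Lemma~\ref{le:whdef} for well-definedness, and combine via relation~(R2). One small point: your uniqueness argument in the type~(2) case (``$A_s$ and $A_\ell$ are read off directly from $A$ and $a$'') only shows the construction is deterministic given a chosen representation $(A,a)$, not that no other pair $(\beta_s,\beta_\ell)$ works; the paper instead appeals to $\Omega_s\cap\Omega_\ell=\{1\}$, which you already noted for type~(1) and should invoke here as well.
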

\begin{proof}
If $\alpha$ is of type~(1), then $\alpha_s=1$ and $\alpha_\ell=\alpha$.
So assume $\alpha=(A,a)$.
Set $A_1= A\cap\stl{a}$ and set $A_2=A-\lkl{a}$.
By Lemma~\ref{le:whdef}, both $(A_1,a)$ and $(A_2,a)$ are well defined.
So set $\alpha_s=(A_1,a)$ and set $\alpha_\ell=(A_2,a)$.
By Equation~(\ref{eq:R2}), we have $\alpha=\alpha_s\alpha_\ell$.
This decomposition is unique since $\Omega_s\cap\Omega_\ell=\{1\}$.

Of course, this means that $\Omega\subset\langle\OLS\rangle$.
Then since $\Omega$ generates $\AAG$, the set $\OLS$ also generates $\AAG$.
\end{proof}

We call $\alpha_s$ the \emph{short-range part} of $\alpha$ and $\alpha_\ell$ the \emph{long-range part} of $\alpha$.
Let $\ell\co\Omega\to\Omega_\ell$ be given by $\ell(\alpha)=\alpha_\ell$
and let $s\co\Omega\to\Omega_s$ be given by $s(\alpha)=\alpha_s$.

\begin{definition}\label{de:sort}
Suppose $\alpha\in\Omega_\ell$ and $\beta=(B,b)\in\Omega_s$.
Of course, we may assume that for $x\in\lkl{b}$, not both $x$ and $x^{-1}$ are in $B$.
Let the \emph{sorting substitution} of $\alpha\beta$ be the word in $\Omega$ defined as follows.

If $\alpha$ is given by a permutation of $L$, 
then $(\alpha(B),\alpha(b))\in \Omega_s$, and 
the substitution is:
\begin{equation}\label{eq:permsubs}
\alpha\beta\mapsto (\alpha(B),\alpha(b))\alpha
\end{equation}

Now suppose $\alpha=(A,a)$.
If $\pg{a}=\pg{b}$, then the substitution is given by:
\begin{equation}\label{eq:vertsubs}
\alpha\beta\mapsto \beta\alpha
\end{equation}

If $a\in\lkl{b}$, then we know $b,b^{-1}\notin A$.
As we assumed earlier, not both $a\in B$ and $a^{-1}\in B$.
The substitution is given by:
\begin{equation}\label{eq:adjsubs}
\alpha\beta\mapsto\left\{\begin{array}{ll}
\beta\alpha & \mbox{if $a\notin B$, $a^{-1}\notin B$}\\
\beta s((A-a+b,b))\ell((A-a+b,b))\alpha & \mbox{if $a\notin B$, $a^{-1}\in B$}\\
\beta s((A-a+b^{-1},b^{-1}))\ell((A-a+b^{-1},b^{-1}))\alpha & \mbox{if $a\in B$, $a^{-1}\notin B$}
\end{array}\right.
\end{equation}
If $a\notin\stl{b}$,
the substitution is given by:
\begin{equation}\label{eq:farsubs}
\alpha\beta\mapsto \beta\alpha
\end{equation}
\end{definition}

\begin{sublemma}\label{sl:domadjdom}
Suppose $b\in L$, $c\in\lkl{b}$ and $b\geq c$.
If $a\in L$ and $a\geq b$, then $b\in\stl{a}$.
\end{sublemma}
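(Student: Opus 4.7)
The plan is to unfold all the definitions and do a short case analysis on where $\pg{c}$ sits relative to $\pg{a}$. Recall that $b\geq c$ means $\lk(\pg{c})\subset\st(\pg{b})$, that $a\geq b$ means $\lk(\pg{b})\subset\st(\pg{a})$, and that $b\in\stl{a}$ just means $\pg{b}\in\st(\pg{a})$, i.e.\ either $\pg{b}=\pg{a}$ or $\pg{b}$ and $\pg{a}$ are adjacent.

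First I would make the one nontrivial observation: since $c\in\lkl{b}$, we have $\pg{c}\in\lk(\pg{b})$, and then $a\geq b$ gives $\pg{c}\in\st(\pg{a})$. So either $\pg{c}=\pg{a}$ or $\pg{c}\in\lk(\pg{a})$.

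Next I would dispose of the two cases. In the first case, $\pg{a}=\pg{c}$ is adjacent to $\pg{b}$ (since $c\in\lkl{b}$), so $\pg{b}\in\lk(\pg{a})\subset\st(\pg{a})$. In the second case, $\pg{a}\in\lk(\pg{c})$, and now I use the second hypothesis $b\geq c$, which gives $\lk(\pg{c})\subset\st(\pg{b})$, so $\pg{a}\in\st(\pg{b})$; that means $\pg{a}=\pg{b}$ or $\pg{a}$ and $\pg{b}$ are adjacent, and in either case $\pg{b}\in\st(\pg{a})$. Either way $b\in\stl{a}$, which is the conclusion.

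I do not anticipate any real obstacle here: the statement is a direct diagram chase through the definitions of $\lk$, $\st$, and the domination relation, and the only mildly nonobvious step is invoking $b\geq c$ via the element $\pg{a}\in\lk(\pg{c})$ produced by the previous step.
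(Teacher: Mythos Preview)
Your proof is correct and is essentially the same definition-chase as the paper's, which is even shorter: it observes that $a\geq c$ by transitivity of domination, and then $\pg{b}\in\lk(\pg{c})\subset\st(\pg{a})$ directly, avoiding your case split on whether $\pg{c}=\pg{a}$.
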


\begin{proof}
By transitivity, $a\geq c$.
Then by the definition of domination, $b\in\stl{a}$.
\end{proof}

\begin{lemma}\label{le:validsubs}
All of the elements substituted for $\alpha\beta$ in Definition~\ref{de:sort} 
are equal to $\alpha\beta$ in $\AAG$.
\end{lemma}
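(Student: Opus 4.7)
The plan is to verify each of the substitutions in Definition~\ref{de:sort} by identifying, for each, a relation or short combination of relations from Definition~\ref{de:identities} that produces the desired equality in $\AAG$; Proposition~\ref{pr:identities} then supplies the actual identity. I would handle the four substitution rules in the order they appear, saving the geometric case for last.

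For substitution~(\ref{eq:permsubs}), where $\alpha$ is of type~(1), the claim is a direct instance of (R6). For substitution~(\ref{eq:vertsubs}), where $\pg a=\pg b$, either $a=b$ (in which case both $\alpha\beta$ and $\beta\alpha$ reduce to $(A\cup B,a)$ via (R2), using that $A\cap\lkl a=\emptyset$ and $B\subset\stl a$ force $A\cap B=\{a\}$), or $a=b^{-1}$ (in which case (R1) rewrites $\beta$ as the inverse of a Whitehead automorphism $(B',a)$, and two applications of (R2) show that $(A,a)$ commutes with $(B',a)$, hence with $\beta$).

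For substitution~(\ref{eq:adjsubs}), with $a\in\lkl b$, I would handle the three subcases separately. If $a,a^{-1}\notin B$, relation (R3b) applies directly, since $A\cap\lkl a=\emptyset$ together with $b\in\lkl a$ forces $b,b^{-1}\notin A$, and the remaining hypotheses are immediate. If $a\notin B$ and $a^{-1}\in B$, then (R4b) applied with $(A,a)$ as the conjugator yields $\alpha\beta\alpha^{-1}=\beta\cdot(A-a+b,b)$, which rearranges to the substitution after splitting $(A-a+b,b)$ via Lemma~\ref{le:l&s}. If $a\in B$ and $a^{-1}\notin B$, I would apply (R4b) instead with $\alpha^{-1}$ as conjugator (using (R1) to describe $\alpha^{-1}$), obtaining $\alpha^{-1}\beta\alpha=\beta\cdot(A-a+b,b)$ and hence $\alpha\beta=\beta\alpha\cdot(A-a+b^{-1},b^{-1})$ by (R1); a second application of (R3b), whose hypotheses hold because $b^{-1}\in\lkl a$ and $A\cap\lkl a=\emptyset$, then commutes $\alpha$ past the new factor to produce the claimed form.

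The main obstacle will be substitution~(\ref{eq:farsubs}), where $a\notin\stl b$, because the identity $\alpha\beta=\beta\alpha$ does not instantiate any single relation in $R$. The key structural input is Sublemma~\ref{sl:domadjdom}. From $B\subset\stl b$ I get $a,a^{-1}\notin B$, and symmetrically $b\notin\lkl a$ with $\pg a\neq\pg b$. For any $y\in B-B^{-1}$ one has $b\geq y$ and $y\in\lkl b$, so if $\pg y$ were adjacent to $\pg a$ the sublemma would place $a\in\stl b$, a contradiction; thus the nontrivial action of $\beta$ occurs only on generators whose vertex is neither $\pg a$ nor adjacent to $\pg a$. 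By a parallel argument, assuming $\beta$ is nontrivial so that $b$ adjacent-dominates some neighbor, we cannot have $b\in A$ with $b^{-1}\notin A$; so either both $b,b^{-1}\in A$ (and $\alpha$ uniformly conjugates the component of $\pg b$ in $\Gamma-\st(\pg a)$ by $a$) or both $b,b^{-1}\notin A$. I would then decompose $\alpha$ via (R2) into its transvection and partial-conjugation parts and verify commutativity factor by factor: the transvection part acts trivially on all generators in $\stl b$ that $\beta$ can touch (again by the sublemma), so it commutes with $\beta$ via (R3a); the partial-conjugation part, when it conjugates the component containing $\pg b$, yields $\alpha\beta(y)=a^{-1}yba=\beta\alpha(y)$ for $y$ in that component, and commutes trivially with $\beta$ on all other generators.
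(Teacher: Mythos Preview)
Your argument is correct, and for substitutions~(\ref{eq:permsubs}), (\ref{eq:vertsubs}), and the first two cases of~(\ref{eq:adjsubs}) it matches the paper's proof essentially verbatim. There are two places where you take a different route.

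For the third case of~(\ref{eq:adjsubs}) (with $a\in B$, $a^{-1}\notin B$), the paper simply notes that since $\beta\in\Omega_s$ one may rewrite $\beta=(\stl{b}-B,b^{-1})$ and then invoke the second case with $b^{-1}$ in place of $b$; this is a one-line reduction. Your approach---applying (R4b) with $\alpha^{-1}$ as conjugator, then (R1), then (R3b) to commute $\alpha$ past the new factor---is longer but equally valid.

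For substitution~(\ref{eq:farsubs}) when $b,b^{-1}\in A$, the paper's argument is to apply (R8) to write $\alpha=(L-a^{-1},a)(L-A,a^{-1})$; then $b\notin L-A$, so $(L-A,a^{-1})$ commutes with $\beta$ by the $b\notin A$ case already handled, and the conjugation $(L-a^{-1},a)$ commutes with $\beta$ by (R9). Your approach instead splits $\alpha$ into a transvection part and a partial-conjugation part and, for the latter, verifies commutativity by direct computation on generators. This is fine for the lemma as stated, but note that the paper's argument stays entirely inside the relation set $R$. That matters downstream: Proposition~\ref{pr:pressort} refines \partref{it:complements} to a statement about congruence modulo $R$, and its proof simply points back to this lemma with the remark that ``each of the sorting substitutions comes from applications of relations from $R$.'' Your direct verification would leave a gap there; you would need to redo the $b\in A$ case of~(\ref{eq:farsubs}) via (R8) and (R9) at that point anyway.
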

\begin{proof}
Substitution~(\ref{eq:permsubs}) is valid by Equation~(\ref{eq:R6}).
Substitution~(\ref{eq:vertsubs}) is valid by Equation~(\ref{eq:R2}) (if $a=b$, then we know $A\cap B=\{a\}$ and therefore $\alpha\beta=(A\cup B,a)=\beta\alpha$) and Equation~(\ref{eq:R1}) (if $a^{-1}=b$, then $\alpha^{-1}=(L-A-\lkl{a},a^{-1})$ and Equation~(\ref{eq:R2}) implies $\alpha^{-1}$ and $\beta$ commute).
The first substitution in Equation~(\ref{eq:adjsubs}) is valid by Equation~(\ref{eq:R3}b).
The second substitution in Equation~(\ref{eq:adjsubs}) is valid by Equation~(\ref{eq:R4}b) (and Equation~(\ref{eq:R2}) to split $(A-a+b,b)$ into long-range and short-range parts).
To get the third substitution in Equation~(\ref{eq:adjsubs}), note that since $\beta\in\Omega_s$, we have $\beta=(\stl{b}-B,b^{-1})$.
Then the third substitution is simply the second substitution, after representing $\beta$ in a different way.

Now suppose $a\notin\stl{b}$.
By Sublemma~\ref{sl:domadjdom} (assuming $\beta$ is nontrivial), we know that $a\not\geq b$.
Then if $b\notin A$, each element of the component of $\pg{b}$ in $\Gamma-\st(\pg{a})$ is fixed by $\alpha$.
Then $A\cap B=\emptyset$ since $\beta\in\Omega_s$.
So if $b\notin A$, then Equation~(\ref{eq:farsubs}) is valid by Equation~(\ref{eq:R3}).
If $b\in A$, we apply Equation~(\ref{eq:R8}) to replace $(A,a)$ by $(L-a^{-1},a)(L-A,a^{-1})$.
Then $b\notin L-A$, so $(L-A,a^{-1})$ commutes with $\beta$ by Equation~(\ref{eq:R3}), and $(L-a^{-1},a)$ commutes with $\beta$ by Equation~(\ref{eq:R9}) ($a,a^{-1}\notin B$ since $\beta\in\Omega_s$).
After commuting both automorphisms past $\beta$, we recombine them by Equation~(\ref{eq:R8}).
\end{proof}

\begin{remark}
Equation~(\ref{eq:adjsubs}) indicates that there are many examples of graphs $\Gamma$ such that neither $\langle\Omega_\ell\rangle$ nor $\langle\Omega_s\rangle$ is a normal subgroup of $\AAG$.
\end{remark}

\begin{lemma}\label{le:1sr}
Suppose we have $(A,a)\in \Omega_\ell$.
Suppose $w$ is a product (in any order) of long-range automorphisms of the form $\ell((A-a+x,x))$ for $x\in\stl{a}$ with $x\geq a$,
 together with short-range automorphisms with multiplier $b^{\pm1}$.
Then we can apply sorting substitutions to rewrite $w$ as a word $w'$ satisfying the same hypotheses as $w$ (for the same $(A,a)$), and such that all the short-range automorphisms in $w'$ appear to the left of any long-range automorphisms in $w'$.
\end{lemma}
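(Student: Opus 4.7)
The plan is to apply the sorting substitutions of Definition~\ref{de:sort} iteratively, moving short-range factors leftward past long-range ones in $w$, until every short-range factor appears to the left of every long-range factor.

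First, I would classify what happens when a sorting substitution is applied to an adjacent pair $\alpha\beta$ in $w$, where $\alpha=\ell((A-a+x,x))$ has multiplier $x$ and $\beta=(B,b_0)$ has multiplier $b_0\in\{b,b^{-1}\}$. Exactly one of Equations~(\ref{eq:vertsubs}), (\ref{eq:adjsubs}), or~(\ref{eq:farsubs}) applies, depending on the position of $x$ relative to $b_0$. The same-vertex case ($\pg{x}=\pg{b_0}$) and the far case ($x\notin\stl{b_0}$) are simple commutations. In the adjacent case ($x\in\lkl{b_0}$ with $\pg{x}\neq\pg{b_0}$), subcase~1 is also a commutation, but subcases~2 and~3 insert a new short-range factor $s((A-a+b_0^{\pm1},b_0^{\pm1}))$ and a new long-range factor $\ell((A-a+b_0^{\pm1},b_0^{\pm1}))$ between $\beta$ and $\alpha$, both carrying multiplier $b_0\in\{b,b^{-1}\}$.

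Next, I would verify that every newly introduced factor fits the prescribed form. In subcases~2 and~3, $B$ contains either $x^{-1}$ or $x$, so by Lemma~\ref{le:whdef} we must have $b_0\geq x$; combined with the hypothesis $x\geq a$, transitivity gives $b_0\geq a$. A combinatorial argument using the adjacency $x\in\lkl{b_0}$ together with the domination chain $b_0\geq x\geq a$ shows $b_0\in\stl{a}$ (appealing to Sublemma~\ref{sl:domadjdom} or a direct check on stars). Thus the new long-range factor is of the form $\ell((A-a+y,y))$ with $y=b_0\in\stl{a}$ and $y\geq a$, and the new short-range factor has multiplier in $\{b,b^{-1}\}$; both conform to the lemma's hypothesis with the same $(A,a)$.

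Finally, I would establish termination. The key observation is that every new long-range factor has multiplier in $\{b,b^{-1}\}$, so any later interaction between such a new long-range factor and any short-range factor (which has multiplier $b^{\pm1}$) falls into the same-vertex case of Equation~(\ref{eq:vertsubs}) and commutes without generating new factors. Hence only the finitely many original long-range factors $\ell((A-a+x,x))$ can trigger a subcase-2-or-3 substitution, and once a short-range factor has been moved past an original long-range factor it stays to its left. I would formalize termination by a lexicographic induction on, for example, the pair consisting of the number of short-range factors lying to the right of some original long-range factor and the total length of the word. The main obstacle is this bookkeeping: although the word can grow with each subcase-2-or-3 substitution, the controlled multiplier structure of the generated factors ensures that the process halts after finitely many steps.
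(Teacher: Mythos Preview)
Your approach matches the paper's: both iterate sorting substitutions and both isolate the same key observation, namely that every newly spawned long-range factor carries multiplier in $\{b,b^{-1}\}$ and hence commutes with any short-range factor via Substitution~(\ref{eq:vertsubs}) without producing further offspring. Your verification that the new long-range factor $\ell((A-a+b_0^{\pm1},b_0^{\pm1}))$ actually satisfies $b_0\in\stl{a}$ and $b_0\geq a$ is more explicit than the paper's treatment, which leaves this implicit. (A small caveat: Sublemma~\ref{sl:domadjdom} does not apply in the direction you want, since you have $b_0\geq a$ rather than $a\geq b_0$; your ``direct check on stars'' is what actually carries the argument.)

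There is, however, a genuine gap in your termination bookkeeping. The measure you propose --- the number of short-range factors lying to the right of some original long-range factor --- can increase. Take $w=\alpha_1\alpha_2\beta$ with $\alpha_1,\alpha_2$ original long-range factors having multipliers $x_1,x_2\notin\{b^{\pm1}\}$ and $\beta$ short-range; your count is $1$. A subcase-2-or-3 substitution at $\alpha_2\beta$ gives $\alpha_1\,\beta\,s\,\ell\,\alpha_2$, and now two short-range factors ($\beta$ and the new $s$) lie to the right of the original $\alpha_1$, so your count jumps to $2$ while the length also grows. The paper sidesteps this by inducting instead on $k$, the number of long-range factors with multiplier not in $\{b^{\pm1}\}$: this $k$ is invariant under every sorting substitution, and one handles the rightmost such factor first, pushing all short-range letters past it (which only creates factors with multiplier $b^{\pm1}$) before invoking the inductive hypothesis on the remaining prefix with $k-1$ such factors. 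If you prefer a measure that works for substitutions applied in arbitrary order, the tuple $(m_k,m_{k-1},\ldots,m_1)$ in lexicographic order does the job, where $m_i$ is the number of short-range factors to the right of the $i$th original long-range factor with multiplier outside $\{b^{\pm1}\}$ (indexed left to right): a substitution at the $j$th such factor decreases $m_j$ and leaves $m_i$ for $i>j$ unchanged.
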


\begin{proof}
We argue by induction on the number $k$ of long-range automorphisms in $w$ with multipliers other than $b^{\pm1}$.
It is clear that by applying Substitution~(\ref{eq:vertsubs}), we can move a short-range automorphism appearing in the word to the left across a long-range automorphism with multiplier $b^{\pm1}$.
In the base case $k=0$, we only need to move short-range automorphisms across long-range automorphisms with multiplier $b^{\pm1}$, so we are done.
Now suppose $k>0$.
We break up $w$ as $w_1w_2$, where there is only one long-range automorphism with multiplier not equal to $b^{\pm1}$ in $w_2$, say $\ell((A-a+x,x))$.
If $x\notin\stl{b}$, then by applying Substitution~(\ref{eq:farsubs}) (and possibly Substitution~(\ref{eq:vertsubs})), we can move all the short-range automorphisms in $w_2$ to the left across $\ell((A-a+x,x))$.
If $x\in\lkl{b}$, then by applying Substitution~(\ref{eq:adjsubs}), we can move any short-range automorphism to the left across $\ell((A-a+x,x))$.
In doing so, we may introduce a new short-range automorphism with multiplier $b^{\pm1}$ to the left of $\ell((A-a+x,x))$, as well as a new long-range automorphism $\ell((A-a+y,y))$ where $y=b^{\pm1}$.
It is then clear that by applying Substitutions~(\ref{eq:adjsubs}) and~(\ref{eq:vertsubs}), we can move all the short-range automorphisms to the left across $\ell((A-a+x,x))$.
In either case, we can rewrite $w_2$ as a word $w_3v$ satisfying the hypotheses of the lemma, where $w_3$ contains no long-range elements with multipliers other than $b^{\pm1}$ and $v$ contains no short-range elements.
Then by induction, $w_1w_3$ can be rewritten as a word $u$ satisfying the conclusions of the lemma, and we have rewritten $w$ as a word $uv$ satisfying the conclusions of the lemma.
\end{proof}

\begin{lemma}\label{le:1lr}
Suppose $\alpha=(A,a)\in\Omega_\ell$ and $\beta_1,\ldots,\beta_k\in \Omega_s$.
Then we can apply finitely many sorting substitutions to the word
$w_0=\alpha\beta_k\cdots\beta_1$
 to get a word where all of the long-range elements are of the form $\ell((A-a+x,x))$ for various $x\in \stl{a}$ with $x\geq a$, and all the short-range elements are to the left of any long-range elements.
\end{lemma}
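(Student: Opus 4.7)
The plan is to proceed by induction on $k$. The base case $k=0$ is immediate: $w_0 = \alpha = \ell((A-a+a,a))$ is already of the required form, with $x = a$ trivially in $\stl{a}$ and $a \geq a$.

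For the inductive step, write $\beta_k = (B_k, b_k)$ and apply the sorting substitution of Definition~\ref{de:sort} to the leading pair $\alpha\beta_k$. This rewrites $\alpha\beta_k = \beta_k v_k \alpha$, where $v_k$ is either empty (in the pure commutation cases (\ref{eq:vertsubs}), (\ref{eq:farsubs}), or case~1 of (\ref{eq:adjsubs})) or has the form $s_k\ell_k$ (in cases~2 or~3 of (\ref{eq:adjsubs})), with $s_k$ short-range of multiplier $b_k^{\pm1}$ and $\ell_k = \ell((A-a+b_k^{\pm1},b_k^{\pm1}))$. In the nontrivial cases, one of $a,a^{-1}$ lies in $B_k - B_k^{-1}$, so well-definedness of $\beta_k$ together with condition~(2) of Lemma~\ref{le:whdef} forces $b_k \geq a$; since $b_k \in \lkl{a} \subseteq \stl{a}$, the long-range element $\ell_k$ is of the required form. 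The word now reads $\beta_k v_k\alpha\beta_{k-1}\cdots\beta_1$; the sub-word $\alpha\beta_{k-1}\cdots\beta_1$ has $k-1$ short-range factors, so by the inductive hypothesis it rewrites as $\sigma\lambda$, with $\sigma$ a product of short-range automorphisms and $\lambda$ a product of long-range automorphisms of the required form, leaving us with $\beta_k v_k\sigma\lambda$.

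If $v_k$ is empty we are done. Otherwise $v_k = s_k\ell_k$ and it remains to commute $\ell_k$ rightward past $\sigma$. I invoke Lemma~\ref{le:1sr} iteratively, once per multiplier $b$ occurring among the short-range factors of $\sigma$: in each pass, Lemma~\ref{le:1sr} is applied to the sub-word consisting of the long-range elements currently present (each of the required form, including $\ell_k$) together with the short-range elements of multiplier $b^{\pm1}$. Short-range elements of other multipliers commute freely through long-range of non-adjacent multiplier by (\ref{eq:vertsubs}) or (\ref{eq:farsubs}); where (\ref{eq:adjsubs}) forces an introduction, the new long-range element is again of the required form by the closure property described next.

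The main obstacle is verifying that closure property. When a long-range element $\ell((A-a+x,x))$ with $x \in \lkl{a}$ and $x \geq a$ is commuted past a short-range $(B',b')$ via case~2 or~3 of (\ref{eq:adjsubs}), well-definedness of $(B',b')$ forces $b' \geq x$, hence $b' \geq a$ by transitivity, and the new long-range element must coincide as a Whitehead automorphism with $\ell((A-a+b'^{\pm1},b'^{\pm1}))$. This reduces to the containment $(A-a)\cap\lkl{x} \subseteq \lkl{b'}$, which I verify as follows. For $z$ in the left side, $b' \geq x$ yields $\pg{z} \in \lk(\pg{x}) \subseteq \st(\pg{b'})$. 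Suppose $\pg{z} = \pg{b'}$: then $b'$ or $b'^{-1}$ lies in $A-a$, so $A \cap \lkl{a} = \emptyset$ forces $b' \notin \lkl{a}$. On the other hand, from $x \in \lkl{a}$ we have $\pg{a} \in \lk(\pg{x}) \subseteq \st(\pg{b'})$; noting $\pg{a} \neq \pg{b'}$ (since otherwise $z \in \{a,a^{-1}\}$, contradicting $z \in A-a$ together with $a^{-1} \notin A$), we conclude $\pg{a} \in \lk(\pg{b'})$, i.e.\ $b' \in \lkl{a}$—a contradiction. Hence $\pg{z} \in \lk(\pg{b'})$, i.e.\ $z \in \lkl{b'}$, and the naively introduced long-range and its canonical form agree as Whitehead automorphisms. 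This closure is precisely what makes the nested induction (outer on $k$, inner via Lemma~\ref{le:1sr}) go through.
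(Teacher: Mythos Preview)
Your induction runs in the opposite direction from the paper's. The paper strips off $\beta_1$ from the \emph{right}: applying the inductive hypothesis to $\alpha\beta_k\cdots\beta_2$ yields $u_1\delta_m\cdots\delta_1$ with $u_1$ a word in $\Omega_s$ and each $\delta_i$ long-range of the required form, so the remaining word $\delta_m\cdots\delta_1\beta_1$ contains exactly one short-range factor and a single application of Lemma~\ref{le:1sr} finishes the proof. Your left-to-right induction instead leaves you needing to push $\ell_k$ past the entire block $\sigma$, whose short-range factors may carry many different multipliers.

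Your proposal to ``invoke Lemma~\ref{le:1sr} iteratively, once per multiplier $b$'' is not well-posed as stated: Lemma~\ref{le:1sr} applies to a contiguous word in which \emph{every} short-range factor has multiplier $b^{\pm1}$, but the factors of $\sigma$ with a given multiplier need not be contiguous, and the interleaved factors of other multipliers cannot be treated as inert (Substitution~(\ref{eq:adjsubs}) may fire when you try to pass a long-range element across them). The repair is to process the factors of $\sigma$ one at a time from left to right, applying Lemma~\ref{le:1sr} at each step to the current long-range block followed by the next single short-range factor; this terminates in $|\sigma|$ steps. But at that point you have essentially reproduced the paper's right-to-left induction as a subroutine inside your own inductive step, so the paper's organization is the cleaner one.

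That said, your closure computation in the final paragraph is correct and is worth recording: it verifies that when Substitution~(\ref{eq:adjsubs}) is applied to $\ell((A-a+x,x))$ with $x\in\lkl a$, the newly introduced long-range element really is of the form $\ell((A-a+y,y))$ for the \emph{original} pair $(A,a)$, via the containment $(A-a)\cap\lkl x\subseteq\lkl{b'}$. The paper's proof of Lemma~\ref{le:1sr} asserts exactly this (``a new long-range automorphism $\ell((A-a+y,y))$ where $y=b^{\pm1}$'') without justification, so your argument fills a small gap there.
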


\begin{proof}
We prove the lemma by induction on $k$.
If $k=0$, it is true.
Now assume the lemma holds for $\alpha\beta_k\cdots\beta_2$.
Then we rewrite $w_0$ as
$u_1\delta_m\cdots\delta_1\beta_1$,
where $u_1$ is a word in $\Omega_s$ and $\delta_i=\ell((A-a+x_i,x_i))$ for some $x_1,\ldots,x_m\in \stl{a}$ with $x_i\geq a$.
We apply Lemma~\ref{le:1sr} to the subsequence $\delta_m\cdots\delta_1\beta_1$ and rewrite it as $u_2v$, where $u_2$ is a word in $\Omega_s$ and $v$ is a product of automorphisms of the form $\ell((A-a+x,x))$ for various $x\geq a$.
Then we have rewritten $w_0$ as $u_1u_2v$, which is in the desired form.
\end{proof}

\begin{proof}[Proof of \partref{it:complements}]
We induct on the length of the word $w$ in $\OLS$ that we wish to rewrite.
If $|w|\leq1$, we are done.
Now assume the theorem is true for words of length $|w|-1$.
Let $w'$ be a word in $\OLS$ and let $\alpha\in\OLS$ such that $w=\alpha w'$ is a reduced factorization.
Then the theorem applies to $w'$, so $w'=w_sw_\ell$ where $w_s$ is a word in $\Omega_s$ and $w_\ell$ is a word in $\Omega_\ell$.
If $\alpha\in\Omega_s$, we are done, so assume $\alpha\in\Omega_\ell$.
If $\alpha$ is induced by a permutation of $L$, we can move it across $w_s$ by $|w_s|$ applications of Substitution~(\ref{eq:permsubs}) and we are done.
Otherwise, $\alpha=(A,a)$, and we apply Lemma~\ref{le:1lr} to rewrite $\alpha w_s$ as $w'_sw'_\ell$, with $w'_s$ a word in $\Omega_s$ and $w'_\ell$ a word in $\Omega_\ell$.
Then $w=w'_sw'_\ell w_\ell$ and we are done.
\end{proof}


\subsection{The homology representation and short-range automorphisms}

Let $H_\Gamma$ denote the abelianization $H_1(A_\Gamma)\cong \Z^n$ of $A_\Gamma$.
Since the commutator subgroup of $A_\Gamma$ is a characteristic subgroup of $A_\Gamma$, 
every automorphism of $A_\Gamma$ induces an automorphism of $H_\Gamma$.
This defines a map $\AAG\to\Aut H_\Gamma\cong \GL(n,\Z)$, which we call the \emph{homology representation}.
For $\alpha\in\AAG$, we denote its image under the homology representation by $\alpha_*\in\Aut H_\Gamma$.
In this section, we prove \partref{it:srinj}, and we examine the structure of $\langle \Omega_s\rangle$.

\begin{lemma}\label{le:srsupp}
Let $\beta\in\langle\Omega_s\rangle$ and let $c\in X$.
Then $\supp \beta(c)$ is a clique contained in $\st(c)$.
\end{lemma}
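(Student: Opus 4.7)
The plan is to induct on the word length $n$ of $\beta$ as a product of elements of $\Omega_s$. The base case $n=0$ is immediate since $\beta(c)=c$ has support $\{c\}$. For the inductive step, write $\beta=\beta_0\beta'$ with $\beta_0=(A,a)\in\Omega_s$. By the inductive hypothesis $K:=\supp\beta'(c)$ is a clique in $\st(c)$, so $\beta'(c)$ lies in the free abelian subgroup $\langle K\rangle$ and we may write $\beta'(c) = \prod_i y_i^{n_i}$ with the $y_i\in K$ distinct. Since every $y\in\lk(\pg{a})$ commutes with $a$, we get $\beta_0(y_i)=y_i a^{\chi(y_i)}$ for some $\chi(y_i)\in\{-1,0,1\}$; here $\chi(y_i)=0$ whenever $y_i\notin\lk(\pg{a})$ (which includes $y_i=\pg{a}$) or whenever $y_i,y_i^{-1}\in A$ (since then $a^{-1}y_ia=y_i$). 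Consequently $\supp\beta(c)\subset K\cup\{\pg{a}\}$.

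If $\pg{a}\in K$ or $\pg{a}\notin\supp\beta(c)$, then $\supp\beta(c)$ is a sub-clique of $K$ lying in $\st(c)$ and we are done. Otherwise $\pg{a}\notin K$ but $\pg{a}\in\supp\beta(c)$; this requires $\chi(y)\neq 0$ for some $y\in K$, meaning exactly one of $y,y^{-1}$ is in $A$. By Lemma~\ref{le:whdef}, $a\geq y$, i.e., $\lk(y)\subset\st(\pg{a})$. For any $z\in K$ with $z\neq y$, the clique condition on $K$ gives $\adj{z}{y}$, hence $z\in\lk(y)\subset\st(\pg{a})$; since $\pg{a}\notin K$ gives $z\neq\pg{a}$, we conclude $\adj{z}{\pg{a}}$. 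Together with $y\in\lk(\pg{a})$, this shows $K\cup\{\pg{a}\}$ is a clique.

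It remains to check $\pg{a}\in\st(c)$. If $\pg{a}=c$ this is trivial; otherwise, since $y\in K\subset\st(c)$, either $y=c$ (so $c\in\lk(\pg{a})$ directly) or $y\in\lk(c)$, which via $\lk(y)\subset\st(\pg{a})$ gives $c\in\st(\pg{a})$ and hence $\adj{c}{\pg{a}}$. The main obstacle is linking the appearance of $\pg{a}$ in the support to the domination constraint on $(A,a)$; this link is that the only way to introduce a surviving $a$ is through a letter $y$ with asymmetric membership in $A$, whereupon Lemma~\ref{le:whdef} forces $a\geq y$ and the clique condition on $K$ propagates this into the required structure.
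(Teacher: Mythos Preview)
Your proof is correct and follows essentially the same inductive strategy as the paper: both argue by induction on the length of $\beta$, use the short-range condition $A\subset\stl{a}$ together with Lemma~\ref{le:whdef} to obtain the domination $a\geq y$, and then propagate adjacency through the clique $K$. Your version is somewhat more explicit---you exploit the abelian structure of $\langle K\rangle$ and spell out the verification that $\pg{a}\in\st(c)$---whereas the paper argues directly with words and leaves that last containment implicit, but the underlying idea is the same.
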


\begin{proof}
We proceed by induction on the length of $\beta$ as a product of members of $\Omega_s$.
If $|\beta|=0$, the lemma holds.
Now suppose that $w$ is a word such that $\supp w$ is a clique contained in $\st(c)$ and that $(B,b)\in\Omega_s$.
Of course, we may assume that $B\cap B^{-1}=\emptyset$.
If $(B-\{b\})\cap (\supp w)^{\pm1}=\emptyset$, then $\beta(w)=w$.
So suppose we have $a\in (B-\{b\})\cap (\supp w)^{\pm1}$.
Then $a\in\stl{c}$ and by Lemma~\ref{le:whdef}, $b\geq a$.
Since $\beta\in\Omega_s$, $b$ is adjacent to $a$, and since $\supp w$ is a clique, it follows from the definition of domination that $b$ is adjacent to every other member of $\supp w$.
Since $\supp (B,b)(w)\subset \{\pg{b}\}\cup\supp w$, this proves the lemma.
\end{proof}

\begin{proof}[Proof of \partref{it:srinj}]
Suppose $\beta\in\langle\Omega_s\rangle$ and $\beta_*\in \Aut H_\Gamma$ is trivial.
Then for any $a\in X$, it follows from Lemma~\ref{le:srsupp} that we can commute the elements of $\supp\beta(a)$ past each other.
But since $\beta_*$ is trivial, the sum exponent of $a$ in $\beta(a)$ is $1$, and the sum exponent of any other $x$ in $\beta(a)$ is $0$.
So $\beta(a)=a$ for any $a$, and
$\beta$ is trivial.
\end{proof}

Now we examine the structure of $\langle \Omega_s\rangle$.
The argument below tells us the structure of the images of many subgroups of $\AAG$ under the homology representation, so we phrase it in greater generality.
In particular, we prove an intermediate result that is quoted in the sequel to the current paper~\cite{ssraag}.

Let $\leq'$ be a transitive, reflexive relation on $X$, such that $a\leq'b$ implies $a\leq b$ for $a,b\in X$ (for example, the adjacent domination relation).
Write $a\sim' b$ when $a\leq' b$ and $b\leq' a$; then $\sim'$ is an equivalence relation on $X$.
Let $G<\Aut H_\Gamma$ be generated by $\{(\tau_{a,b})_*|\text{$a\geq'b$}\}$.
Let $C_1\cup \cdots\cup C_m = X$ be the $\sim'$--classes of $X$.
Let $N=\langle\{(\tau_{a,b})_*|\text{$a,b\in X$, $a\geq' b$ and $a\not\sim' b$}\}\rangle$
and for each $i=1,\ldots,m$, let $G_i=\langle\{(\tau_{a,b})_*|\text{$a,b\in C_i$}\}\rangle$.
\begin{proposition} \label{pr:homrepimstruct}
The group $N$ is nilpotent, each $G_i\cong \SL(|C_i|,\Z)$, and the inclusion maps of $N$ and the $G_i$ into $G$ give the decomposition: 
\begin{equation}\label{eq:transstruct}G\cong \left(G_1\times \cdots\times G_m\right)\ltimes N\end{equation}
\end{proposition}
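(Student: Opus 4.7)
The plan is to exploit the block-matrix structure that $\leq'$ imposes on the elementary matrices $(\tau_{a,b})_* = I + E_{a,b}$ acting on $H_\Gamma \cong \Z^n$, where $E_{a,b}$ denotes the matrix sending the basis vector indexed by $b$ to the basis vector indexed by $a$ and killing the other basis vectors. Because $\leq'$ is transitive and reflexive, it descends to a well-defined partial order on the set of classes $\{C_1, \ldots, C_m\}$; extending to a linear order, I would then order the basis of $H_\Gamma$ so that each $C_i$ forms a contiguous block, with blocks arranged in decreasing order of this partial order. In this ordering every generator of $G_i$ is supported in the $C_i$-block of the diagonal, and every generator of $N$ lies strictly above the block diagonal.

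With this setup, $G_i$ identifies with $\SL(|C_i|, \Z)$: its generators $I + E_{a,b}$ for $a \neq b$ in $C_i$ are the standard elementary generators of $\SL(|C_i|, \Z)$ on the $C_i$-block, and since elements of $G_i$ fix basis vectors outside $C_i$, the block-restriction map $G_i \to \SL(|C_i|, \Z)$ is both surjective and injective. For distinct $i, j$, $G_i$ and $G_j$ commute because their generators have disjoint support, yielding an internal direct product $H \cong G_1 \times \cdots \times G_m$ inside $G$. Nilpotency of $N$ is then immediate: $N$ sits inside the group of block-upper-unitriangular matrices with identity diagonal blocks, which is a subgroup of the classical nilpotent group of upper-unitriangular matrices in $\GL(n, \Z)$.

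The main step, and the main obstacle, is showing $N$ is normal in $G$. I would verify by direct elementary-matrix computation that for each generator $g = I + E_{c,d}$ of some $G_i$ (with $c, d \in C_i$, $c \neq d$) and each generator $h = I + E_{a,b}$ of $N$ (with $a \geq' b$, $a \not\sim' b$), both $g h g^{-1}$ and $g^{-1} h g$ lie in $N$. Using the rule $E_{i,j} E_{k,l} = \delta_{j,k} E_{i,l}$, only the cases $a = d$ or $b = c$ are nontrivial, and in either case the conjugate differs from $h$ by a factor $(I + E_{c,b})^{\pm 1}$ or $(I + E_{a,d})^{\pm 1}$. In each case, combining $c \sim' d$ transitively with $a \geq' b$ produces the required strict inequality ($c \geq' b$ with $c \not\sim' b$, or $a \geq' d$ with $a \not\sim' d$), so the new elementary matrix is itself an $N$-generator. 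With $N$ normal, $H \cdot N$ is a subgroup containing every generator $(\tau_{a,b})_*$ of $G$ (which lies in $H$ if $a \sim' b$ and in $N$ otherwise), so $G = H \cdot N$; and $H \cap N = \{I\}$ since $H$ is block-diagonal while elements of $N$ have identity diagonal blocks. This yields the semidirect product $G \cong (G_1 \times \cdots \times G_m) \ltimes N$.
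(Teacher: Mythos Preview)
Your proof is correct and follows essentially the same approach as the paper: order the basis so that the $\sim'$--classes form contiguous blocks with strictly dominated classes appearing later, identify each $G_i$ with $\SL(|C_i|,\Z)$ on its diagonal block, observe that $N$ lies in the block-upper-unitriangular group (hence is nilpotent), and verify normality of $N$ by conjugating generators. The only cosmetic differences are that the paper cites Relations~(R3) and~(R4) for the conjugation identities where you carry out the elementary-matrix computation directly, and the paper organizes the normality case split as $d\neq b$ versus $d=b$ rather than your $a=d$ versus $b=c$; you also make the verification $H\cap N=\{I\}$ explicit, which the paper leaves implicit in the block description.
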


\begin{proof}
We can pick an indexing $X=\{x_1,\ldots,x_n\}$ such that if $x_i\geq' x_j$ with $x_i\not\sim' x_j$, then $j> i$.
Taking the image of $X$ in $H_\Gamma$ to be an ordered basis under this indexing, the homology representation takes the transvection $\tau_{x_i,x_j}$ to the elementary matrix $E_{i,j}$.
Then the group $N$ is then a group of upper-triangular unipotent matrices and is therefore nilpotent.

We make a further demand on our indexing of $X$: if $i<j<k$ and $x_i\sim' x_k$, then $x_j\sim' x_i$.
Under such an indexing, the elements of $C_i$ are an unbroken string of elements of $X$, so say $C_i=\{x_{r_i},\ldots,x_{s_i}\}$.
So $G_i$ is generated by the elementary matrices $E_{j,k}$ with $r_i\leq j,k\leq s_i$; in particular, it is an embedded copy of $\SL(|C_i|,\Z)$.

The generators of $G_i$ and $G_j$ commute for $i\neq j$ by Equation~(\ref{eq:R3}), so the subgroup generated by the $\{G_i\}_i$ is a direct product.

It is obvious that $N$ and $G_1,\ldots,G_m$ generate $G$.
Now suppose that $(\tau_{a,b})_*$ is a generator of $N$ and $(\tau_{c,d})_*$ is a generator of one of the $G_i$.
Then $(\tau_{c,d})_*^{-1}(\tau_{a,b})_*(\tau_{c,d})_*$ is an element of $N$;
if $d\neq b$, this follows from Equations~(\ref{eq:R3}) and~(\ref{eq:R4}),
and if $d=b$, then this element is $(\tau_{a,b})_*$ since we are working in $H_\Gamma$.
Since $N$ is normal, we get the decomposition of $G$ in Equation~(\ref{eq:transstruct}).
\end{proof}


\begin{proposition}\label{pr:linpres}
The group $G$ has a presentation in which the generators $S_G$ are the row operations $E_{a,b}=(\tau_{a,b})_*$ for $a, b\in X$ with $a\geq'b$, and with the relations $R_G$ being all relations among the $S_G$ of the following forms:
\begin{enumerate}
\item\label{it:r1}
$[E_{a,b},E_{c,d}]=1$ if $b\neq c$ and $a\neq d$,
\item\label{it:r2}
$[E_{a,b},E_{b,d}]E_{a,d}^{-1}=1$ if $a\neq d$,
\item\label{it:r3}
$(E_{a,b}E_{b,a}^{-1}E_{a,b})^4=1$, if $a\sim' b$ and $a\neq b$,
\item\label{it:r4}
$(E_{a,b}E_{b,a}^{-1}E_{a,b})^2(E_{a,b}E_{b,a}^{-1}E_{a,b}E_{b,a})^{-3}=1$, if $a,b\in C_i$, $a\neq b$, for some $i$ with $|C_i|=2$.
\end{enumerate}
\end{proposition}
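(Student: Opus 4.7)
The plan is to build the natural surjection $\phi\colon \tilde G\to G$ from $\tilde G=\langle S_G\mid R_G\rangle$ and then prove $\phi$ is injective by reconstructing the semidirect-product decomposition of Proposition~\ref{pr:homrepimstruct} inside $\tilde G$. First I would verify that all four families of relations hold in $G$: (\ref{it:r1}) and (\ref{it:r2}) are the standard Steinberg commutator identities for elementary matrices, while (\ref{it:r3}) and (\ref{it:r4}) are routine matrix computations inside the copy of $\SL(2,\Z)$ spanned by $E_{a,b}$ and $E_{b,a}$; in particular, $w=E_{a,b}E_{b,a}^{-1}E_{a,b}$ represents a matrix of order four satisfying $w^2=(wE_{b,a})^3=-I$. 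Since $S_G$ generates $G$ by definition, $\phi$ is surjective.

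For injectivity I would mirror Proposition~\ref{pr:homrepimstruct} inside $\tilde G$. Let $\tilde N\leq\tilde G$ be the subgroup generated by the $E_{a,b}$ with $a\geq'b$ and $a\not\sim'b$, and for each $\sim'$-class $C_i$ let $\tilde G_i\leq\tilde G$ be generated by the $E_{a,b}$ with $a,b\in C_i$. Relation (\ref{it:r1}) immediately gives pairwise commutation of the $\tilde G_i$, and relations (\ref{it:r1}) and (\ref{it:r2}) show that each $\tilde G_i$ normalizes $\tilde N$. To identify each $\tilde G_i$ with $\SL(|C_i|,\Z)$ I would cite a known presentation: for $|C_i|\geq 3$, relations (\ref{it:r1}), (\ref{it:r2}) restricted to $C_i$ together with a single instance of (\ref{it:r3}) give the Steinberg--Milnor presentation of $\SL(|C_i|,\Z)$; for $|C_i|=2$, relations (\ref{it:r3}) and (\ref{it:r4}) can be recognized (via the Tietze transformation adjoining the generator $s=w$) as the classical two-relator presentation $\langle s,t\mid s^4,\ s^2(st)^{-3}\rangle$ of $\SL(2,\Z)$; and for $|C_i|=1$ the subgroup is trivial.

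Finally I would set up a normal form on $\tilde N$ and deduce that $\phi|_{\tilde N}\colon\tilde N\to N$ is an isomorphism. Using the ordering of $X$ from the proof of Proposition~\ref{pr:homrepimstruct}, the generators of $\tilde N$ are indexed by strictly upper-diagonal pairs, and relations (\ref{it:r1}) and (\ref{it:r2}) permit a commutator-collecting argument that rewrites every word in $\tilde N$ as a product of powers of these generators taken in a fixed linear order. Comparing matrix entries in $\GL(n,\Z)$ shows this normal form is unique, so $\phi|_{\tilde N}$ is an isomorphism. Combined with the isomorphisms $\phi|_{\tilde G_i}$ and the semidirect-product structure inside $\tilde G$, this yields an explicit inverse to $\phi$. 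The main obstacle I anticipate is making the Tietze reduction from (\ref{it:r3}), (\ref{it:r4}) to the standard $\SL(2,\Z)$ presentation entirely rigorous when $|C_i|=2$, since the generators $E_{a,b},E_{b,a}$ are not the usual order-four and order-six generators; the $|C_i|\geq 3$ case reduces to a citation of Milnor's presentation, and the $\tilde N$ collection argument is a routine commutator-calculus exercise.
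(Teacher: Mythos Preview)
Your proposal is correct and follows essentially the same route as the paper: both arguments use the semidirect-product decomposition of Proposition~\ref{pr:homrepimstruct}, cite Milnor for the $\SL(n,\Z)$ presentation when $|C_i|\geq 3$ and a classical presentation of $\SL(2,\Z)$ when $|C_i|=2$, handle $N$ via its unipotent structure (the paper phrases this as ``$S_G\cap N$ is a generating set closed under commutators,'' which is exactly your commutator-collecting normal form), and use relations~(\ref{it:r1}) and~(\ref{it:r2}) to assemble the pieces. The paper resolves your anticipated obstacle in the $|C_i|=2$ case by citing Serre~\cite{sertree}, Section~I.4, Example~4.2(c), rather than writing out the Tietze transformation.
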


\begin{proof}
By Proposition~\ref{pr:homrepimstruct}, each $G_i\cong\SL(|C_i|,\Z)$ and $N$ is a nilpotent group.
For each $i$, it follows from classical presentations for $\SL(n,\Z)$ that $G_i$ has a presentation with generators $S_G\cap G_i$ and whose relations are those elements of $R_G$ only involving the generators in $S_G\cap G_i$ (see Corollary 10.3 of Milnor~\cite{mil} for $n\geq 3$ and Example~4.2(c) of Section~I.4 of Serre~\cite{sertree} for $n=2$).
Since $N$ is a unipotent matrix group and $S_G\cap N$ is a generating set for $N$ that is closed under taking commutators, we know that $N$ has a presentation with generators $S_G\cap N$ and whose relations are those elements of $R_G$ only involving generators in $S_G\cap N$.

Relation~(\ref{it:r1}) implies that the group generated by $S_G\cap (G_1\cup\cdots\cup G_m)$ subject to these relations is isomorphic to the product $G_1\times\cdots \times G_m$, and Relation~(\ref{it:r1}) and Relation~(\ref{it:r2}) encode the semi-direct product action of $G_1\times\cdots\times G_m$ on $N$, so that the group $\langle S_G|R_G\rangle\cong G$.
\end{proof}

\begin{corollary}\label{co:tvgenstruct}
Suppose $\tilde G$ is a subgroup of $\AAG$ generated by a set of dominated transvections.
Then there is a relation $\leq'$ such that the image of $\tilde G$ under the homology representation is the group $G$ generated by $\{(\tau_{a,b})_*|\text{$a\geq'b$}\}$ and the conclusions of Proposition~\ref{pr:homrepimstruct} and Proposition~\ref{pr:linpres} hold for $G$.
In particular, the group $\langle \Omega_s\rangle$ has a decomposition of the form in Equation~(\ref{eq:transstruct}) and a presentation of the form given in Proposition~\ref{pr:linpres}.
\end{corollary}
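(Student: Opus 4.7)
The plan is to analyze $\tilde G$ through its image under the homology representation, reduce to Propositions~\ref{pr:homrepimstruct} and~\ref{pr:linpres}, and then transfer the structural information back to $\langle \Omega_s\rangle$ using \partref{it:srinj}.

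First I define $\leq'$ from the given generating set of $\tilde G$: declare $\pg{y} \leq_0 \pg{x}$ whenever $\tau_{x,y}$ lies in the prescribed generating set, and let $\leq'$ be the reflexive--transitive closure of $\leq_0$. Each step of $\leq_0$ satisfies $\pg{y} \leq \pg{x}$ by the definition of a dominated transvection, and $\leq$ is transitive, so $\leq'$ refines $\leq$ and meets the hypothesis of Proposition~\ref{pr:homrepimstruct}.

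Next I verify that the image of $\tilde G$ under the homology representation equals $G = \langle (\tau_{a,b})_* \mid a \geq' b\rangle$. The inclusion $\img(\tilde G) \subseteq G$ is immediate, since each generating transvection of $\tilde G$ contributes a generator of $G$. For the reverse, given a chain witnessing $\pg{y} \leq' \pg{x}$, the identity $[E_{i,j},E_{j,k}] = E_{i,k}$ in $\GL(n,\Z)$ (valid whenever $i \neq k$) inductively places each $E_{\pg{x},\pg{y}} = (\tau_{x,y})_*$ in the image; the slight bookkeeping required when a generator involves letters of $X^{-1}$ only changes an elementary matrix to its inverse and does not alter the generated subgroup. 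Once $\img(\tilde G) = G$, Propositions~\ref{pr:homrepimstruct} and~\ref{pr:linpres} apply directly.

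For the ``in particular'' statement, I show $\langle \Omega_s\rangle = \langle T\rangle$, where $T$ is the set of adjacent dominated transvections $\tau_{x,y}$ (those with $x \geq y$ and $\adj{x}{y}$). Each $\tau_{x,y} \in T$ is the short-range Whitehead automorphism $(\{x,y\},x)$, so $T \subseteq \Omega_s$. Conversely, any $\beta = (A,a) \in \Omega_s$ has $A \subseteq \stl{a}$, so every non-multiplier letter in $A$ commutes with $a$; relation~(\ref{eq:R2}) then iteratively decomposes $\beta$ into single-letter factors $(\{a,x^{\pm1}\},a)$, each of which coincides (using commutativity with $a$) with some $\tau_{a^{\pm1},x} \in T$ or is trivial. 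The first half of the corollary then applies with $\tilde G = \langle \Omega_s\rangle$, and because the homology representation is injective on $\langle \Omega_s\rangle$ by \partref{it:srinj}, the decomposition and presentation of $G$ lift isomorphically to $\langle \Omega_s\rangle$ itself. The only point to keep straight, rather than a genuine obstacle, is the translation between the relational level (closing $\leq_0$ under transitivity on $X$) and the subgroup level (generating matrices by iterated commutators); the identity $[E_{i,j},E_{j,k}]=E_{i,k}$ is precisely what makes these operations match.
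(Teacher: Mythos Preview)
Your proof is correct and follows essentially the same approach as the paper's: define $\leq'$ as the reflexive--transitive closure of the relation coming from the generating transvections, identify the homology image with $G$, invoke the two propositions, and for $\langle\Omega_s\rangle$ use injectivity of the homology representation to pull the structure back. You are in fact more careful than the paper in two places: you explicitly justify the equality $\img(\tilde G)=G$ via the elementary-matrix commutator identity (the paper simply asserts it), and you verify that $\langle\Omega_s\rangle$ is generated by a set of dominated transvections by decomposing each short-range Whitehead automorphism via~(\ref{eq:R2}) (the paper writes ``if $S=\Omega_s$'' even though $\Omega_s$ is not literally a set of transvections, leaving this step implicit).
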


\begin{proof}
Let $S$ be a set of dominated transvections such that $\tilde G=\langle S\rangle$.
Let $\leq'$ be the reflexive relation that is the transitive closure of $\leq''$, where $a\geq''b$ whenever $\tau_{a,b}\in S$.
Then $\tilde G$ is generated by $\{\tau_{a,b}|\text{$a\geq'b$}\}$, and its image under the homology representation is the group $G$ generated by $\{(\tau_{a,b})_*|\text{$a\geq'b$}\}$.
Then we can apply Proposition~\ref{pr:homrepimstruct} and Proposition~\ref{pr:linpres}.
If $S=\Omega_s$, then by \partref{it:srinj}, the homology representation restricted to $\langle \Omega_s\rangle=\tilde G$ maps isomorphically to $G$.
So the conclusions of Proposition~\ref{pr:homrepimstruct} and Proposition~\ref{pr:linpres} apply to $\langle \Omega_s\rangle$, where $\leq'$ is adjacent domination.
\end{proof}

\subsection{Peak-reducing products of long-range automorphisms}
The goal of this subsection is to prove \partref{it:lrwhalg}.
Our proof 
is similar to the proof of the peak reduction theorem for free groups given in Higgins--Lyndon~\cite{hl}. 

Let $k\geq 1$.
For a $k$--tuple $W=(w_1,\ldots,w_k)$ of cyclic words, we denote the $k$--tuple of conjugacy classes by $[W]=([w_1],\ldots,[w_k])$.
We proceed by putting the definition of peak reduction from the introduction in context.

\begin{definition}
Suppose $\alpha,\beta \in\Omega$ and $[W]$ is a $k$--tuple of conjugacy classes in $A_\Gamma$.
Then $\beta\alpha$, the word of length $2$, is called a \emph{peak} with respect to $[W]$ if 
\[|\alpha\cdot [W]|\geq |[W]|\]
\[|\alpha\cdot [W]|\geq |\beta\alpha\cdot [W]|\]
and at least one of these inequalities is strict.

Suppose $\gamma\in\AAG$ and we have a factorization $\gamma=\alpha_k\cdots\alpha_1$
with $\alpha_1,\ldots,\alpha_k$ in $\Omega$.
We say $\alpha_i$ is a \emph{peak} of this factorization, with respect to $[W]$, if
$1<i<k$ and $\alpha_{i+1}\alpha_i$ is a peak with respect to $(\alpha_{i-1}\cdots\alpha_1)\cdot [W]$.
The $\emph{height}$ of a peak $\alpha_i$ is simply $|(\alpha_i\cdots\alpha_1)\cdot [W]|$.
\end{definition}

Then the factorization $\gamma=\alpha_k\cdots\alpha_1$ is peak-reduced with respect to $[W]$ (as defined in the introduction) if and only if it has no peaks with respect to $[W]$.

It is important to note that for general right-angled Artin groups, the automorphism group $\AAG$ does not act on the set of graphically reduced words; rather it only acts on the set of group elements.
This means we need to take care to distinguish words from the elements they represent.
These measures were unnecessary in the original proof for free groups, since for a free group the set of reduced words is the set of group elements.

\begin{definition}\label{de:nicerep}
If $(A,a)\in\Omega_\ell$ and $w$ is a graphically reduced cyclic word, define the \emph{obvious representative} of $(A,a)([w])$ based on $w$ to be the cyclic word $w'$ gotten from $w$ by the following replacements:
\begin{itemize}
\item for every subsegment of $w$ of the form $buc^{-1}$ or $cub^{-1}$, with $u$ any word in $\lkl{a}$, $b\in A-a$ and $c\in L-A-\lkl{a}$, replace this subsegment with $bauc^{-1}$ or $cua^{-1}b^{-1}$ respectively in $w'$, and
\item for every subsegment of $w$ of the form $bua^{-1}$ or $aub^{-1}$, with $u$ any word in $\lkl{a}$ and $b\in A-a$, replace this subsegment with $bu$ or $ub^{-1}$ respectively in $w'$.
\end{itemize} 

The \emph{obvious representative} of $(A,a)\cdot [W]$ based on $W$ is the $k$--tuple $(w'_1,\ldots,w'_k)$, where each $w'_i$ is the obvious representative of $(A,a)([w_i])$ based on $w_i$.
\end{definition}

\begin{claim}The obvious representative $w'$ of $(A,a)([w])$ based on $w$ is a graphically reduced representative of $(A,a)([w])$.
\end{claim}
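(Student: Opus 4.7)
The plan is to verify two properties of $w'$: that it represents the conjugacy class $(A,a)([w])$, and that it is graphically reduced.

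For the first property, I would compare $w'$ with the naive cyclic word $\tilde w$ obtained by concatenating, for each letter $b$ of $w$, the image word $(A,a)(b)$. Since $(A,a)$ is a homomorphism, $\tilde w$ represents $(A,a)([w])$. Then $w'$ can be obtained from $\tilde w$ by a sequence of moves preserving the cyclic conjugacy class: commuting $a^{\pm 1}$ past letters of $\lkl{a}$ (valid since $a$ commutes with every element of $\lkl{a}$), and cancelling adjacent $aa^{-1}$ or $a^{-1}a$ pairs (possibly across the cyclic boundary). The four replacement rules in Definition~\ref{de:nicerep} describe exactly the outcome: Rule~1 and Rule~$1'$ place the $a^{\pm 1}$ produced by a letter $b$ with $b \in A - a$ (resp.\ $b^{-1} \in A - a^{-1}$) adjacent to $b$ after sliding it past any intervening letters of $\lkl{a}$, and Rule~2 and Rule~$2'$ account for cancellations between a produced $a^{\pm 1}$ and a literal letter $a^{\mp 1}$ of $w$.

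For graphical reducibility, I would argue by contradiction: suppose $w'$ has a bad subsegment $xvx^{-1}$ with $v$ a word in $\lkl{x}$, and split into cases. If $x \in \{a, a^{-1}\}$, then since $w$ is graphically reduced, the segment must involve inserted $a^{\pm 1}$'s from Rule~1 or Rule~$1'$; but each $a$ inserted by Rule~1 is followed (after a word in $\lkl{a}$) by a letter $c^{-1}$ with $c \in L - A - \lkl{a}$, and since $c^{-1} \notin \lkl{a}$, no bad subsegment $ava^{-1}$ with $v \in \lkl{a}^*$ can begin at this inserted $a$; Rule~$1'$ is symmetric. For $x \notin \{a, a^{-1}\}$, the endpoints $x, x^{-1}$ of the bad subsegment come from $w$, and $v$ differs from the corresponding substring $v'$ of $w$ by insertions and removals of $a^{\pm 1}$'s only. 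When $a \notin \lkl{x}$, insertions would violate $v \in \lkl{x}^*$ so none occur; using that $\lk(\pg{b}) \subset \st(\pg{a})$ for each $b \in A - a$ with $b^{-1} \notin A$ forces $b \notin \lkl{x}$, I would show that no removals occur either, so $v = v' \in \lkl{x}^*$ contradicting $w$'s reducibility. When $a \in \lkl{x}$, then $x \in \lkl{a}$ and every inserted or removed $a^{\pm 1}$ lies in $\lkl{x}$, so $v' \in \lkl{x}^*$ follows directly and yields the same contradiction.

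The main obstacle will be the subcase where a letter $b \in A - a$ also has $b^{-1} \in A$ (so $(A,a)(b) = a^{-1} b a$): here Lemma~\ref{le:whdef} gives only a connected-component condition rather than the inequality $a \geq b$, so one must argue more delicately about when the inserted $a^{\pm 1}$'s from such a letter can cancel with those from its neighbors in $w$. I would handle this by a careful letter-by-letter description of $w'$ based on the type of each letter in $w$ and its neighbors, and then verify the absence of forbidden subsegments directly from this explicit description.
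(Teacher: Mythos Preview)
Your overall strategy matches the paper's: verify that $w'$ represents $(A,a)([w])$ by comparing with the naive image word, then rule out bad subsegments $xvx^{-1}$ by case analysis. The paper organizes the second part slightly differently (by whether an insertion or a removal caused the failure, rather than by the nature of $x$), but the substance is close.

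There is one genuine gap. In the conjugation subcase (some $b\in A-a$ with $b^{-1}\in A$ appearing in or bounding the offending segment), you say only that you would ``handle this by a careful letter-by-letter description.'' This is where the real content lies, and the paper's argument is not a brute enumeration: it uses Lemma~\ref{le:whdef} to observe that when $a\not\geq b$, the entire connected component $Y$ of $\pg{b}$ in $\Gamma-\st(\pg{a})$ is conjugated by $(A,a)$. Since the bad segment $dv'd^{-1}$ has $d\notin\stl{a}$ and $\pg{b}$ is either $\pg{d}$ or adjacent to it, one deduces that $\pg{d}\in Y$ and hence every letter of $\supp(dv'd^{-1})$ lies in $Y\cup\st(\pg{a})$. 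Once every non-$\lkl{a}$ letter of the segment belongs to the conjugated component, the inserted $a$ after each such letter cancels with the inserted $a^{-1}$ before the next one, so the passage from $v$ to the segment in $w'$ makes no net change---contradicting the assumption that a removal created the bad segment. Without this component argument (or an equivalent), a letter-by-letter check does not close the case.

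There is also a smaller gap in your $a\notin\lkl{x}$, $a\geq b$ subcase. Your domination argument shows such a $b$ cannot sit strictly inside $v$, but it does not address the possibility $b=x$ (or $b=x^{-1}$), i.e.\ the removal happens at the boundary. The paper treats this separately: if $\pg{b}=\pg{d}$ then $a\geq d$ forces $\lk(\pg{d})\subset\st(\pg{a})$, so every letter of $v$ lies in $\stl{a}$, and one checks directly that the substitutions at the two ends of the segment do not produce a word in $\lkl{d}$ between $d$ and $d^{-1}$. You should supply this boundary analysis explicitly.
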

\begin{proof}
First we show that $w'$ is graphically reduced.
Note that we have only added or removed instances of $a^{\pm1}$.
Since $w$ is graphically reduced, $w'$ can only fail to be graphically reduced on subsegments where we added or removed instances of $a^{\pm1}$.
Those replacements that introduce an instance of $a^{\pm1}$ introduce it in a way where it cannot cancel ($a$ does not commute with $b\in A-a$ or with $c\in L-A-\lkl{a}$).
Suppose a replacement that removes an instance of $a^{\pm1}$ results in $w'$ not being graphically reduced.
Then we have a subsegment $dvd^{-1}$ of $w$ being replaced by $dv'd^{-1}$ in $w'$, where $d\in L-\stl{a}$, $v'$ is a word in $\lkl{d}$, and $v$ is $v'$ with some instances $a$ or $a^{-1}$ inserted. 
Then $dvd^{-1}$ contains an instance of some $b\in (A-a)^{\pm1}$.
If $a\not\geq b$, then $\pg{b}$ is in a component of $\Gamma-\st(\pg{a})$ that is conjugated by $(A,a)$.
It follows that for each $x$ in $\supp dv'd^{-1}$, either $x\in\stl{a}$ or $\pg{x}$ is in the same component of $\Gamma-\st(\pg{a})$ as $\pg{b}$.
Then $v=v'$, a contradiction.
So suppose $a\geq b$.
If $\pg{b}\neq\pg{d}$,
then $d$ commutes with $a$, a contradiction.
If $\pg{b}=\pg{d}$, then $\supp v\subset\st(\pg{a})$, and our substitutions never would have removed instances of $a^{\pm1}$ from $v$.
So $w'$ is graphically reduced.

Observe that $w'$ represents $(A,a)([w])$: because $(A,a)$ is long-range, it has no effect on the subsegments of $w$ that are words in $\lkl{a}$, and the substitutions in the definition of $w'$ capture all those changes that $(A,a)$ makes to $w$ that do not cancel each other out. 
\end{proof}

\begin{definition}\label{de:adjco}
Let $w$ be a graphically reduced cyclic word and let $a\in L$.
Then for $b,c\in L-\lkl{a}$, we define the \emph{adjacency counter} of $w$ relative to $a$, written as
$\aco{b}{c}{w,a}$,
to be the number of subsegments of $w$ of the form $(buc^{-1})^{\pm1}$, where $u$ is any (possibly empty) word in $\lkl{a}$.

For a $k$--tuple of graphically reduced cyclic words $W=(w_1,\ldots,w_k)$, define the adjacency counter of $W$ relative to $a$ as:
\[\aco{b}{c}{W,a}=\sum_{i=1}^k\aco{b}{c}{w_i,a}\]
For $B,C\subset L$, we define:
\[\aco{B}{C}{W,a}=\sum_{b\in (B-\lkl{a})}\sum_{c\in (C-\lkl{a})}\aco{b}{c}{W,a}\]
For $\alpha\in\AAG$, we define:
\[D_{[W]}(\alpha)=|\alpha\cdot [W]|-|[W]|\]
When $W$ is clear, we leave it out, writing $\aco{B}{C}{a}$ and $D(\alpha)$.
\end{definition}

With $W$ and $a$ as above, note that for any $B,C\subset L$, the number $\aco{B}{C}{a}\geq 0$.
Further, we have
$\aco{B}{C}{a}=\aco{C}{B}{a}$.
If $D\subset L$ with $D\cap C=\emptyset$, then we have:
\[\aco{B}{C+D}{a}=\aco{B}{C}{a}+\aco{B}{D}{a}\]
Also note that $\aco{a}{a}{a}=0$ (since each $w_i$ is graphically reduced).

\begin{lemma}\label{le:obvadj}
If $W$ is a $k$--tuple of graphically reduced cyclic words, $(A,a)\in\Omega_\ell$, and $W'$ is the obvious representative of $(A,a)\cdot [W]$, then:
\[D_{[W]}((A,a))=|W'|-|W|=\aco{A-a}{L-A}{W,a}-\aco{a}{A-a}{W,a}\]
\end{lemma}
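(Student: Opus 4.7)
The plan is to verify the two equalities $D_{[W]}((A,a)) = |W'| - |W|$ and $|W'| - |W| = \aco{A-a}{L-A}{W,a} - \aco{a}{A-a}{W,a}$ in turn.

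For the first equality, each $w_i$ in $W$ is a graphically reduced cyclic word, so $|w_i| = |[w_i]|$ and hence $|W| = |[W]|$. By the claim immediately preceding this lemma, each $w'_i$ is a graphically reduced representative of $(A,a) \cdot [w_i]$, so likewise $|W'| = |(A,a) \cdot [W]|$; subtraction gives $D_{[W]}((A,a)) = |W'| - |W|$.

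For the second equality, fix one component and decompose $w = w_i$ cyclically as $x_1 u_1 x_2 u_2 \cdots x_m u_m$, where each $x_j \in L - \lkl{a}$ is a \emph{boundary letter} and each $u_j$ is a (possibly empty) word in $\lkl{a}$. Since $(A,a) \in \Omega_\ell$ fixes $\lkl{a}$ pointwise, we have $(A,a)(x_j) = \ell(x_j)\, x_j\, r(x_j)$, where $r(x_j) = a$ if and only if $x_j \in A - a$, and $\ell(x_j) = a^{-1}$ if and only if $x_j^{-1} \in A - a$ (both trivial when $x_j \in \{a, a^{-1}\}$). At each gap $(x_j, x_{j+1})$, the word $r(x_j)\, u_j\, \ell(x_{j+1})$ collapses to $u_j$ when $r(x_j) = a$ and $\ell(x_{j+1}) = a^{-1}$, using that $u_j$ commutes with $a$; in the further subcases $x_j = a$ or $x_{j+1} = a^{-1}$, the single inserted $a^{\pm 1}$ is absorbed by the original boundary endpoint, deleting it. Now classify each gap by which of the four replacement patterns in Definition~\ref{de:nicerep} matches. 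The two lengthening patterns correspond exactly to subsegments $buc^{-1}$ or $cub^{-1}$ with $b \in A-a$ and $c \in L-A-\lkl{a}$, and their count across $W$ is $\aco{A-a}{L-A}{W,a}$, using the symmetry $\aco{b}{c}{w,a} = \aco{c}{b}{w,a}$ and the fact that $A \cap \lkl{a} = \emptyset$ makes the $\lkl{a}$-exclusion in Definition~\ref{de:adjco} vacuous on $A - a$. The two shortening patterns correspond to subsegments $bua^{-1}$ or $aub^{-1}$ with $b \in A-a$, totaling $\aco{a}{A-a}{W,a}$. All remaining gaps contribute zero length change, since either both $r$ and $\ell$ are trivial, or $r(x_j) = a$ and $\ell(x_{j+1}) = a^{-1}$ strictly cancel through $u_j$. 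Subtracting gives the formula.

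The main bookkeeping obstacle will be verifying that the four rule classes are pairwise disjoint (so no gap is counted twice) and that, together with the two trivial cases, they exhaust all gaps---especially that the shortening rules correctly capture the edge cases $x_j = a$ or $x_{j+1} = a^{-1}$, and that $a \in A$ prevents any overlap between the lengthening and shortening rules.
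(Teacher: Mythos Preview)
Your proposal is correct and follows essentially the same approach as the paper: count the letters added and removed in passing from $W$ to $W'$ according to the replacement rules in Definition~\ref{de:nicerep}. The paper's own proof is a single sentence (``This is immediate from counting the letters removed and added in the definition of $W'$''), so your explicit decomposition into boundary letters and gaps, together with your verification that the four rule classes are disjoint and exhaustive, is a careful unpacking of what the paper leaves to the reader rather than a different argument.
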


\begin{proof}
This is immediate from counting the letters removed and added in the definition of $W'$.
\end{proof}

Note that if $W'$ and $W$ are different $k$-tuples of graphically reduced cyclic words representing the same $k$-tuple of conjugacy classes, we may have different adjacency counters with respect to $W$ and $W'$.
However, the function $D_{[W]}$ depends only on $[W]$.

\begin{lemma}\label{le:dico}
Let $W$ be a $k$--tuple of graphically reduced cyclic words.
If $(A,a)\in\Omega_\ell$, then
\[D_{[W]}((A,a))=\aco{A}{L-A}{W,a}-\aco{a}{L}{W,a}\]
\end{lemma}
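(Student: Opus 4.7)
The plan is to deduce this from Lemma~\ref{le:obvadj} by purely formal manipulation of the adjacency counter, using three properties collected from Definition~\ref{de:adjco}: additivity in each argument over disjoint unions of subsets of $L$, the fact that $\aco{a}{a}{W,a}=0$ (since each $w_i$ is graphically reduced, so it has no subsegment of the form $aua^{-1}$ with $u\in\langle\lkl{a}\rangle$), and the fact that subsets of $\lkl{a}$ contribute nothing to the counter.

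First I would decompose the right-hand side of the claimed identity. Since $a\in A$, we have the disjoint union $A=(A-a)+\{a\}$, so additivity in the first argument gives
\[
\aco{A}{L-A}{W,a} = \aco{A-a}{L-A}{W,a} + \aco{a}{L-A}{W,a}.
\]
Similarly, $L=(A-a)+\{a\}+(L-A)$ as a disjoint union, so additivity in the second argument yields
\[
\aco{a}{L}{W,a} = \aco{a}{A-a}{W,a} + \aco{a}{a}{W,a} + \aco{a}{L-A}{W,a},
\]
and the middle term vanishes by the graphical reducedness of $W$.

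Subtracting, the $\aco{a}{L-A}{W,a}$ terms cancel, leaving exactly
\[
\aco{A}{L-A}{W,a}-\aco{a}{L}{W,a} = \aco{A-a}{L-A}{W,a}-\aco{a}{A-a}{W,a},
\]
which is the formula for $D_{[W]}((A,a))$ supplied by Lemma~\ref{le:obvadj}. There is no real obstacle here: the statement is just a convenient rewriting of Lemma~\ref{le:obvadj} that replaces $A-a$ by $A$ on the left and $A-a$ by the whole of $L$ on the right, which is possible precisely because the ``diagonal'' term $\aco{a}{a}{W,a}$ is zero. I would present the calculation in two or three lines and remark that the reformulation is useful in what follows because the quantity $\aco{a}{L}{W,a}$ depends only on $a$ (and $W$), not on $A$.
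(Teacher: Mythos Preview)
Your proof is correct and is essentially identical to the paper's own argument: both use additivity of the adjacency counter to split off $\{a\}$ from $A$ in the first argument and to decompose $L=(A-a)+\{a\}+(L-A)$ in the second, then invoke $\aco{a}{a}{W,a}=0$ and Lemma~\ref{le:obvadj}. The only difference is cosmetic: the paper starts from Lemma~\ref{le:obvadj} and rewrites forward, while you start from the claimed right-hand side and reduce to Lemma~\ref{le:obvadj}. (Note that you list ``subsets of $\lkl{a}$ contribute nothing'' among the needed facts, but you never actually use it in the computation---it is already built into the definition of $\aco{-}{-}{W,a}$.)
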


\begin{proof}
From Lemma~\ref{le:obvadj}:
\[
\begin{split}
D((A,a))&=\aco{A-a}{L-A}{a}-\aco{a}{A-a}{a}\\
&=\aco{A}{L-A}{a}
-(\aco{a}{L-A}{a}+\aco{a}{A-a}{a}+\aco{a}{a}{a})\\
&
=\aco{A}{L-A}{a}-\aco{a}{L}{a}
\end{split}
\] 
\end{proof}

The following lemma is the machine that makes peak reduction possible.
This is an extension of a parallel lemma for free groups that appears in Higgins--Lyndon~\cite{hl}.

\begin{lemma}\label{le:machine}
Suppose $\alpha,\beta\in \Omega_\ell$ and $[W]$ is a $k$--tuple of conjugacy classes of $A_\Gamma$.
If $\beta\alpha^{-1}$ forms a peak with respect to $[W]$, there exist $\delta_1,\ldots,\delta_k\in \Omega_\ell$ such that
$\beta\alpha^{-1}=\delta_k\cdots\delta_1$
and for each $i,1\leq i< k$, we have:
\[|(\delta_i\cdots\delta_1)\cdot[W]|<|\alpha^{-1}\cdot [W]|\]
\end{lemma}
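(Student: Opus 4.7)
The plan is to adapt the Higgins--Lyndon~\cite{hl} peak-reduction argument from free groups to right-angled Artin groups, using the adjacency counter formula of Lemma~\ref{le:dico} to control lengths in a graph-sensitive way.

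First I would reduce to the case where both $\alpha$ and $\beta$ are of type~(2). A type~(1) Whitehead automorphism permutes $L$ and so preserves $|\cdot|$, and by (R6) it commutes past any type~(2) Whitehead automorphism after relabeling the subset; thus a type~(1) factor can be shifted to one end of the proposed factorization without affecting any intermediate height, and if both $\alpha$ and $\beta$ are type~(1) then $\beta\alpha^{-1}$ preserves length and cannot form a peak. So I may write $\alpha^{-1} = (A,a)$ and $\beta = (B,b)$, both type~(2) long-range (so $A \cap \lkl{a} = \emptyset$ and $B \cap \lkl{b} = \emptyset$).

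Next I would split into cases based on the relative position of $a$ and $b$ in $\Gamma$. If $\pg{a} = \pg{b}$, then (R1) and (R2) collapse $\beta\alpha^{-1}$ to a product of at most two long-range Whitehead automorphisms with shared multiplier, and the factorization follows by splitting $A \cup B$ into pieces via (R2) and checking heights with Lemma~\ref{le:dico}. If $b \in \lkl{a}$, the long-range hypotheses force $a, a^{-1} \notin B$ and $b, b^{-1} \notin A$, so (R3b) makes $\alpha^{-1}$ and $\beta$ commute, and splitting either factor via (R2) into pieces with dominant height-decreasing behavior yields the bound. The main case is $b \notin \stl{a}$: I partition $A$ into pieces $A = A_1 \sqcup A_2 \sqcup \cdots$ according to each element's interaction with $\{b, b^{-1}\}$ and $B$, split $\alpha^{-1}$ via (R2) into the corresponding product of Whitehead automorphisms $(A_j, a)$, and interleave the pieces with $\beta$ using (R3a), (R4a), and (R8) (the last to rewrite $\alpha^{-1}$ using an inner-automorphism reformulation when $b \in A$).

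The central computation in every case is to use Lemma~\ref{le:dico} to express $D_{[W]}(\alpha^{-1})$ and $D_{\alpha^{-1}\cdot[W]}(\beta)$ as adjacency-counter sums. The peak hypothesis (that $D_{[W]}(\alpha^{-1}) \geq 0$ and $D_{\alpha^{-1}\cdot[W]}(\beta) \leq 0$, with strict inequality in at least one) becomes an inequality among these counters that lets me allocate the change in length to the individual pieces and verify that each intermediate product $\delta_i\cdots\delta_1$ has $|\delta_i\cdots\delta_1\cdot[W]| < |\alpha^{-1}\cdot[W]|$. The main obstacle will be the bookkeeping in the last case: adjacency counters (Definition~\ref{de:adjco}) are computed on graphically reduced representatives via the obvious representatives of Definition~\ref{de:nicerep}, so I must track how each piece $(A_j, a)$ acts on obvious representatives and check that the resulting cyclic words remain graphically reduced so the counter formula still applies at the next step. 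I must also verify via Lemma~\ref{le:whdef} that each $(A_j, a)$ is a genuine well-defined long-range Whitehead automorphism of $A_\Gamma$, which is not automatic because the graph constraints of Lemma~\ref{le:whdef}(1) have no counterpart in the free-group case.
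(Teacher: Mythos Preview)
Your overall framework (Higgins--Lyndon case analysis, adjacency counters via Lemma~\ref{le:dico}, the peak inequality $D(\alpha)+D(\beta)<0$) is right, and your handling of the type-(1) case and the adjacent case $b\in\lkl{a}$ matches the paper. But the plan has two genuine gaps.

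First, you never treat the case where $a$ and $b$ are non-adjacent but domination-equivalent ($a\sim b$, $\lkl{a}=\lkl{b}$). This arises, for instance, when $A\cap B=\emptyset$ with $a^{-1}\in B$ and $b^{-1}\in A$: by Lemma~\ref{le:whdef} we get $a\geq b$ and $b\geq a$, and none of (R2), (R3), (R4) apply. The paper handles this by introducing the auxiliary automorphisms $(A,b^{-1})$ and $(B,a^{-1})$ and invoking (R5), which brings in the type-(1) element $\sigma_{a,b}$. Your relation list (R2), (R3a/b), (R4a/b), (R8) omits (R5) entirely, so your case analysis cannot close here.

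Second, your main-case strategy (partition $A$ into pieces $A_j$ and interleave with $\beta$) is not what makes the argument go through, and as stated it does not produce the strict height bound. The paper's approach in the overlapping case $A\cap B\neq\emptyset$ is different: after using the complement freedom $(A,a)\leftrightarrow(L-A-\lkl{a},a^{-1})$ and $\alpha\leftrightarrow\beta$ to normalize the positions of $a^{\pm1},b^{\pm1}$, one proves via an adjacency-counter inequality that a \emph{single} intersection automorphism such as $(A\cap B',a)$ strictly shortens $\alpha^{-1}\cdot[W]$. One then factors $\alpha=(A-B'+a,a)(A\cap B',a)$ via (R2), so that the residual peak $\beta(A\cap B',a)^{-1}$ now lies in the disjoint case $A\cap B=\emptyset$, which was already handled. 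The crucial content is the inequality
\[
0>D(\alpha)+D(\beta)\geq D((A\cap B',a))+D((B\cap A',b)),
\]
established by comparing counters $\aco{-}{-}{a}$ and $\aco{-}{-}{b}$ (and, when $a\sim b$, a four-term version with $\gamma_1,\ldots,\gamma_4$). Your ``allocate the change in length to the individual pieces'' does not isolate this inequality, and without it there is no reason your interleaved factorization stays strictly below $|\alpha^{-1}\cdot[W]|$ at every step. You also underuse the complement trick: the paper invokes it systematically to reduce to the two sub-cases ($a\notin B,\,b\notin A$) and ($a\notin B,\,a^{-1}\in B,\,b\in A,\,b^{-1}\notin A$), not just once when $b\in A$.
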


A factorization of $\beta\alpha^{-1}$ is \emph{peak-lowering} if it satisfies the conclusions of the lemma, so Lemma~\ref{le:machine} states that every peak has a peak-lowering factorization.
Such a factorization might not be peak-reduced, but the height of its highest peak is lower than the height of the peak in $\beta\alpha^{-1}$. 
We postpone the proof of Lemma~\ref{le:machine} to show how it implies \partref{it:lrwhalg}.

\begin{proof}[Proof of \partref{it:lrwhalg}]
Let $\gamma\in\langle\Omega_\ell\rangle$ and write
$\gamma=\alpha_k\cdots\alpha_1$
with $\alpha_1,\ldots\alpha_k\in\Omega_\ell$.
Let $h$ be
\[h=\sup \left\{|(\alpha_i\cdots\alpha_1)\cdot [W]|\big|\mbox{$\alpha_i$ is a peak} \right\}\]
which is the height of the highest peak in the factorization, 
and let $m$ be the number of maximal-height steps between peaks:
\[m=\Big|\left\{i\big|h=|(\alpha_i\cdots\alpha_1)\cdot [W]|\mbox{ and $\alpha_i$ is a peak or between two peaks}\right\}\Big|\]

If the factorization is not peak-reduced, then there is a peak $\alpha_i$ of maximal height.
Apply Lemma~\ref{le:machine} to the peak
$\alpha_{i+1}\alpha_i$ with respect to $(\alpha_{i-1}\cdots\gamma_1)\cdot W$ to get 
\[\alpha_{i+1}\alpha_i=\delta_j\cdots\delta_1\]
satisfying the conclusions of the lemma, and therefore a new factorization of $\gamma$:
\[\alpha_k\cdots\alpha_{i+2}\delta_j\cdots\delta_1\alpha_{i-1}\cdots\alpha_1\]

If $m$ of the old factorization was not 1, then $m$ of the new factorization is one less.
If $m$ of the old factorization was 1, then $h$ of the new factorization is strictly lower than $h$ of the old factorization.
By repeating this process, we eventually obtain a factorization with $h<1$.
This can only mean that there are no peaks, so we have a factorization which is peak-reduced.
\end{proof}

\begin{sublemma}
Let $\alpha$, $\beta$, and $[W]$ be as in Lemma~\ref{le:machine}. 
Then we have:
\begin{equation}\label{eq:pkineq}
2|\alpha^{-1}\cdot [W]|> |[W]|+|\beta\alpha^{-1}\cdot[W]|
\end{equation}
\end{sublemma}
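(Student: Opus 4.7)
The plan is to observe that this sublemma is essentially a direct restatement of the definition of a peak, obtained by summing the two defining inequalities.

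By hypothesis, $\beta\alpha^{-1}$ forms a peak with respect to $[W]$. Taking the definition of a peak and substituting $\alpha^{-1}$ for $\alpha$, this means that the two inequalities
\[|\alpha^{-1}\cdot [W]|\geq |[W]|\qquad\text{and}\qquad |\alpha^{-1}\cdot[W]|\geq |\beta\alpha^{-1}\cdot [W]|\]
both hold, with at least one being strict. Adding these inequalities yields
\[2|\alpha^{-1}\cdot [W]| \geq |[W]| + |\beta\alpha^{-1}\cdot [W]|,\]
and since at least one of the summands is strict, the combined inequality is strict as well. This is exactly \eqref{eq:pkineq}, so there is no real obstacle here; the entire content of the sublemma is just the arithmetic observation that if $x\ge y$ and $x\ge z$ with at least one inequality strict, then $2x > y + z$.
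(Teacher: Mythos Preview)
Your proof is correct and matches the paper's argument exactly: the paper likewise sums the two inequalities from the definition of a peak and uses the strictness of at least one to obtain the strict combined inequality.
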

\begin{proof}
Since $\beta\alpha^{-1}$ is a peak with respect to $[W]$, we can sum the two inequalities in the definition of a peak; by the fact that one of them is strict, we obtain this new inequality.
\end{proof}

\begin{sublemma}\label{sl:notnear}
Suppose we have $(A,a),(B,b)\in\Omega_\ell$ with $a\notin B$ and $\pg{a}$ not adjacent to $\pg{b}$ in $\Gamma$ (possibly $a=b^{-1}$).
Then $\lkl{a}\cap B=\emptyset$.
\end{sublemma}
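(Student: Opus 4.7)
The plan is to argue by contradiction: suppose there exists $x\in\lkl{a}\cap B$ and derive a violation of the definitions. The key extra fact I will exploit is that $(B,b)\in\Omega_\ell$ means (by the convention stated just after the definition of $\Omega_\ell$) that $B\cap\lkl{b}=\emptyset$; in particular $\pg{x}$ is not adjacent to $\pg{b}$. The proof then divides into the trivial ``same vertex'' case and two substantive cases corresponding to the two conditions of Lemma~\ref{le:whdef}.

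First I would dispose of the case $\pg{a}=\pg{b}$. Because $a\notin B$ while $b\in B$, this case forces $a=b^{-1}$, so $\lkl{a}=\lkl{b}$; but then $x\in\lkl{a}\cap B=\lkl{b}\cap B=\emptyset$, contradiction. From here on I assume $\pg{a}\neq\pg{b}$, so the non-adjacency hypothesis gives $\pg{a}\notin\st(\pg{b})$.

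Next I split on whether $x^{-1}\in B$. If $x^{-1}\notin B$, condition~(2) of Lemma~\ref{le:whdef} applied to $(B,b)$ gives $b\geq x$, so $\lk(\pg x)\subset \st(\pg b)$. Since $\pg{a}\in\lk(\pg{x})$, this puts $\pg{a}\in\st(\pg{b})$, contradicting the previous paragraph. If instead $x^{-1}\in B$, then $\pg{x}\in X\cap B\cap B^{-1}$, and $\pg{x}\notin \lk(\pg b)$, so condition~(1) of Lemma~\ref{le:whdef} says the component $C$ of $\pg{x}$ in $\Gamma-\st(\pg{b})$ is wholly contained in $X\cap B\cap B^{-1}$. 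Since $\pg{a}$ is adjacent to $\pg{x}$ and $\pg{a}\notin\st(\pg{b})$, $\pg{a}$ lies in the same component $C$, giving $\pg{a}\in B\cap B^{-1}$, i.e.\ both $a$ and $a^{-1}$ are in $B$, contradicting $a\notin B$.

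The main obstacle, such as it is, is keeping the bookkeeping between vertices and letters straight and remembering to use the $\Omega_\ell$ convention $B\cap\lkl{b}=\emptyset$; beyond that the lemma is a direct consequence of Lemma~\ref{le:whdef} together with the hypothesis that $\pg{a}$ lies outside $\st(\pg{b})$.
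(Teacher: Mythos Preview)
Your proof is correct and follows essentially the same approach as the paper: assume $x\in\lkl{a}\cap B$ and use Lemma~\ref{le:whdef} to reach a contradiction in each case. The only cosmetic difference is that the paper invokes the alternate formulation of Lemma~\ref{le:whdef} (either $b\geq x$ or $(B,b)$ conjugates the component of $\pg{x}$), whereas you split on whether $x^{-1}\in B$ and apply the two numbered conditions directly; you are also more explicit about the edge case $\pg{a}=\pg{b}$, which the paper handles implicitly via the $\Omega_\ell$ convention.
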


\begin{proof}
If $x\in\lkl{a}\cap B$, then $x\in B$ and by Lemma~\ref{le:whdef}, either $b\geq x$ or $(B,b)$ acts on the connected component of $\pg{x}$ in $\Gamma-\st(\pg{b})$ by conjugation.
If the latter were true, since $\pg{a}$ is adjacent to $\pg{x}$ and not $\pg{b}$, we would have that $a\in B$, a contradiction.
So $b\geq x$, in which case $\pg{a}$ is adjacent to $\pg{b}$, a contradiction.
\end{proof}

\begin{sublemma}\label{sl:commshort}
Suppose $\alpha$, $\beta$, and $[W]$ are as in Lemma~\ref{le:machine},
and also that $\alpha=(A,a)$, $\beta=(B,b)$, and that either $\adj{a}{b}$ or that $A\cap B=\emptyset$ with $a^{-1}\notin B$.
Then
$
|\beta\cdot [W]|<|\alpha^{-1}\cdot [W]|
$.
\end{sublemma}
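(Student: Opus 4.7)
Proof proposal. My plan is to reduce the desired strict inequality to the peak inequality \eqref{eq:pkineq} by establishing a monotonicity statement: under the hypotheses, $\beta$ cannot increase length by more on $[W]$ than it does on $\alpha^{-1}\cdot[W]$. To this end, set $V=\alpha^{-1}\cdot[W]$ and rewrite \eqref{eq:pkineq} as
\[D_{[W]}(\alpha^{-1})\;=\;|V|-|[W]|\;>\;|\beta\cdot V|-|V|\;=\;D_V(\beta),\]
while the desired conclusion $|\beta\cdot[W]|<|V|$ is equivalent to $D_{[W]}(\beta)<D_{[W]}(\alpha^{-1})$. Consequently, it suffices to prove the monotonicity inequality $D_{[W]}(\beta)\leq D_V(\beta)$. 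Then the sublemma follows immediately: $D_{[W]}(\beta)\leq D_V(\beta)<D_{[W]}(\alpha^{-1})$.

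For the monotonicity step, I plan to apply Lemma~\ref{le:dico} to both sides, which expresses $D_U(\beta)$ as $\aco{B}{L-B}{U,b}-\aco{b}{L}{U,b}$, and to compare the relevant adjacency counters between graphically reduced representatives of $[W]$ and $V$. In Case~1 ($\adj{a}{b}$), the conditions $A\cap\lkl{a}=\emptyset$ and $B\cap\lkl{b}=\emptyset$ force $a^{\pm1}\notin B$ and $b^{\pm1}\notin A$, so relation~\eqref{eq:R3} of type (R3b) applies and $\alpha\beta=\beta\alpha$; more substantively, using Definition~\ref{de:nicerep} the obvious representative of $V$ based on $[W]$ differs from $[W]$ only by insertions and deletions of letters $a^{\pm1}$, all of which lie in $\lkl{b}$. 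Since adjacency counters relative to $b$ depend only on endpoints outside $\lkl{b}$ and on the fact that the middle $u$-part lies inside $\lkl{b}$, such insertions and deletions of $\lkl{b}$-letters affect neither $\aco{B}{L-B}{\cdot,b}$ nor $\aco{b}{L}{\cdot,b}$, yielding equality $D_{[W]}(\beta)=D_V(\beta)$. In Case~2 (where $A\cap B=\emptyset$, $a^{-1}\notin B$, and $\pg{a}$ not adjacent to $\pg{b}$—otherwise we are in Case~1), Sublemma~\ref{sl:notnear} supplies $\lkl{a}\cap B=\emptyset$. Combined with $A\cap B=\emptyset$ and $a,a^{-1}\notin B$, this shows that the insertions/deletions of $a^{\pm1}$ performed by $\alpha^{-1}$ occur only adjacent to letters of $(A-a)^{\pm1}$, which are themselves disjoint from $B^{\pm1}$; I will then verify directly that these modifications cannot destroy subsegments counted by $\aco{B}{L-B}{\cdot,b}$ nor create new subsegments counted by $\aco{b}{L}{\cdot,b}$, which is enough for monotonicity.

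The main obstacle lies in the subcase of Case~2 where $b^{-1}\in A$ (forcing $a\geq b$ by Lemma~\ref{le:whdef}): here $\alpha$ and $\beta$ do not commute in $\AAG$, and $\alpha^{-1}$ prepends a copy of $a$ in front of every instance of $b$ in a representative of $[W]$. The delicate point is that $a\notin\lkl{b}$, so these newly inserted $a$-letters are potential endpoints for subsegments counted relative to $b$; however, $a,a^{-1}\notin B$, so they cannot serve as the $c$ or $d$ endpoint in a subsegment $(cud^{-1})^{\pm1}$ contributing to $\aco{B}{L-B}{\cdot,b}$ or $\aco{b}{L}{\cdot,b}$. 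A careful bookkeeping—pairing each $b$ in $[W]$ with its associated $ab$ in $V$ and checking the endpoint conditions—should yield the required non-decrease of the signed sum, completing the proof.
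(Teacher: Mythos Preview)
Your overall strategy coincides with the paper's: reduce the sublemma to a comparison between $D_{[W]}(\beta)$ and $D_{\alpha^{-1}\cdot[W]}(\beta)$, prove that comparison by analyzing how the insertions and deletions of $a^{\pm1}$ affect the adjacency counters relative to $b$, and then combine with the peak inequality~\eqref{eq:pkineq}. Your Case~1 (the adjacent case) is correct and matches the paper's: since $a\in\lkl{b}$, the $a^{\pm1}$ that get inserted or deleted lie in $\lkl{b}$ and are invisible to all counters $\aco{-}{-}{\cdot,b}$, so $D_{[W]}(\beta)=D_V(\beta)$.

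In Case~2 your key intermediate claim is wrong as stated: the letters of $(A-a)^{\pm1}$ are \emph{not} disjoint from $B^{\pm1}$. From $A\cap B=\emptyset$ you only get $(A-a)\cap B=\emptyset$; for instance $b^{-1}\in A-a$ already lies in $(A-a)\cap B^{-1}$, and for any $c\in A-a$ with $c^{-1}\in B$ one has $c^{-1}\in(A-a)^{-1}\cap B$. So the heuristic ``the modifications happen away from $B$'' does not go through. What does survive is the quartet of facts $A\cap B=\emptyset$, $a,a^{-1}\notin B$, $\lkl{a}\cap B=\emptyset$ (your use of Sublemma~\ref{sl:notnear}), and $\lkl{b}\cap A=\emptyset$ (Sublemma~\ref{sl:notnear} with the roles of $(A,a)$ and $(B,b)$ reversed, using $b\notin A$). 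The paper uses exactly these: it fixes a graphically reduced $W'$ representing $\alpha^{-1}\cdot[W]$, takes $W$ to be the obvious representative of $\alpha\cdot[W']$, and then tracks each counted subsegment $(cud^{-1})^{\pm1}$ through the passage between $W'$ and $W$. The point is that the $c$-endpoint (with $c\in B$) is never $a^{\pm1}$, and no $a^{\pm1}$ is inserted inside the $\lkl{b}$-word $u$ because $\lkl{b}\cap A=\emptyset$; the genuinely delicate case is when the \emph{other} endpoint equals $a^{\pm1}$ and gets deleted, whereupon one must read further along the word through a $\lkl{a}$-segment to locate the new endpoint and check it still lies in $L-B-\lkl{b}$ (using $\lkl{a}\cap B=\emptyset$ and $A\cap B=\emptyset$). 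That subsegment-by-subsegment bookkeeping---not a blanket disjointness claim---is what your ``verify directly'' step must actually carry out, and it is precisely what the paper does.
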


\begin{proof}
Take $W'$ to be a representative of $\alpha^{-1}\cdot [W]$ and take $W$ to be the obvious representative of $\alpha\cdot [W']$ based on $W'$ (this doesn't change our original $[W]$).
We will show the sublemma by analyzing adjacency counters.
First we claim that:
\begin{equation}\label{eq:twocounters1}\aco{B}{L-B}{W,b}\geq\aco{B}{L-B}{W',b}\end{equation}
We will show this by showing that every subsegment of $W'$ (meaning a subsegment of an element of $W'$) that is counted by the adjacency counter on the right above is also counted by the one on the left.
So suppose $(cud^{-1})^{\pm1}$ is counted on the right in Equation~(\ref{eq:twocounters1}), \emph{i.e.} $(cud^{-1})^{\pm1}$ is a subsegment of $W'$ with $c\in B$, $d\in L-B-\lkl{b}$, and with $u$ a word in $\lkl{b}$.

If $\adj{a}{b}$, then since $\beta\in\Omega_\ell$, we know $a^{\pm1}\notin B$.
This means that $\pg{c}\neq\pg{a}$.
Since $a\in\lkl{b}$, we also have $\pg{d}\neq\pg{a}$.
This means that the corresponding subsegment of $W$ is $(cu'd^{-1})^{\pm1}$, where $u'$ is $u$, possibly with instances of $a^{\pm1}$ added or removed.
This subsegment is then counted by the counter on the right.

If $A\cap B=\emptyset$, then $a\notin B$ and $b\notin A$.
Since $a^{\pm1}\notin B$, we know that $\pg{c}\neq\pg{a}$.
By Sublemma~\ref{sl:notnear}, we know $\lkl{b}\cap A=\emptyset$.
Note that it is possible that in passing to $W$, this letter $d$ could be deleted by a copy of $a^{-1}$ added to its left if $d=a^{-1}$.
We consider this case separately.

First suppose $d$ is not deleted.
Then the subsegment of $W$ corresponding to $(cud^{-1})^{\pm1}$ is either $(cud^{-1})^{\pm1}$ or $(cua^{-1}d^{-1})^{\pm1}$, depending on whether $d\in A$ or not.
In either case, this subsegment is counted once by the counter on the left, in the second case because $a\in L-B-\lkl{b}$.

If $d$ is deleted, then $d=a^{-1}$, and our original $(cud^{-1})^{\pm1}=(cua)^{\pm1}$ was part of a subsegment $(cuavf^{-1})^{\pm1}$, where $f\in (A-a)$ and $v$ is a word in $\lkl{a}$.
Note that by Sublemma~\ref{sl:notnear}, we know $\lkl{a}\cap B=\emptyset$, so the counter on the right counts this segment only once.
The subsegment of $W$ corresponding to this $(cuavf^{-1})^{\pm1}$ is then $(cuvf^{-1})^{\pm1}$.
Write $v=v'v''$ where $v'$ is the maximal initial segment of $v$ that is a word in $\lkl{b}\cap\lkl{a}$ and let $f'$ be the unique letter such that $cuv'(f')^{-1}$ is an initial segment of $cuvf^{-1}$.
Either $f'=f$ or $f'\in(\lkl{a}-\lkl{b})$.
In either case, $f'\in L-B-\lkl{b}$ (since $A\cap B=\emptyset$ and $B\cap\lkl{a}=\emptyset$), so the corresponding subsegment of $W$ is counted once by the counter on the left.
This shows our Equation~(\ref{eq:twocounters1}).

Now we will show:
\begin{equation}\label{eq:twocounters2}\aco{b}{L}{W',b}\geq\aco{b}{L}{W,b}\end{equation}
Suppose $(bud^{-1})^{\pm1}$ is a subsegment of $W$ counted by the counter on the right above, so $d\in L-\lkl{b}$ and $u$ is a word in $\lkl{b}$.
If $\adj{a}{b}$, then $W$ came from a subsegment $(bu'd^{-1})^{\pm1}$ of $W'$, where $u'$ is $u$, possibly with an instance of $a^{\pm1}$ added or removed; this subsegment is counted by the counter on the left.

If $A\cap B=\emptyset$, then either $d$ originally appeared in $W$ or it was added in passing to $W'$.
If it originally appeared in $W$, then $(bud^{-1})$ came from either a $(bud^{-1})^{\pm1}$ or a $(bu'au''d^{-1})^{\pm1}$, where in the second case $u''$ is the maximal terminal segment of $u$ that is a word in $\lkl{a}\cap\lkl{b}$; this subsegment of $W'$ is counted by the counter on the left.
If it was added, our $(bud^{-1})^{\pm1}$ in $W$ is part of a $(buavf^{-1})^{\pm1}$, with $v$ a word in $\lkl{a}$ and $f\in A$.
This subsegment is counted only once and came from a subsegment $(buvf^{-1})^{\pm1}$ that is counted once (for similar reasons as above).
This shows Equation~(\ref{eq:twocounters2}).

From Lemma~\ref{le:dico}, Equation~(\ref{eq:twocounters1}) and Equation~(\ref{eq:twocounters2}), we see that:
\[D_{[W]}(\beta)\geq D_{\alpha^{-1}\cdot[W]}(\beta)\]
By the definition of $D$, this means that 
$
|[W]|+|\beta\alpha^{-1}\cdot [W]|\geq |\alpha^{-1}\cdot [W]|+|\beta\cdot[W]|
$.
Combining this with Equation~(\ref{eq:pkineq}), we obtain
\[2|\alpha^{-1}\cdot [W]|>|\alpha^{-1}\cdot[W]|+|\beta\cdot[W]|,\]
which immediately implies the sublemma.
\end{proof}

\begin{proof}[Proof of Lemma~\ref{le:machine}]
If we have $\alpha=(A,a)$ and $\beta=(B,b)$, we will set $A'=L-A-\lkl{a}$ and $B'=L-B-\lkl{b}$.
Let $\overline\alpha=(A', a^{-1})$ and $\overline\beta=(B',b^{-1})$.
By Equation~(\ref{eq:R8}) and the fact that $\alpha$ and $\beta$ are long-range, these automorphisms describe the same elements of $\OAG$, and therefore $\alpha^{-1}\cdot [W]=\overline\alpha^{-1}\cdot [W]$ and $\beta\alpha^{-1}\cdot [W]=\overline\beta\alpha^{-1}\cdot [W]$.
We claim that if the lemma holds with $\alpha$ or $\beta$ replaced with $\overline\alpha=(A', a^{-1})$ or $\overline\beta=(B',b^{-1})$, respectively, then it holds as originally stated.
Suppose $\delta_k\cdots\delta_1$ is a peak-lowering factorization of $\overline\beta\alpha^{-1}$ (for example).
By Equation~(\ref{eq:R8}), the element $\beta\alpha^{-1}(\overline\beta\alpha^{-1})^{-1}$
is the conjugation $(L-b^{-1},b)$ (which is in $\Omega_\ell$).
If $|\beta\alpha^{-1}\cdot [W]|<|\alpha\cdot [W]|$ then 
\[\beta\alpha^{-1}=(L-b^{-1},b)\delta_k\cdots\delta_1\]
is a peak-lowering factorization of $\beta\alpha^{-1}$, since $(L-b^{-1},b)$ does not change the length of any conjugacy class.
Otherwise $|W|<|\alpha\cdot [W]|$.
Again by Equation~(\ref{eq:R8}), $\overline\beta\beta$ is the conjugation $(L-b,b^{-1})$.
So $(\overline\beta\alpha^{-1})^{-1}\beta\alpha^{-1}$ is $\alpha(L-b,b^{-1})\alpha^{-1}$.
If $b\notin A$, then by Equations~(\ref{eq:R9}) and~(\ref{eq:R10}), we know $(\overline\beta\alpha^{-1})^{-1}\beta\alpha^{-1}$ is a product of conjugations.
If $b\in A$, then by Equation~(\ref{eq:R8}), we know $(\overline\beta\alpha^{-1})^{-1}\beta\alpha^{-1}$ is $(L-a^{-1},a)\overline\alpha(L-b,b^{-1})\overline\alpha^{-1}(L-a,a^{-1})$, which is then a product of conjugations by Equations~(\ref{eq:R9}) and~(\ref{eq:R10}).
In any case, we have a product of conjugations $\gamma_j'\cdots\gamma_1'$ equal to $(\overline\beta\alpha^{-1})^{-1}\beta\alpha^{-1}$; then
\[\beta\alpha^{-1}=\delta_k\cdots\delta_1\gamma_j'\cdots\gamma_1'\]
is a peak-lowering factorization of $\beta\alpha^{-1}$, since conjugations do not change the length of conjugacy classes.
So we may swap out $\overline \alpha$ for $\alpha$ and $\overline\beta$ for $\beta$ as needed in the proof of this lemma.
Also, by the symmetry in the definition of a peak, we may switch $\alpha$ and $\beta$ if needed.

We fix a $k$-tuple of graphically reduced cyclic words $W$ representing the conjugacy class $[W]$.
Throughout this proof, $W'$ will denote the obvious representative of $\alpha^{-1}\cdot[W]$ based on $W$. 
We break this proof down into several cases.

\noindent
{\bf Case 1:} $\alpha$ is induced by a permutation of $L$.
Then $|\alpha\cdot [W]|=|[W]|$.
Since $(\beta,\alpha)$ is a peak, $\beta$ must shorten $\alpha\cdot[W]$, so $\beta=(B,b)$ for some $(B,b)$.  
Then the automorphism $(\alpha^{-1}(B),\alpha^{-1}(b))\in \Omega_\ell$ is well defined, and by Equation~(\ref{eq:R6}) the following factorization is peak-lowering:
\[\beta\alpha^{-1}=\alpha^{-1}(\alpha^{-1}(B),\alpha^{-1}(b))\]

In the remaining cases, we assume that $\alpha=(A,a)$ and $\beta=(B,b)$.
We will implicitly use Equation~(\ref{eq:R1}) to write $\alpha^{-1}$ as $(A-a+a^{-1},a^{-1})$ in the following.

\noindent
{\bf Case 2:} $a\in\lkl{b}$.
Of course, this implies that $\pg{a}\neq\pg{b}$.
Since $\alpha$ and $\beta$ are long-range, we know that $a,a^{-1}\notin B$ and $b, b^{-1}\notin A$.
Then by Equation~(\ref{eq:R3}b), we have:
\[\beta\alpha^{-1}=(B,b)(A-a+a^{-1},a^{-1})=(A-a+a^{-1},a^{-1})(B,b)=\alpha^{-1}\beta\]
By Sublemma~\ref{sl:commshort}, we know $|\beta\cdot [W]|<|\alpha^{-1}\cdot[W]|$, so this factorization is peak-lowering.

\noindent
{\bf Case 3:} $A\cap B=\emptyset$ and $a\notin\lkl{b}$.
We will break into sub-cases according to the configuration of $a^{-1}$ and $b^{-1}$.

\noindent
{\bf Sub-case 3a:} $\pg{a}=\pg{b}$.
Since $A\cap B=\emptyset$, this implies that $a=b^{-1}$.
By Equation~(\ref{eq:R2}), the following factorization is peak-lowering:
\[\beta\alpha^{-1}=(B,b)(A-a+b,b)=(A+B+b,b)\]

\noindent {\bf Sub-case 3b:} $a^{-1}\notin B$.
If $b^{-1}\notin A$, then
\[\beta\alpha^{-1}=(B,b)(A-a+a^{-1},a^{-1})=(A-a+a^{-1},a^{-1})(B,b)\]
by Equation~(\ref{eq:R3}a).
If $b^{-1}\in A$, then
by Equations~(\ref{eq:R2}) and~(\ref{eq:R4}a), we have:
\[\beta\alpha^{-1}=(B,b)(A-a+a^{-1},a^{-1})=(A+B-b-a+a^{-1},a^{-1})(B,b)\]
In either case, by Sublemma~\ref{sl:commshort}, $|\beta\cdot[W]|<|\alpha^{-1}\cdot[W]|$, so these factorizations are peak-lowering.

\noindent {\bf Sub-case 3c:} $\pg{a}\neq \pg{b}$, $a^{-1}\in B$, and $b^{-1}\in A$.
Note that since we are allowed to switch $\alpha$ and $\beta$, if $\pg{a}\neq\pg{b}$ and either $a^{-1}\notin B$ or $b^{-1}\notin A$, we are in sub-case 3b.
Therefore this sub-case finishes case 3.
Since $a^{-1}\in B$ and $a\notin B$, we see from Lemma~\ref{le:whdef} that $b\geq a$.
Similarly, $a\geq b$. 
So $a\sim b$ and by Lemma~\ref{le:whdef}, the automorphisms $\alpha'=(A,b^{-1})$ and $\beta'=(B,a^{-1})$ are well defined.

In the rest of this case, all adjacency counting is done with respect to $W'$.
Since $a\sim b$ and $\pg{a}$ is not adjacent to $\pg{b}$ in $\Gamma$, note that $\lkl{a}=\lkl{b}$ and therefore the adjacency counters with respect to $a$ and $b$ are the same functions.
Then by Lemma~\ref{le:dico}:
\[D(\alpha)+D(\beta)=D(\alpha')+D(\beta')\]
Also, by the definition of $D$ and Equation~(\ref{eq:pkineq}):
\[D(\alpha)+D(\beta)=-(2|\alpha^{-1}\cdot[W]|-|[W]|-|\beta\alpha^{-1}\cdot[W]|)<0\]
So either $D(\alpha')<0$ or $D(\beta')<0$.
Since we may swap $\alpha$ and $\beta$ (which swaps $\alpha'$ and $\beta'$), we assume $D(\beta')<0$.

Now we will find our peak-lowering factorization.
Let $\sigma_{a,b}$ be the type~(1) Whitehead automorphism from Equation~(\ref{eq:R5}).
By Equation~(\ref{eq:R5}), we have: 
\[\beta(\beta')^{-1}=(B,b)(B-a^{-1}+a,a)=(B-a^{-1}+a-b+b^{-1},a)\sigma_{a,b}\]
By Equation~(\ref{eq:R2}):
\[\beta'\alpha^{-1}=(B,a^{-1})(A-a+a^{-1},a^{-1})=(A+ B-a,a^{-1})\]
Then using
$\beta\alpha^{-1}=\beta(\beta')^{-1}\beta'\alpha^{-1}$, we have the factorization:
\[\beta\alpha^{-1}=(B-a^{-1}+a-b+b^{-1},a)\sigma_{a,b}(A+ B-a,a^{-1})\]
To show this factorization is peak-lowering, note the following:
\[
|(A+ B-a,a^{-1})\cdot[W]|
=|\beta'\alpha^{-1}\cdot[W]|
=D(\beta')+|\alpha^{-1}\cdot[W]|
<|\alpha^{-1}\cdot [W]|
\]
This is because $D(\beta')=|\beta'\alpha^{-1}\cdot[W]|-|\alpha^{-1}\cdot [W]|$.
Then since $\sigma_{a,b}$ does not change the length of a conjugacy class, this factorization is peak-lowering and we are done with this case.

\noindent {\bf Case 4:} $A\cap B\neq\emptyset$ and $a\notin\lkl{b}$.
All adjacency counting in this case is done with respect to $W'$.
First we show we can assume that we are in one of two sub-cases: either $a\notin B$ and $b\notin A$, or else $a\notin B$, $a^{-1}\in B$, $b\in A$ and $b^{-1}\notin A$.

Possibly by replacing $\beta$ with $\overline\beta$, we may assume that $a\notin B$.
Then if $b\notin A$, then we are in the first sub-case, so suppose $b\in A$.
First suppose $a^{-1}\in B$; if $b^{-1}\notin A$, then we are in the second sub-case, and if $b^{-1}\in A$, we can get to the first sub-case by swapping both $\alpha$ with $\overline\alpha$ and $\beta$ with $\overline\beta$.
Otherwise $a^{-1}\notin B$, and swapping $\alpha$ with $\overline\alpha$ puts us in the first sub-case.

In both of these sub-cases we will find that $\alpha^{-1}\cdot[W]$ is shortened by a well-defined Whitehead automorphism $(C\cap D,c)$, where $C$ is $A$ or $A'$, $D$ is $B$ or $B'$, and $c$ is an element of $\{a,a^{-1},b^{-1},b\}\cap A\cap C$.
By swapping $\alpha$ with $\overline\alpha$ if necessary, we assume $C=A$; similarly, we assume that $D=B'$.
Then $c$ is $a$ or $b^{-1}$, if it is $b^{-1}$, we swap $\alpha$ with $\beta$, $\alpha$ with $\overline\alpha$, and $\beta$ with $\overline\beta$ to get $(C\cap D,c)=(A\cap B',a)$.

Then in any event, we may assume that $(A\cap B',a)$ shortens $\alpha^{-1}\cdot[W]$.
We deduce from Lemma~\ref{le:whdef} that $(A-B'+a,a)$ is a well defined Whitehead automorphism.
From Equation~(\ref{eq:R2}) we have:
\[\alpha=(A- B'+a,a)(A\cap B',a)\]
Then we factor:
\[\beta\alpha^{-1}=\beta(A\cap B',a)^{-1}(A- B'+ a,a)^{-1}\]
Since $(A\cap B',a)$ shortens $\alpha^{-1}\cdot[W]$, we know that 
\[|(A-B'+a,a)^{-1}\cdot[W]|<|\alpha^{-1}\cdot[W]|\]
 and that $\beta(A\cap B',a)^{-1}$ is a peak with respect to $(A- B'+ a,a)^{-1}\cdot[W]$.
Then we can apply case 3 of this lemma to the peak $\beta(A\cap B' ,a)^{-1}$, and obtain a peak-lowering factorization of our original peak.

\noindent {\bf Sub-case 4a:} $a\notin B$, $a^{-1}\in B$, $b\in A$ and $b^{-1}\notin A$.
Then $a\sim b$ by Lemma~\ref{le:whdef}, and since $\pg{a}$ is not adjacent to $\pg{b}$, we have $\lkl{a}=\lkl{b}$.
Then adjacency counters taken with respect to $a$ and $b$ are the same.
Let $\gamma_1=(A\cap B,b)$, $\gamma_2=(A\cap B',a)$, $\gamma_3=(A'\cap B,a^{-1})$, and $\gamma_4=(A'\cap B', b^{-1})$.
Since $a\sim b$, these $\gamma_1,\gamma_2,\gamma_3$, and $\gamma_4$ are all well defined by Lemma~\ref{le:whdef}.

Now we will show that one of these automorphisms shortens $\alpha^{-1}\cdot[W]$.
Apply Lemma~\ref{le:dico} twice to get:
\[
D(\alpha)+D(\beta)=\aco{A}{A'}{a}-\aco{a}{L}{a}+\aco{B}{B'}{b}-\aco{b}{L}{b}\]
Then by further applications of Lemma~\ref{le:dico}, we obtain:
\[\begin{split}
\sum_{i=1}^4D(\gamma_i&)=\\
&\aco{A\cap B}{A'\cup B'}{b}-\aco{b}{L}{b}
+\aco{A'\cap B}{A\cup B'}{a^{-1}}-\aco{a^{-1}}{L}{a^{-1}}
\\&
+\aco{A\cap B'}{A'\cup B}{a}-\aco{a}{L}{a}
+\aco{A'\cap B'}{A\cup B}{b^{-1}}-\aco{b^{-1}}{L}{b^{-1}}\\
\end{split}\]
Putting these together, it follows from the additivity of adjacency counters that:
\[
2(D(\alpha)+D(\beta))=
\sum_{i=1}^4D(\gamma_i)+2(\aco{A\cap B}{A'\cap B'}{a}+\aco{A'\cap B}{A\cap B'}{a})
\]
Then by Equation~(\ref{eq:pkineq}):
\[0>2(D(\alpha)+D(\beta))\geq\sum_{i=1}^4D(\gamma_i)\]
This shows that for some $i$, $D(\gamma_i)<0$, so
one of $\gamma_1$, $\gamma_2$, $\gamma_3$ or $\gamma_4$ shortens $[W']$.
We have found an automorphism shortening $[W']$ as described above, so we are done with this sub-case.

\noindent{\bf Sub-case 4b:} $a\notin B$ and $b\notin A$.
We claim that $(A\cap B',a)$ is well defined.
By Lemma~\ref{le:whdef}, $(A\cap B',a)$ is well defined if for every $x\in A\cap B'$ with $a\not\geq x$, $(A\cap B',a)$ acts on component of $\pg{x}$ in $\Gamma-\st(\pg{a})$ by conjugation.
So suppose $x\in A\cap B'$ with $a\not\geq x$ and let $Y_1$ denote the component of $\pg{x}$ in $\Gamma-\st(\pg{a})$.
Then $(A\cap B',a)$ is well defined if for every $y\in Y_1$, we have $y,y^{-1}\in A$ and $y,y^{-1}\in B'$.
This first condition is true since $(A,a)$ acts on $Y_1$ by conjugation (since $a\not\geq x$).
So suppose for contradiction that $y\in B\cup \lkl{b}$ (meaning $y\notin B'$) with $\pg{y}\in Y_1$.
Then $y\in A$ and $a\not\geq y$.
By Sublemma~\ref{sl:notnear}, we know $A\cap\lkl{b}=\emptyset$.
This 
forces $y$ to be in $B$.
Let $Y_2$ be the component of $\pg{y}$ in $\Gamma-\st(\pg{b})$.
Since $y\in B$, either $b\geq y$ or $(B,b)$ conjugates $Y_2$.
If $b\geq y$, the fact that $\st(\pg{a})$ separates $\pg{b}$ from $\pg{y}$ means that $a\geq y$, a contradiction.
Then $(B,b)$ conjugates $Y_2$.
Since $a\notin B$, this means $a\notin Y_2$, which implies $\st(\pg{a})\cap Y_2=\emptyset$.
So since $\st(\pg{a})$ separates $\pg{b}$ from $\pg{y}$ in $\Gamma$, this means that $Y_2$ is also a component of $\Gamma-\st(\pg{a})$.
In that case, however, $Y_1=Y_2$, which implies $x\in B$, a contradiction.
So $(A\cap B',a)$ is well defined.
Note that $(B\cap A', b)$ is well defined by the same argument.

Next we will show that either $(A\cap B',a)$ or $(B\cap A', b)$ shortens $\alpha^{-1}\cdot[W]$.
By Equation~(\ref{eq:pkineq}), we know that $0>D(\alpha)+D(\beta)$.
By Lemma~\ref{le:dico}, we know that
\begin{align*}
D(\alpha)&=\aco{A}{A'}{a}-\aco{a}{L}{a}
=\aco{A\cap B'}{A'}{a}+\aco{A\cap B}{A'}{a}-\aco{a}{L}{a}
\end{align*}
and that:
\begin{align*}
D(\beta)&=\aco{B}{B'}{b}-\aco{b}{L}{a}
=\aco{B\cap A'}{B'}{b}+\aco{B\cap A}{B'}{b}-\aco{b}{L}{b}
\end{align*}
Also from Lemma~\ref{le:dico}, we know that
\begin{align*}
D((A\cap B',a))&=\aco{A\cap B'}{A'\cup B}{a}-\aco{a}{L}{a}\\
&=\aco{A\cap B'}{A'}{a}+\aco{B\cap A}{B'\cap A}{a}-\aco{a}{L}{a}
\end{align*}
and that:
\begin{align*}
D((B\cap A',b))&=\aco{B\cap A'}{B'\cup A}{b}-\aco{b}{L}{b}\\
&=\aco{B\cap A'}{B'}{b}+\aco{A\cap B}{A'\cap B}{b}-\aco{b}{L}{b}
\end{align*}

We claim that
$\aco{A\cap B}{A'}{a}\geq \aco{A\cap B}{A'\cap B}{b}$.
Since 
$b\notin A$, Sublemma~\ref{sl:notnear} says that 
$\lkl{b}\cap A=\emptyset$.
If $(cud^{-1})^{\pm1}$ is a subsegment of $W'$ with $c\in A\cap B$, $d\in A'\cap B$,  and $u$ a word in $\lkl{b}$, then either $u$ is a word in $\lkl{b}\cap\lkl{a}$, or $u=u'u_1u''$ where $u'$ a word in $\lkl{b}\cap\lkl{a}$ and $u_1\in \lkl{b}-\lkl{a}$.
If the former is true, $cud^{-1}$ is counted by $\aco{A\cap B}{A'}{a}$; if the latter is holds, then instead $cu'u_1$ is counted by $\aco{A\cap B}{A'}{a}$ (since $\lkl{b}\cap A=\emptyset$).
Either way, each subsegment of $W'$ counted by one counter is also counted by the other, showing the inequality.
Similarly, we know 
$\aco{B\cap A}{B'}{b}\geq \aco{B\cap A}{B'\cap A}{a}$.

Putting this all together, we have that:
\[0>D(\alpha)+D(\beta) >D((A\cap B',a))+D((B\cap A',b))\]
So one of $(A\cap B',a)$ and $(B\cap A',b)$ shortens $[W']$.
\end{proof}

\begin{remark}\label{re:purewhalg}
The \emph{pure automorphism group} $\AAGo$ of $A_\Gamma$ is the subgroup of $\AAG$ generated by dominated transvections, partial conjugations, and inversions.
It contains those graphic automorphisms which can be expressed as products of transvections and inversions; depending on $\Gamma$, $\AAGo$ may or may not be all of $\AAG$.
In any case, $\AAGo$ is a finite-index normal subgroup of $\AAG$.

Define the \emph{pure long-range Whitehead automorphisms} $\Omega_\ell^0$ to be $\Omega_\ell\cap \AAGo$.
If $\alpha\in\langle\Omega_\ell^0\rangle$, then in fact, we can peak reduce $\alpha$ with respect to any $k$-tuple of conjugacy classes $W$ by elements of $\Omega_\ell^0$.
To see this, consider the proof of Lemma~\ref{le:machine}: when we lower peaks in factorizations of $\alpha$, we move around type~(1) Whitehead automorphisms in case~1, and we introduce a type~(1) Whitehead automorphism in sub-case~3c that is in $\Omega_\ell^0$, and in no other case do we introduce a type~(1) Whitehead automorphism.
So if we start with a factorization of $\alpha$ by elements of $\Omega_\ell^0$ and peak-reduce it, we will end up with a peak-reduced factorization of $\alpha$ by elements of $\Omega_\ell^0$.
This technical detail is important for the application in Day~\cite{ssraag}.
\end{remark}


\section{Attempting to extend peak reduction to $\AAG$}
\subsection{A failure of peak-reduction}\label{se:nowhalg}
In this section we prove Proposition~\ref{mp:nowhalg}.

\begin{example}[Outer automorphisms of the four-vertex path]\label{ex:orbit}
Let $\Gamma$ be the four-vertex path, with labels as in Figure~\ref{fig:countex2}.
Let $P$ denote the subgroup of $\Out A_\Gamma$ generated by the images of the inversions and the single graphic automorphism (which swaps $a$ with $d$ and $b$ with $c$).
Then $P\cong (\Z/2\Z)\ltimes(\Z/2\Z)^4$.

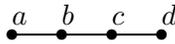
\begin{figure}[ht]
\begin{center}
\vspace{3em}
{
\setlength{\unitlength}{0.4 ex}
\begin{picture}(60,13)
\put(10,8){\circle*{2}}
\put(20,8){\circle*{2}}
\put(30,8){\circle*{2}}
\put(40,8){\circle*{2}}
\put(10,8){\line(1,0){30}}
\put(10,10){$a$}
\put(20,10){$b$}
\put(30,10){$c$}
\put(40,10){$d$}
\end{picture}
}
\end{center}
\caption{A graph $\Gamma$ such that peak-reduction fails on $A_\Gamma$.}
\label{fig:countex2}
\end{figure}

Let $N$ denote the subgroup of $\Out A_\Gamma$ generated by the images of the dominated transvections.
We have adjacent dominations $b\geq a$ and $c\geq d$, and non-adjacent dominations $c\geq a$ and $b\geq d$.
These are the only examples of domination in $\Gamma$.
This gives us six infinite cyclic subgroups of $\Aut A_\Gamma$ generated by dominated transvections: two for each example of non-adjacent domination (multiplying on the right and on the left) and one for each example of adjacent domination. 
Since $\adj{b}{c}$, these transvections commute and generate a copy of $\Z^6<\Aut A_\Gamma$.
Each of our pairs of non-adjacent transvections differ by an inner automorphism, so $N\cong\Z^4$.
From Equation~(\ref{eq:R6}), we know that $P$ normalizes $N$.
No vertex of $\Gamma$ has a star that separates $\Gamma$, so each partial conjugation is a full conjugation.
Then by Laurence's Theorem (Theorem~\ref{th:lau}), we have:
\[\Out A_\Gamma\cong P\ltimes N\]

Let $\phi\co\Z^4\to N$ be given by $\phi(p,q,r,s)(a)=ab^pc^q$ and $\phi(p,q,r,s)(d)=b^rc^sd$ (note $\phi(p,q,r,s)$ fixes the conjugacy classes $b$ and $c$).
Let $k\geq 2$ and let $w$ be the conjugacy class of the cyclic word $ad^k$.
For any $(p,q,r,s)\in \Z^4$, we have:
\[\phi(p,q,r,s)(w)=ab^{p+r}c^{q+ks}d(b^rd)^{k-1}\]
Note that the word on the right side is a graphically reduced cyclic word.
Then $\phi(p,q,r,s)$ fixes $w$ if and only if $p=0$, $r=0$, and $q=-ks$.
So the stabilizer $N_w$ is $\langle\phi(0,-k,0,1)\rangle$.
Further, the only classes in $(\Out A_\Gamma)\cdot w$ with length less than or equal to $|w|$ are the $8$ classes in $P\cdot w$.
Also note that if $w'=\sigma(w)$ with $\sigma\in P$, then the stabilizer $N_{w'}$ is $\langle \sigma\phi(0,-k,0,1)\sigma^{-1}\rangle$. 
\end{example}

\begin{proof}[Proof of Proposition~\ref{mp:nowhalg}]
Let $\Gamma$ be as in Example~\ref{ex:orbit}.
For $\alpha\in\Aut A_\Gamma$, let $|\alpha|$ denote the length of the class of $\alpha$ in $\Out A_\Gamma$ with respect to Laurence's generators.
Pick a natural number $k$ such that:
\[k > 1+\sup_{\alpha\in S}|\alpha|\]
Let $w$ be the conjugacy class of the cyclic word $ad^k$.
Let $\beta\in\Aut A_\Gamma$ represent the class of $\phi(0,-k,0,1)\in\Out A_\Gamma$, with $\phi$ as in Example~\ref{ex:orbit}.
Note that $\beta$ fixes $w$ and $\beta$ does not represent an element of $P$.

Suppose that $\beta$ can be peak reduced with respect to $w$ by elements of $S$.
Since $w$ is a minimal-length element of $(\Out A_\Gamma)\cdot w$, this means we can factor $\beta$ as $\gamma_m\cdots\gamma_1$ for some $\gamma_1,\ldots,\gamma_m\in S$, such that for each $j$, $1\leq j\leq m$, we have $|\gamma_j\cdots\gamma_1(w)|=|w|$.
Each $\gamma_{j}\cdots\gamma_1(w)$ is the same length as $w$ and in the same orbit, so by Example~\ref{ex:orbit}, it is in $P\cdot w$.

Fix a $j$, $1\leq j\leq m$.
Let $w'=\gamma_{j-1}\cdots\gamma_1(w)$.
There is $\sigma$ representing an element of $P$ such that $\sigma\gamma_j$ represents an element of $N$.
Since $|\sigma\gamma_j(w')|=|w'|$, we know from Example~\ref{ex:orbit} that $\sigma\gamma_j(w')=w'$ (since $w'\in P\cdot w$).
Also by Example~(\ref{ex:orbit}), $\sigma\gamma_j$ represents $\rho\phi(0,-ks,0,s)\rho^{-1}$ for some $s\in\Z$ and some $\rho\in P$. 
Then $|\sigma\gamma_j|=s(k+1)$; but since $\gamma_j\in S$ and therefore $|\sigma\gamma_j|\leq k$, this $s$ must be $0$.
Therefore each $\gamma_j$ represents an element of $P$.
Then $\beta$ represents an element of $P$, a contradiction.
\end{proof}

\subsection{Automorphisms fixing a set of basis elements}
In this section we prove Proposition~\ref{mp:specialwhalg}.

\begin{lemma}\label{le:shortensrgen}
Suppose $x$ is the conjugacy class of an element of $X$ and $\alpha\in\langle\Omega_s\rangle$.
Then $\alpha(x)$ cannot be shortened by a member of $\Omega_\ell$.
\end{lemma}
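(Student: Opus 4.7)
My plan is to combine Lemma~\ref{le:srsupp}, which pins down $\supp\alpha(x)$ as a clique in $\st(x)$, with the adjacency-counter formula of Lemma~\ref{le:dico} to control the length change $D_{[\alpha(x)]}(\beta)$ for any $\beta\in\Omega_\ell$. The key observation will be that the clique hypothesis places strong restrictions on the letters of a graphically reduced representative, and these restrictions are perfectly compatible with the defining property of a long-range Whitehead automorphism.

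First I would dispose of the case where $\beta$ is of type~(1), since such $\beta$ acts by a permutation of $L$ and so preserves the length of any cyclic word. So I can assume $\beta=(A,a)\in\Omega_\ell$ is of type~(2), in which case the long-range condition says $A\cap\lkl{a}=\emptyset$. Fix a graphically reduced cyclic word $w$ representing $\alpha(x)$. By Lemma~\ref{le:srsupp}, $\supp w\subset\st(x)$ is a clique in $\Gamma$, so every pair of letters appearing in $w$ commutes.

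I would then split on whether $\pg{a}\in\supp w$. If $\pg{a}\notin\supp w$, then the letters $a^{\pm1}$ do not appear in $w$, so $\aco{a}{L}{w,a}=0$; Lemma~\ref{le:dico} then gives $D_{[w]}(\beta)=\aco{A}{L-A}{w,a}\geq 0$ immediately. If instead $\pg{a}\in\supp w$, then since $\supp w$ is a clique, every other vertex of $\supp w$ is adjacent to $\pg{a}$, so every letter appearing in $w$ is either $a^{\pm1}$ or lies in $\lkl{a}$. Because $A\cap\lkl{a}=\emptyset$, the Whitehead automorphism $\beta$ fixes every letter of $\lkl{a}$; it also fixes $a$ and $a^{-1}$ by the definition of a type~(2) element with multiplier $a$. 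Hence $\beta(w)=w$ as a word, and the length is preserved exactly.

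I do not anticipate significant obstacles. The main conceptual point is simply the recognition that the clique structure of $\supp\alpha(x)$, combined with $\pg{a}\in\supp w$, forces every letter of $w$ outside $\lkl{a}$ to be $a^{\pm1}$; once this is noticed, the non-trivial case collapses to $\beta(w)=w$, and the other case is a one-line consequence of Lemma~\ref{le:dico} and the non-negativity of adjacency counters.
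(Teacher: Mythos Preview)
Your argument is correct and follows essentially the same route as the paper: dispose of type~(1), then for $(A,a)\in\Omega_\ell$ combine Lemma~\ref{le:srsupp} (the support of $\alpha(x)$ is a clique) with the long-range condition $A\cap\lkl{a}=\emptyset$. The paper organizes this as a single contradiction from Lemma~\ref{le:obvadj} (shortening forces both $a$ and some element of $A-a$ into the support, hence adjacent, contradicting long-range), whereas you split on whether $\pg{a}\in\supp w$ and observe in the second case that $\beta$ literally fixes $w$; this is a cosmetic difference, not a substantive one.
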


\begin{proof}
If an element of $\Omega_\ell$ shortens $\alpha(x)$, then it must be a type (2) automorphism $(A,a)$.
Further, we must have $a\in(\supp \alpha(x))^{\pm1}$ and $(A-a)\cap (\supp \alpha(x))^{\pm1}\neq\emptyset$ by Lemma~\ref{le:obvadj}.
We know $\supp \alpha(x)$ is a clique in $\Gamma$ by Lemma~\ref{le:srsupp}, so this contradicts the fact that $(A,a)$ is long-range.
\end{proof}

The image of $X$ in $H_\Gamma$ is a basis for $H_\Gamma$.
By declaring this basis to be orthonormal, we get an inner product $\langle-,-\rangle$ on $H_\Gamma$.
\begin{lemma}\label{le:knowdoms}
If $\alpha\in \langle\Omega_s\rangle$, then for any $a,b\in X$, we have
$\langle \alpha_*b,a\rangle\neq 0$ implies $a=b$, or $a\geq b$ with $\adj{a}{b}$.
\end{lemma}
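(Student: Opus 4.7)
The plan is to pass entirely to the homology representation and work with matrices rather than induct on the length of $\alpha$. By Corollary~\ref{co:tvgenstruct}, the image of $\langle\Omega_s\rangle$ under the homology representation is generated, as a group, by the elementary matrices $E_{a,b}=(\tau_{a,b})_*$ for $a,b\in X$ with $a\geq b$ and $\adj{a}{b}$. So it suffices to show that every element of this matrix subgroup has the ``allowable support'' property described in the lemma.

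To this end, I will introduce the set $S\subset \Aut H_\Gamma$ of matrices $M$ (with respect to the basis $X$) whose entries satisfy: $\langle Mb,a\rangle\neq 0$ implies $a=b$ or ($a\geq b$ and $\adj{a}{b}$). The plan is to verify three things: (i) $S$ contains the identity; (ii) for each allowable pair $(a,b)$, both $E_{a,b}$ and $E_{a,b}^{-1}$ lie in $S$ (each has only diagonal entries together with a single off-diagonal entry at position $(a,b)$, which is allowable by hypothesis); and (iii) $S$ is closed under matrix multiplication. Together these imply $S$ contains the generating set $\{E_{a,b}^{\pm 1}\}$ of the image of $\langle\Omega_s\rangle$ and is closed under the semigroup operation, hence contains the full image.

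The closure under multiplication is the main (though mild) obstacle, and is the only step that uses the structure of $\Gamma$ beyond what's built into the definition of $S$. Suppose $M,N\in S$ and $\langle MNb,a\rangle\neq 0$. Then there is some $c\in X$ with $\langle Mc,a\rangle\neq 0$ and $\langle Nb,c\rangle\neq 0$. The degenerate sub-cases $a=c$ and $c=b$ follow immediately from $N\in S$ and $M\in S$ respectively, so assume $a\neq c\neq b$. Then we have $a\geq c$, $\adj{a}{c}$, $c\geq b$, and $\adj{c}{b}$. Transitivity of domination yields $a\geq b$. For the adjacency, since $a\geq c$ is equivalent to $\lk(c)\subset\st(a)$ and $b\in\lk(c)$ (because $\adj{c}{b}$ and $b\neq c$), we conclude $b\in\st(a)$, giving $a=b$ or $\adj{a}{b}$ as required.

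With (i)--(iii) in hand, the lemma follows: any $\alpha\in\langle\Omega_s\rangle$ has $\alpha_*\in S$, which is precisely the statement to be proven. Notably, this approach sidesteps any need to induct on the length of $\alpha$ as a word in $\Omega_s$, since all the relevant structure is already encoded in the set of generators $\{E_{a,b}\}$ and the multiplicative closure of $S$.
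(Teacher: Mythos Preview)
Your proof is correct and essentially the same as the paper's: both reduce to the single combinatorial observation that if $a\geq c$ with $\adj{a}{c}$ and $c\geq b$ with $\adj{c}{b}$, then $a\geq b$ and $b\in\st(a)$. The paper packages this as induction on the word length of $\alpha$ (writing $\alpha=\beta\gamma$ with $\gamma$ a single transvection), while you package it as closure of the ``allowable support'' set $S$ under multiplication; despite your remark, these are the same argument, since closure of $S$ under multiplication by generators is exactly the inductive step.
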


\begin{proof}
We induct on the length of $\alpha$ with respect to the subset of transvections $\tau_{a,b}\in\Omega_s$.
The assertion is obvious if the length is zero.
Assume it is true for $\beta$ and that $\alpha=\beta\gamma$ where $\gamma=\tau_{c,d}\in\Omega_s$.
Suppose $\gamma=\tau_{c,d}$ for some $c,d\in L$ with $c\geq d$.
Then $\langle \alpha_*b,a\rangle\neq 0$ implies either that
$\langle \beta_*b,a\rangle\neq 0$ or that $\pg{a}=\pg{c}$ and $\langle \beta_*b,d\rangle\neq 0$.
In the first case, the lemma follows.
In the second case, we have $a\geq d$ and also $d\geq b$ with $\adj{d}{b}$ by inductive assumption.
\end{proof}

\begin{lemma}\label{le:fixgens}
Let $S\subset X$.
The pointwise stabilizer of $S$ in $\langle\Omega_s\rangle$ is generated by transvections $\tau_{a,b}\in\Omega_s$ with $\pg{b}\notin S$.
\end{lemma}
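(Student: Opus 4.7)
The plan is to combine the semidirect decomposition of $\langle \Omega_s \rangle$ from Corollary~\ref{co:tvgenstruct} with the injectivity of the homology representation from \partref{it:srinj}, reducing the proof to two linear-algebraic facts: one about stabilizers in $\SL(n,\Z)$, and one about column reduction in a particular unipotent matrix group. The inclusion ``$\supseteq$'' is trivial since $\tau_{a,b}$ fixes every generator other than $\pg{b}$. For the reverse, let $\alpha \in \langle \Omega_s \rangle$ pointwise fix $S$. By Corollary~\ref{co:tvgenstruct}, $\langle \Omega_s \rangle$ decomposes as $(G_1 \times \cdots \times G_m) \ltimes N$, where $C_1, \ldots, C_m$ are the adjacent-domination equivalence classes partitioning $X$, each $G_i \cong \SL(|C_i|,\Z)$ is generated by the transvections between elements of $C_i$, and $N$ is the nilpotent group generated by the remaining short-range transvections $\tau_{a,b}$ with $a \geq' b$ and $a \not\sim' b$. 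Throughout I work in $H_\Gamma$ via \partref{it:srinj}, recovering the corresponding identities in $A_\Gamma$ using Lemma~\ref{le:srsupp}, since the support of $\alpha(s)$ is a clique whose generated abelian subgroup embeds into $H_\Gamma$.

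Write $\alpha = g_1 \cdots g_m n$ in this decomposition; I claim each $g_i$ pointwise fixes $S \cap C_i$ and $n$ pointwise fixes $S$. Fix $s \in S \cap C_i$. By Lemma~\ref{le:knowdoms} applied to the generators of $N$, the vector $n(s) - s$ lies in $\mathrm{span}\{ a \in X : a \geq' s,\ a \not\sim' s\}$, and all such $a$ lie in classes $C_j$ with $j \neq i$. Since the $G_j$ commute pairwise and each $G_j$ acts trivially on $\mathrm{span}(C_k)$ for $k \neq j$, writing $n(s) = s + \sum_{j \neq i} u_j$ with $u_j \in \mathrm{span}(C_j)$ yields $\alpha(s) = g_i(s) + \sum_{j \neq i} g_j(u_j)$. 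Projecting onto each $\mathrm{span}(C_k)$, the equation $\alpha(s) = s$ forces $g_i(s) = s$ and each $u_j = 0$, so $n(s) = s$ as well.

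For each $g_i \in G_i \cong \SL(|C_i|,\Z)$, the standard fact that the pointwise stabilizer of a set of basis vectors in $\SL(n,\Z)$ is generated by the elementary matrices whose target column lies outside that set expresses $g_i$ as a product of $\tau_{a,b}$'s with $a,b \in C_i$ and $\pg{b} \notin S$. For $n \in N$, choose a linear order on $X$ refining $\geq'$, so that $N$ embeds in the strictly upper-triangular unipotent matrices. The condition $n(s)=s$ for $s \in S$ says the $s$-column of $n$ equals $e_s$. Clear the non-$S$ columns one at a time, left to right, by right-multiplying by $E_{a,b}^{-n_{a,b}}$ to kill each nonzero entry; each such operation alters only column $b$ and leaves the $S$-columns and previously-cleared columns untouched. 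Reading the inverse operations back expresses $n$ as a product of transvections $\tau_{a,b}$ all with $\pg{b} \notin S$.

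The main obstacle is to verify that every nonzero matrix entry $n_{a,b}$ of an element of $N$ corresponds to a generator, that is, $x_a \geq' x_b$ and $x_a \not\sim' x_b$, so that the elementary matrix $E_{x_a,x_b}$ really is in $N$. This reduces to a transitivity statement: if $a \geq' b$, $a \not\sim' b$ and $b \geq' c$, $b \not\sim' c$, then $a \geq' c$ and $a \not\sim' c$. The adjacency $\adj{a}{c}$ follows from $c \in \lk(b) \subset \st(a)$, and $a \not\sim' c$ follows from noting that $c \geq a$ combined with $a \geq b$ would give $c \geq b$, contradicting $b \not\sim' c$. With this in hand, products of $N$-generators remain supported on valid strict adjacent-domination positions, so the column reduction produces a valid factorization of $n$ using only $N$-generators whose targets lie outside $S$, and combining with the $G_i$-factorizations yields $\alpha$ as the desired product of short-range transvections with $\pg{b} \notin S$.
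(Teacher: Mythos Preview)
Your proof is correct and rests on the same ingredients as the paper's: injectivity of the homology representation (\partref{it:srinj}), the block structure coming from Corollary~\ref{co:tvgenstruct}, and Lemma~\ref{le:knowdoms} to control where nonzero entries can sit. The organization differs slightly. The paper does not first split $\alpha$ as $g_1\cdots g_m n$; instead it performs a single row reduction on $\alpha_*$, first reducing each diagonal $C_i\times C_i$ block to the identity using transvections $\tau_{a,b}$ with $a,b\in C_i$ and $b\notin S$, then observing via Lemma~\ref{le:knowdoms} that every remaining nonzero entry sits at a position $(a,b)$ with $a\geq b$, $\adj{a}{b}$, and $b\notin S$, and clearing those entries ``in appropriate order''. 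Your version separates the semisimple and unipotent pieces explicitly, invokes the standard $\SL(n,\Z)$ stabilizer fact for each $G_i$, and handles $N$ by column reduction; this makes the transitivity check for strict adjacent domination (needed to ensure each clearing move stays inside $N$) visible, whereas the paper's terser argument leaves the analogous ``appropriate order'' step to the reader. The two arguments are essentially dual---the paper uses left multiplication (row operations) and you use right multiplication (column operations) on the unipotent part---but neither buys anything the other does not.
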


\begin{proof}
Suppose $\alpha\in\langle \Omega_s\rangle$ and $\alpha$ fixes $S$ pointwise.
Let $A=\alpha_*\in\Aut H_\Gamma$.
Since $\alpha$ fixes $S$ pointwise, for any $x\in S$, $\langle Ax,y\rangle=0$ for all $y\in X-x$ and $\langle Ax,x\rangle=1$.
Let $X=C_1\cup\cdots\cup C_m$ be the decomposition of $X$ into adjacent domination equivalence classes.
First of all, for each $i$, we can 
row-reduce $A$ such that for any $a,b\in C_i$, $\langle Ab,a\rangle$ is $0$ if $a\neq b$ and $1$ if $a=b$.
In fact, we can do this by multiplying $\alpha$ by transvections $\tau_{a,b}$ for various $a,b\in C_i$ with $\pg{b}\notin S$ (each $\tau_{a,b}$ corresponds to a row operation). 
Now suppose some $\langle A b,a\rangle\neq 0$ with $a\not\sim b$.
Then $a\geq b$ and $\adj{a}{b}$ by Lemma~\ref{le:knowdoms}, and $\pg{b}\notin S$.
Since we have already reduced the diagonal, applying some power of $\tau_{a,b}$ to $\alpha$ will change this entry to zero.
Of course, by doing this in appropriate order to the nonzero entries with $a\not\sim b$, we can row-reduce the rest of $A$.
So we can reduce $\alpha$ to the identity by applying elements $\tau_{a,b}\in\Omega_s$ with $\pg{b}\notin S$.
\end{proof}

\begin{proof}[Proof of Proposition~\ref{mp:specialwhalg}]
Suppose $W=(w_1,\ldots,w_k)$ is a $k$--tuple of conjugacy classes with each $|w_i|=1$ and suppose $\alpha\in\AAG$ with $|\alpha\cdot W|=|W|$.
By Theorem~\ref{mt:threeparts}, we write $\alpha=\beta\gamma$ where $\beta\in\langle\Omega_\ell\rangle$ and $\gamma\in\langle\Omega_s\rangle$.
Also by Theorem~\ref{mt:threeparts}, we have a factorization $\beta=\delta_m\cdots\delta_1$ by elements of $\Omega_\ell$ that is peak-reduced with respect to $\gamma\cdot W$.
By Lemma~\ref{le:shortensrgen}, this $\delta_1$ cannot shorten $\gamma\cdot W$.
So since $\delta_m\cdots\delta_1$ is peak-reduced, we have $|\gamma\cdot W|=|\alpha\cdot W|=|W|$.

Since each $w_i$ is a minimal-length representative of its $\AAG$--orbit, it follows that each $\gamma(w_i)$ is the conjugacy class of an element of $L$.
If $x,y\in L$ with $y$ conjugate to $\gamma(x)$, then $x\sim y$ by Corollary~\ref{co:tvgenstruct} and $e(x,y)$ by Lemma~\ref{le:srsupp}.
In general, if $S$ is a basis for $\Z^j$ for some $j$, $S'\subset S$ and $A\in\SL(j,\Z)$ sends $S'$ to a subset of $S^{\pm1}$, then there is $B\in\SL(j,\Z)$ such that $B|_{S'}=A|_{S'}$ and $B$ restricts to a permutation on $S\cup(-S)$ (this can be proven by a row reduction argument).
Then from Corollary~\ref{co:tvgenstruct}, we deduce that there is a type~(1) Whitehead automorphism $\sigma$ such that $\sigma\gamma\cdot W=W$ and $\sigma\in\langle\Omega_s\rangle$.

Then by Lemma~\ref{le:fixgens}, we can write $\sigma\gamma$ as a product $\phi_r\cdots\phi_1$ of elements $\phi_1,\ldots,\phi_r$ of $\Omega_s$ that fix $\supp W$ pointwise.
Then the following is a peak-reduced factorization of $\alpha$ by elements of $\Omega_\ell\cup\Omega_s$:
\[\alpha=\delta_m\cdots\delta_1\sigma^{-1}\phi_r\cdots\phi_1\]
\end{proof}

\begin{corollary}\label{co:fixvertsgenset}
Suppose $W=(w_1,\ldots,w_k)$ is a $k$--tuple of conjugacy classes of $A_\Gamma$ with each $|w_i|=1$.
Then the stabilizer $(\AAG)_W$ of $W$ in $\AAG$ is generated by $(\Omega_\ell\cup\Omega_s)\cap((\AAG)_W)$.
\end{corollary}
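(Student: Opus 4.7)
The plan is to apply Proposition~\ref{mp:specialwhalg} to any $\alpha\in(\AAG)_W$ to obtain a peak-reduced factorization $\alpha=\alpha_m\cdots\alpha_1$ with each $\alpha_i\in\Omega_\ell\cup\Omega_s$, and then to rearrange this factorization so that each factor individually stabilizes $W$. Writing $L_i=|(\alpha_i\cdots\alpha_1)\cdot W|$, we have $L_0=L_m=|W|$, and since each $|w_j|=1$ is as small as possible, $L_i\geq|W|$ for every $i$. If the sequence $(L_i)$ were ever strictly above $|W|$, its maximum would be attained at an interior index and so would force a peak, contradicting peak-reduction. Hence $L_i=|W|$ throughout, and each partial tuple $W_i:=(\alpha_i\cdots\alpha_1)\cdot W$ consists entirely of conjugacy classes of letters in $L$.

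The key observation is that a type-(2) Whitehead automorphism in $\Omega_\ell\cup\Omega_s$ preserves the length of a conjugacy class $[x]$ of a letter only by fixing that class. I would verify this by inspecting the four formulas $x\mapsto x,\,xa,\,a^{-1}x,\,a^{-1}xa$ from Definition~\ref{de:whauts}: the outer two cases clearly preserve $[x]$ (the last because $a^{-1}xa$ is conjugate to $x$), while in the middle two cases the result is a reduced word of cyclic length $2$, since $x\in A$ and $a^{-1}\notin A$ force $x\neq a^{\pm1}$. Consequently every type-(2) factor $\alpha_i$ in our factorization satisfies $W_i=W_{i-1}$; only the type-(1) factors can alter $W_i$, and they do so by permuting the letters of $L$.

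Let $\pi_i$ denote the cumulative product of the type-(1) factors through index $i$, so that $W_i=\pi_i\cdot W$, and set $\beta_i=\pi_i^{-1}\alpha_i\pi_{i-1}$. When $\alpha_i$ is type-(1) one has $\pi_i=\alpha_i\pi_{i-1}$ and so $\beta_i=1$. When $\alpha_i=(A_i,a_i)$ is type-(2), $\pi_i=\pi_{i-1}$, and relation~(\ref{eq:R6}) gives $\beta_i=(\pi_{i-1}^{-1}(A_i),\pi_{i-1}^{-1}(a_i))$; this has the same scope (long-range or short-range) as $\alpha_i$, because type-(1) Whitehead automorphisms preserve the adjacency relation on $\Gamma$. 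A direct computation yields $\beta_i\cdot W=\pi_i^{-1}\alpha_i\cdot W_{i-1}=\pi_i^{-1}\cdot W_i=W$, so every nontrivial $\beta_i$ lies in $(\Omega_\ell\cup\Omega_s)\cap(\AAG)_W$.

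The telescoping identity $\alpha=\pi_m\beta_m\cdots\beta_1$ then expresses $\alpha$ as a product of elements of the desired subset, provided that $\pi_m$ itself is admissible. As a product of type-(1) Whitehead automorphisms, $\pi_m$ is type-(1), hence lies in $\Omega_\ell$, and $\pi_m\cdot W=W_m=W$ puts it in $(\AAG)_W$. I expect the main delicate point to be the case analysis in the second paragraph, since it is precisely the definitions of $\Omega_\ell$ and $\Omega_s$ that exclude the one remaining way a type-(2) factor could alter a length-$1$ conjugacy class without changing its length; once that case analysis is in place, the conjugation-and-telescoping device completes the argument with no further technicality.
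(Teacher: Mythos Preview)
Your argument is correct. Both you and the paper begin by invoking Proposition~\ref{mp:specialwhalg} to obtain a factorization $\alpha=\alpha_m\cdots\alpha_1$ in $\Omega_\ell\cup\Omega_s$ with every intermediate tuple $W_i$ of length $|W|$, and both then use the key observation that a type-(2) Whitehead automorphism cannot send a length-$1$ conjugacy class to a \emph{different} length-$1$ conjugacy class. (Your case check is fine; note incidentally that this observation holds for \emph{every} type-(2) element of $\Omega$, so the restriction to $\Omega_\ell\cup\Omega_s$ plays no special role in that step, contrary to your closing remark.)

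Where the two proofs diverge is purely in packaging. The paper builds the Whitehead graph $\Delta$ on tuples of length $|W|$, shows $\pi_1(\Delta,W)\twoheadrightarrow(\AAG)_W$, chooses a spanning tree consisting of type-(1) edges out of $W$, and then checks via (R6) and (R7) that the loop associated to each remaining edge lies in $(\Omega_\ell\cup\Omega_s)\cap(\AAG)_W$. Your telescoping device is the same computation with the graph-theoretic language stripped away: your $\pi_i$ is the tree path from $W$ to $W_i$, and your $\beta_i=\pi_i^{-1}\alpha_i\pi_{i-1}$ is exactly the loop attached to the edge $\alpha_i$. Your route is more elementary and self-contained; the paper's route has the advantage that the Whitehead-graph formalism is the standard reusable machine for extracting generating sets (and, with more work, presentations) of stabilizers from a peak-reduction statement.
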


\begin{proof}
Let $\overline\Delta$ be the directed multi-graph whose vertices are $k$--tuples of conjugacy classes $W'$ with $|W'|=|W|$, and an edge from $W_1$ to $W_2$, labeled by $\alpha$, if $\alpha\in\Omega_\ell\cup\Omega_s$ with $\alpha(W_1)=W_2$.
Let $\Delta$ be the (undirected) connected component of $W$ in $\overline \Delta$.
This is called the \emph{Whitehead graph} of $W$.
We map the paths of $\Delta$ to $\AAG$ by composing their edge labels; a path from a vertex $W_1$ to a vertex $W_2$ will map to an automorphism $\alpha$ with $\alpha(W_1)=W_2$ (this is true for paths of length $1$ and remains true under concatenations).
In particular, $\pi_1(\Delta,W)\to(\AAG)_W$.
By Proposition~\ref{mp:specialwhalg}, if $\alpha\in(\AAG)_W$, we can write $\alpha=\beta_m\cdots\beta_1$ where each $\beta_i\in\Omega_\ell\cup\Omega_s$ and for each $i$, $0\leq i\leq m$, we have $|\beta_i\cdots\beta_1\cdot W|=|W|$.
Then $\beta_m\cdots\beta_1$ describes a path in $\Delta$ mapping to $\alpha$, and the map $\pi_1(\Delta,W)\to(\AAG)_W$ is surjective.

To get generators for $\pi_1(\Delta,W)$, we pick a maximal tree for $\Delta$.
Since each $|w_i|=1$, we know each vertex of $\Delta$ is the image of $W$ under some permutation of $L$.
Then we can pick our maximal tree $T$ to be a union of edges labeled by type~(1) Whitehead automorphisms originating at $W$.
There is a unique loop in $\pi_1(\Delta,W)$ for each (directed) edge in $\Delta-T$ (the loop leaving $T$ only to cross this edge once); these loops generate $\pi_1(\Delta,W)$, and the images of these loops in $(\AAG)_W$ generate.

If $\alpha$ is a type (2) Whitehead automorphism labeling an edge in $\Delta-T$, then $\alpha$ labels a loop from a vertex $W'$ to itself (if a type (2) Whitehead automorphism changes a vertex $W'$ of $\Delta$, then it lengthens it).
So if $W\neq W'$ there is a type~(1) Whitehead automorphism $\sigma$ labeling the edge in $T$ from $W$ to $W'$, and by Equation~(R6), the automorphism $\sigma \alpha\sigma^{-1}$ is a Whitehead automorphism labeling an edge from $W$ to itself.
If $\alpha$ is a type~(1) Whitehead automorphism labeling an edge in $\Delta-T$ that is not a loop at $W$, then by relations of type (R7), the loop based at $W$ through edges in $T$ and $\alpha$ is redundant with a type~(1) Whitehead automorphism labeling an edge from $W$ to itself.  
So in fact, the loops in $\pi_1(\Delta,W)$ of length $1$ map to a generating set for $(\AAG)_W$.
By definition, they map to $(\Omega_\ell\cup\Omega_s)\cap((\AAG)_W)$.
\end{proof}

\begin{remark}\label{re:pred}
There is another case where a peak-reduction theorem holds for $\AAG$: the author has shown in~\cite{ssraag} that if $w=[a_1,b_1]\cdots[a_k,b_k]$ for distinct $a_1,\cdots a_k$, $b_1,\cdots b_k\in X$, and $\alpha\in\AAG$ with $\alpha(w)=w$, then $\alpha$ can be peak reduced with respect to $w$ by elements of $\Omega$.
\end{remark}

\section{A presentation for $\AAG$}\label{se:pres}
The goal of this section is to prove Theorem~\ref{mt:pres}.
Recall that $\Phi$ is the free group on $\Omega$.
Let $\Phi_\ell<\Phi$ be the subgroup generated by $\Omega_\ell$.
Let $R_\ell=R\cap \Phi_\ell$.
%
%
%
Denote the normal closure of $\langle R_\ell\rangle$ in $\Phi_\ell$ by $\overline{\langle R_\ell\rangle}$.
Say that $w_1$ and $w_2$ in $\Phi_\ell$ are congruent modulo $R_\ell$ if $w_1w_2^{-1}\in \overline {\langle R_\ell\rangle}$.
Similarly, we denote the normal closure of $\langle R\rangle$ in $\Phi$ by $\overline{\langle R\rangle}$ and say that $w_1$ and $w_2$ in $\Phi$ are congruent modulo $R$ if $w_1w_2^{-1}\in\overline{\langle R\rangle}$.
%
\begin{lemma}\label{le:presmach}
Suppose $\alpha,\beta\in \Omega_\ell$ and $[W]$ is a $k$--tuple of conjugacy classes of $A_\Gamma$.
Suppose $\beta\alpha^{-1}$ forms a peak with respect to $[W]$.
Then there exist $\delta_1,\ldots,\delta_k\in \Omega_\ell$ such that, when multiplied in $\Phi_\ell$,
$\beta\alpha^{-1}$ is congruent to $\delta_k\cdots\delta_1$ modulo $R_\ell$
and for each $i,1\leq i< k$, we have:
\[|(\delta_i\cdots\delta_1)\cdot[W]|<|\alpha^{-1}\cdot [W]|\]
\end{lemma}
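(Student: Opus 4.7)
The plan is to revisit the proof of Lemma~\ref{le:machine} case by case and verify that every factorization identity used there, which holds as an equation in $\AAG$, is in fact a congruence in $\Phi_\ell$ modulo $R_\ell$. The peak-lowering inequalities on the $\delta_i$ depend only on the action on conjugacy classes in $A_\Gamma$, so they transfer from the original proof unchanged; only the word-level identities need upgrading.

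First I would catalog the relations used to build the peak-lowering factorizations in the proof of Lemma~\ref{le:machine}. The preamble uses (R8), (R9), (R10) (together with (R1) and (R2)) to swap $\alpha, \beta$ for $\overline\alpha, \overline\beta$ and to shuttle inner-conjugation factors. Case~1 uses (R6). Case~2 uses (R1) and (R3b). Case~3a uses (R2). Case~3b uses (R1), (R2), (R3a), and (R4a). Case~3c uses (R1), (R2), and (R5). Cases~4a and~4b use (R2) to reduce to Case~3. In every one of these identities the Whitehead automorphisms involved are all long-range: $\alpha$ and $\beta$ are long-range by hypothesis, the bar-automorphisms $\overline\alpha, \overline\beta$ are long-range by Lemma~\ref{le:whdef}, the inner conjugations $(L-c^{-1},c)$ are type~(2) long-range, and the permutation $\sigma_{a,b}$ is type~(1) and hence long-range. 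Therefore every instance of (R1), (R2), (R3a), (R3b), (R4a), (R4b), (R5), (R6), (R9), (R10) actually invoked lies in $\Phi_\ell$ and hence in $R_\ell$. Relation (R8) is not in $R$, but as noted in the remark after Proposition~\ref{pr:identities} it follows from (R1) and (R2); its long-range instances therefore lie in $\overline{\langle R_\ell\rangle}$.

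With this catalog established, the proof copies the proof of Lemma~\ref{le:machine} verbatim, replacing every ``$=$'' between long-range words in $\AAG$ by an invocation of the corresponding relation in $R_\ell$. In each case one obtains the same $\delta_1,\ldots,\delta_k\in\Omega_\ell$ as before, so the peak-lowering conclusion follows immediately.

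The main obstacle lies in the preamble: the original argument reduces to a case in which $\alpha$ or $\beta$ has been swapped for $\overline\alpha$ or $\overline\beta$, then converts a peak-lowering factorization of (say) $\overline\beta\alpha^{-1}$ into one of $\beta\alpha^{-1}$ by pre- or post-multiplying by conjugations, using $\beta\alpha^{-1}(\overline\beta\alpha^{-1})^{-1}=(L-b^{-1},b)$ and the more intricate rewriting of $(\overline\beta\alpha^{-1})^{-1}\beta\alpha^{-1}$ in the $b\in A$ subcase as $(L-a^{-1},a)\overline\alpha(L-b,b^{-1})\overline\alpha^{-1}(L-a,a^{-1})$. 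One must verify that each of these rewritings is a congruence in $\Phi_\ell$ modulo $R_\ell$, obtained by concatenating long-range instances of (R1), (R2), (R8), (R9), and (R10). This is routine but carries the heaviest bookkeeping in the proof, and it is the only place where (R9) and (R10) are genuinely needed; the rest of the cases reduce cleanly to relations that are already verified to be long-range.
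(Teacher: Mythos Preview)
Your approach is essentially identical to the paper's: both proofs simply revisit Lemma~\ref{le:machine} case by case, catalog the relations invoked in each case (the preamble via (R8)--(R10), Case~1 via (R6), Cases~2--4 via (R1), (R2), (R3), (R4), (R5)), and observe that every instance involves only long-range automorphisms and hence lies in $R_\ell$. Your additional explicit verification that all the intermediate automorphisms are long-range, and your handling of (R8) as a consequence of (R1) and (R2), are correct refinements that the paper leaves implicit.
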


\begin{proof}
This lemma is a refinement of Lemma~\ref{le:machine}, so to prove it, it is enough to review the proof of Lemma~\ref{le:machine}, noting in each case that the peak-lowering factorization $\delta_k\cdots\delta_1$ is congruent to $\beta\alpha^{-1}$ modulo $R_\ell$.
This will be true if in each case, the only manipulations we apply to elements of $\AAG$ are applications of relations in $R$.
At the start of the proof, we established that if $\alpha=(A,a)$ and $\beta=(B,b)$, we may switch $\alpha$ and $\beta$ or swap $\beta$ with $(L-B-\lkl{b},b^{-1})$.
By the symmetry in the statement, it is again apparent that we may still switch $\alpha$ and $\beta$ if necessary.
In showing we could swap $\beta$ with $(L-B-\lkl{b},b^{-1})$, we used Relations~(\ref{eq:R8})--(\ref{eq:R10}).
In case 1, we used Relation~(\ref{eq:R6}).
We used Relation~(\ref{eq:R1}) in cases 2, 3 and 4.
In case 2, we used Relation~(\ref{eq:R3}b).
In case 3, we used Relation~(\ref{eq:R2}) in sub-case 3a; Relations~(\ref{eq:R2}),~(\ref{eq:R3}a) and~(\ref{eq:R4}a) in sub-case 3b; and Relations~(\ref{eq:R2}) and~(\ref{eq:R5}) in sub-case 3c.
In case 4, we used Relation~(\ref{eq:R2}) and invoked case 3.
These were the only manipulations done to elements of $\AAG$ in that proof, so we are done.  
\end{proof}

The following lemma is similar to Proposition~6.2.5 of Culler-Vogtmann.
\begin{lemma}\label{le:len2tuple}
Let $V$ be a $k$--tuple of conjugacy classes whose elements are all the conjugacy classes in $A_\Gamma$ of length $2$, each appearing once.
If $(A,a)\in\Omega_\ell$ and $|(A,a)\cdot V|\leq |V|$, then $(A,a)$ is trivial or is the conjugation $(L-a^{-1},a)$.
\end{lemma}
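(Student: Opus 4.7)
The plan is to exploit the rigid structure of $V$ (which contains every length-$2$ conjugacy class exactly once) by computing $D_{[V]}((A,a)) = |(A,a)\cdot V| - |V|$ directly via Lemma~\ref{le:dico}. First I would enumerate the length-$2$ cyclic words: each is uniquely represented by an unordered pair $\{x,y\}\subset L$ with $x\neq y^{-1}$. Set $m=|L|$, $\ell=|\lkl{a}|$, $k=|A|$; because $(A,a)\in\Omega_\ell$, one has $A\subset L-\lkl{a}$. Write $A=A_1\sqcup A_2$ with $A_1=A\cap A^{-1}$ of (necessarily even) size $k_1$ and $A_2=A-A^{-1}$ of size $k_2\geq 1$ (since $a\in A_2$).

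The technical heart of the argument is computing $\aco{b}{c}{V,a}$ for $b,c\in L-\lkl{a}$ by direct inspection of the definition of subsegment (which allows patterns $(buc^{-1})^{\pm1}$). I would show: (i) if $b=c$, graphic reduction forbids the pattern, so $\aco{b}{b}{V,a}=0$; (ii) if $b\neq c^{\pm 1}$, only $|u|=0$ contributes and the two distinct cyclic words $bc^{-1}$ and $cb^{-1}$ each provide one match, giving $\aco{b}{c}{V,a}=2$; (iii) if $b=c^{-1}$, the length-$2$ cyclic words $bb$ and $b^{-1}b^{-1}$ give $2+2=4$ matches, and for each $y\in\lkl{a}$ the cyclic words $by$ and $b^{-1}y^{-1}$ each contribute one length-$3$ wrap-around match (with an analogous pair coming from $u=y^{-1}$), yielding $\aco{b}{b^{-1}}{V,a}=4+2\ell$. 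Summing produces $\aco{a}{L}{V,a}=2m$, while
\[\aco{A}{L-A}{V,a}=2\bigl(k_2(1+\ell)+k(m-\ell-k)\bigr),\]
since only the pairs $(b,b^{-1})$ with $b\in A_2$ give the enhanced count, and all other pairs contribute $2$.

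Plugging into Lemma~\ref{le:dico} and setting $s=m-\ell-k$ (with $s\geq 1$ because $a^{-1}\in(L-\lkl{a})-A$), a short manipulation yields
\[\tfrac{1}{2}D_{[V]}((A,a))=(k_2-1)\ell+(k-1)s-k_1.\]
A case analysis on $(k_1,k_2)$ with $k_1\geq 0$ even and $k_2\geq 1$ finishes the proof: $(0,1)$ gives $D=0$ and $A=\{a\}$ (trivial); $(0,k_2)$ with $k_2\geq 2$ gives $D=2(k_2-1)(\ell+s)>0$; $(k_1,1)$ with $k_1\geq 2$ gives $D=2k_1(s-1)$, so $D\leq 0$ forces $s=1$ and hence $A=L-\lkl{a}-\{a^{-1}\}$, which represents the conjugation $(L-a^{-1},a)$; and $(k_1,k_2)$ with both $\geq 2$ gives $\tfrac{1}{2}D\geq \ell+s+1>0$.

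The main obstacle will be correctly accounting for the length-$3$ subsegments that wrap around length-$2$ cyclic words, since these are what produce the $2\ell$ term in $\aco{b}{b^{-1}}{V,a}$. This term is essential: the $\ell$-dependences in $\aco{A}{L-A}{V,a}$ and in $\aco{a}{L}{V,a}$ must cancel precisely in the conjugation case, reflecting the fact that conjugation by $a$ fixes every conjugacy class regardless of $|\lkl{a}|$. Once this bookkeeping is pinned down, the rest of the proof is straightforward algebra.
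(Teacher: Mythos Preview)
Your approach is correct and follows essentially the same strategy as the paper: compute $D_{[V]}((A,a))$ combinatorially and show it is nonnegative with equality only for the identity and the conjugation. The paper organizes the count differently---partitioning $L$ into seven pieces, computing $D_{[bc]}((A,a))$ for each individual length-$2$ class via Lemma~\ref{le:obvadj}, and then summing using the sizes of the pieces---whereas you use Lemma~\ref{le:dico} and compute the aggregate counters $\aco{A}{L-A}{V,a}$ and $\aco{a}{L}{V,a}$ directly; both routes produce the same polynomial. Your wrap-around bookkeeping (the $2\ell$ term in $\aco{b}{b^{-1}}{V,a}$) is correct and is exactly what makes Lemma~\ref{le:dico} agree with the obvious-representative count on length-$2$ words containing a letter of $\lkl{a}$; the paper sidesteps this subtlety by working word-by-word with Lemma~\ref{le:obvadj}. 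One small correction: in your last case ($k_1\geq 2$, $k_2\geq 2$) the bound $\tfrac{1}{2}D\geq \ell+s+1$ is not what falls out---for instance with $k_1$ large and $s=1$ you only get $\tfrac{1}{2}D\geq (k_2-1)(\ell+1)$---but since $k_2\geq 2$ this is still strictly positive, so the conclusion stands.
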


\begin{proof}
We partition $L$ into the following seven sets:
\begin{align*}
L=&(A\cap A^{-1})+ (A-A^{-1}-a)+ (A^{-1}-A-a^{-1})\\
&\quad + (L-\lkl{a}-A\cup A^{-1})+ \lkl{a}+ \{a\} + \{a^{-1}\}
\end{align*}
If $bc$ is a cyclic word of length $2$ (not necessarily with $b\neq c$), then 
we can use Lemma~\ref{le:obvadj} to compute $D_{[bc]}((A,a))$  according to the sets in the partition of $L$ that $b$ and $c$ are members of.
Note that since $bc$ is a cyclic word, we may switch $b$ with $c$ in our enumeration of cases.
Also note that if both $b,c\in (L-\lkl{a}-A\cup A^{-1})+ \lkl{a}+\{a\}+\{a^{-1}\}$, then $D_{[bc]}((A,a))=0$.
We list the remaining cases in Table~\ref{ta:DV}.
\begin{table}[ht!]
\begin{tabular}{|r|ccc|}
\hline
\backslashbox{$b$}{$c$} & $A\cap A^{-1}$ & $A-A^{-1}-a$ & $A^{-1}-A-a^{-1}$ \\
\hline 
$A\cap A^{-1}$ & 0 & & \\
$A-A^{-1} -a$ & 1 & 2 & \\
$A^{-1}-A-a^{-1}$ & 1 & 0 & 2 \\
$L-\lkl{a}-A\cup A^{-1} $ & 2 & 1 & 1\\
$\lkl{a}$ & 0 & 1 & 1\\
$\{a\}$ & 0 & 1 & -1 \\
$\{a^{-1}\}$ & 0 & -1 & 1 \\
\hline
\end{tabular}
\caption{The value of $D_{[bc]}((A,a))$ as $b$ and $c$ are in different subsets of $L$.}
\label{ta:DV}
\end{table} 

As usual, $n=|X|$.
Let $m=|X-\lk(\pg{a})|$, let $x=\frac{1}{2}|A\cap A^{-1}|$, and let $y=|A-A^{-1}-a|=|A^{-1}-A-a^{-1}|$.
Then $|L-\lkl{a}-A\cup A^{-1}|=2(m-x-y)$.
We list the number of conjugacy classes appearing in $V$ of the form $[bc]$ as $b$ and $c$ are in the different subsets of $L$ in Table~\ref{ta:Vsubs}, leaving out the cases in which $D_{[bc]}((A,a))=0$. 

\begin{table}[ht!]
\begin{tabular}{|r|ccc|}
\hline
\backslashbox{$b$}{$c$} & $A\cap A^{-1}$ & $A-A^{-1}-a$ & $A^{-1}-A-a^{-1}$ \\
\hline 
$A\cap A^{-1}$ & - & & \\
$A-A^{-1}-a $ & $2xy$ & $\frac{y(y+1)}{2}$ & \\
$A^{-1}-A-a^{-1}$ & $2xy$ & - & $\frac{y(y+1)}{2}$ \\
$L-\lkl{a}-A\cup A^{-1} $ & $4x(m-x-y)$ & $2y(m-x-y)$ & $2y(m-x-y)$\\
$\lkl{a}$ & - & $y(n-m)$ & $y(n-m)$\\
$\{a\}$ & - & $y$ & $y$ \\
$\{a^{-1}\}$ & - & $y$ & $y$ \\
\hline
\end{tabular}
\caption{The number of conjugacy classes in $V$ of the form $[bc]$, as $b$ and $c$ are in different subsets of $L$.}
\label{ta:Vsubs}
\end{table} 

We compute $D_V((A,a))$ from the two tables by taking products and summing:
\[D_V((A,a))=4xy+8x(m-x-y)+2y(y+1)+4y(m-x-y)+4y(n-m)\]
Note that the contribution to $D_V((A,a))$ from the entries in $V$ containing a copy of $a$ or $a^{-1}$ cancel each other out.
Since the numbers $x$, $y$, $(m-x-y)$, and $(n-m)$ are all nonnegative (they count the cardinalities of sets), we know that $D_V((A,a))$ cannot be negative; further, for $D_V((A,a))$ to be zero, we must have each of the terms equal to zero.
This implies that $y=0$, and that $x(m-x)=0$, which means that $(A,a)$ is either the trivial automorphism $(\{a\},a)$ or the conjugation $(L-\lkl{a}-a^{-1},a)$.
\end{proof}

\begin{lemma}\label{le:innisraag}
The group of inner automorphisms $\Inn A_\Gamma$ is a right-angled Artin group.
Specifically, if $Z$ is the intersection of the stars in $\Gamma$ of the elements of $X$, and $\Gamma'$ is the full subgraph of $\Gamma$ on the vertices $X-Z$, then the map sending $x\in X-Z$ to conjugation by $x$ in $A_\Gamma$ is an isomorphism $A_{\Gamma'}\isomarrow \Inn A_\Gamma$.
\end{lemma}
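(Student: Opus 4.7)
The plan is to first identify $Z$ concretely. Since $Z=\bigcap_{x\in X}\st(x)$, a vertex $z$ lies in $Z$ precisely when for every $x\in X$ either $z=x$ or $\adj{x}{z}$; that is, $Z$ consists of those vertices adjacent to every other vertex of $\Gamma$. In particular $Z$ is a clique, so $\langle Z\rangle\cong\Z^{|Z|}$. Moreover the relation set $R_\Gamma$ splits cleanly: each $z\in Z$ commutes with every other generator of $A_\Gamma$, and the remaining relations are exactly the relations of $\Gamma'$. Comparing presentations yields a direct product decomposition $A_\Gamma\cong A_{\Gamma'}\times\langle Z\rangle$ via the obvious inclusions, and in particular the inclusion $A_{\Gamma'}\hookrightarrow A_\Gamma$ is injective.

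Next I would define $\phi\co A_{\Gamma'}\to\Inn A_\Gamma$ by sending each $x\in X-Z$ to conjugation by $x$ in $A_\Gamma$. For $x,y\in X-Z$ with $\adj{x}{y}$ in $\Gamma'$ (equivalently, in $\Gamma$, since $\Gamma'$ is a full subgraph), $xy=yx$ in $A_\Gamma$, so the corresponding inner automorphisms commute. Hence the defining relations of $A_{\Gamma'}$ are satisfied and $\phi$ is a well-defined homomorphism.

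Surjectivity is then immediate: any inner automorphism has the form $C_g$ for some $g\in A_\Gamma$, and using the decomposition $g=g'z$ with $g'\in A_{\Gamma'}$ and $z\in\langle Z\rangle$, centrality of $z$ gives $C_g=C_{g'}=\phi(g')$.

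The main obstacle is injectivity, which reduces to pinning down the center of $A_\Gamma$. Suppose $\phi(w)=1$ for $w\in A_{\Gamma'}$; viewing $w$ inside $A_\Gamma$, this says $w$ commutes with every generator $x\in X$. By Theorem~\ref{th:centralizer}, $w\in\langle\stl{x}\rangle$ for every $x$. Any graphically reduced representative of $w$ can be produced from a word in $\stl{x}$ by commutation moves and cancellations, neither of which introduces letters outside $\stl{x}$, so $\supp w\subset\st(x)$ for every $x$, forcing $\supp w\subset Z$. Hence $w\in\langle Z\rangle$, and under the direct product decomposition $w\in A_{\Gamma'}\cap\langle Z\rangle=\{1\}$. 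So $\phi$ is an isomorphism.
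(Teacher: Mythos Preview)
Your proof is correct and follows essentially the same approach as the paper's: both use the Servatius centralizer theorem to identify the center of $A_\Gamma$ as $\langle Z\rangle$, then use the decomposition $A_\Gamma\cong A_{\Gamma'}\times\langle Z\rangle$ (equivalently, $A_{\Gamma'}\cong A_\Gamma/Z(A_\Gamma)$) together with the standard isomorphism $A_\Gamma/Z(A_\Gamma)\cong\Inn A_\Gamma$. The paper simply states these steps tersely, while you unpack them in detail---in particular, your support argument making explicit why $\bigcap_{x\in X}\langle\stl{x}\rangle=\langle Z\rangle$ fills in a step the paper leaves implicit.
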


\begin{proof}
By the Servatius centralizer theorem (Theorem~\ref{th:centralizer}), we know that the center $Z(A_\Gamma)$ is $\langle Z\rangle <A_\Gamma$.
The obvious inclusion $A_{\Gamma'}\into A_\Gamma$ induces an isomorphism $A_{\Gamma'}\isomarrow A_\Gamma/Z(A_\Gamma)$; composing this map with the usual isomorphism $A_\Gamma/Z(A_\Gamma)\isomarrow\Inn A_\Gamma$ gives the isomorphism in the statement.
\end{proof}

The proof of the following proposition is based on McCool's argument from~\cite{mcpres}.
\begin{proposition}\label{pr:lrpres}
The group $\langle \Omega_\ell\rangle<\AAG$ has the presentation $\langle\Omega_\ell|R_\ell\rangle$.
\end{proposition}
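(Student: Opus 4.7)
Since every relation in $R_\ell$ holds in $\AAG$ by Proposition~\ref{pr:identities}, the inclusion $\Omega_\ell \hookrightarrow \AAG$ induces a surjective homomorphism $\pi \co \langle \Omega_\ell \mid R_\ell \rangle \twoheadrightarrow \langle \Omega_\ell \rangle$. The plan is to show that $\pi$ is injective, i.e., that every $w \in \Phi_\ell$ representing the identity in $\AAG$ lies in the normal closure $\overline{\langle R_\ell \rangle}$. Following McCool~\cite{mcpres}, I will use the $k$--tuple $V$ of all length-2 conjugacy classes in $A_\Gamma$, to which Lemma~\ref{le:len2tuple} applies.

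The first step is to apply Lemma~\ref{le:presmach} iteratively to replace $w$ by a word $w' \equiv w \pmod{R_\ell}$ admitting a factorization $w' = \beta_m \cdots \beta_1$ that is peak-reduced with respect to $V$. Setting $W_i = (\beta_i \cdots \beta_1) \cdot V$, we have $W_0 = W_m = V$. A direct combinatorial argument shows $|W_i| \leq |V|$ for all $i$: otherwise the leftmost index at which the sequence attains its maximum would be a strict interior peak, contradicting peak-reducedness. Then by induction on $i$, I claim each $\beta_i$ is either a trivial Whitehead automorphism or a conjugation $(L - a^{-1}, a)$: if $W_{i-1} = V$, then $|W_i| = |\beta_i \cdot V| \geq |V|$ by Lemma~\ref{le:len2tuple}, which combined with $|W_i| \leq |V|$ forces $|W_i| = |V|$ and thus $\beta_i$ is trivial or a conjugation by Lemma~\ref{le:len2tuple} again; since trivial maps and conjugations preserve every conjugacy class, $W_i = V$ and the induction continues.

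Once each $\beta_i$ is trivial or a conjugation, I apply relation (R2) (with $B = \{a\}$) to eliminate the trivial $\beta_i$'s modulo $R_\ell$, reducing $w'$ to a product $u$ of conjugations $(L - a^{-1}, a)$ that represents the identity in $\Inn A_\Gamma$. By Lemma~\ref{le:innisraag}, $\Inn A_\Gamma$ is itself a right-angled Artin group, whose defining relations are the commutations $(L - a^{-1}, a)(L - b^{-1}, b) = (L - b^{-1}, b)(L - a^{-1}, a)$ for $\adj{a}{b}$. Each such commutation can be derived from relations in $R_\ell$ (by using (R8) to rewrite a conjugation in terms of a pair of Whitehead automorphisms, then applying (R9) and (R10) to commute the factors past the other conjugation), so applying these commutations reduces $u$, and hence $w$, to the identity modulo $R_\ell$. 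The main obstacle will be the peak-reduction analysis in the middle paragraph: establishing the global bound $|W_i| \leq |V|$ from peak-reducedness and maintaining the inductive invariant $W_{i-1} = V$, which together make Lemma~\ref{le:len2tuple} usable at each step.
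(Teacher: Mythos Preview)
Your argument has a genuine gap: you have forgotten that $\Omega_\ell$ contains all type~(1) Whitehead automorphisms (permutations of $L$ coming from inversions and graph automorphisms), not just the type~(2) ones of the form $(A,a)$. Lemma~\ref{le:len2tuple} is stated only for type~(2) automorphisms, so your inductive claim ``each $\beta_i$ is either trivial or a conjugation $(L-a^{-1},a)$'' fails the moment some $\beta_i$ is a nontrivial type~(1) element. Such elements certainly occur: they preserve lengths of conjugacy classes, so they survive peak reduction; moreover sub-case~3c in the proof of Lemma~\ref{le:machine} actually introduces the type~(1) element $\sigma_{a,b}$ during peak lowering. Consequently your invariant $W_{i-1}=V$ also fails, since a type~(1) automorphism permutes the entries of $V$ rather than fixing them.

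The paper repairs exactly this point. It weakens the inductive invariant to ``$W_i$ is a $k$--tuple containing every length-$2$ conjugacy class once'' (so Lemma~\ref{le:len2tuple} still applies), and concludes that each $\alpha_i$ is trivial, a conjugation, \emph{or} a type~(1) Whitehead automorphism. It then uses relations of type~(R6) to push all type~(1) factors to one side, obtaining $w\equiv\beta\gamma$ with $\beta$ a word in type~(1) elements and $\gamma$ a word in conjugations. Since type~(1) automorphisms act faithfully on $H_\Gamma$ and $\gamma$ lies in the kernel of the homology representation, $\beta$ represents the identity and is killed by relations of type~(R7). The remaining $\gamma$ is then handled via Lemma~\ref{le:innisraag}, where the RAAG commutator relations among conjugations are instances of~(R3b) (after writing $(L-a^{-1},a)=(L-\lkl{a}-a^{-1},a)$). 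Apart from this omission, your outline is essentially the paper's argument.
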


\begin{proof}
We already know that every relation in $R_\ell$ is an identity of $\langle\Omega_\ell\rangle$, so it is enough to show that every word representing the trivial element in $\langle\Omega_\ell\rangle$ is a product of conjugates of elements of $R_\ell$.
Suppose $w\in\Phi_\ell$ represents the trivial element in $\langle\Omega_\ell\rangle <\AAG$.
We claim that there is $w'\in\overline{\langle R_\ell\rangle}$ such that $ww'$ is a product of type~(1) Whitehead automorphisms and conjugations.
Let $V_0$ be a $k$--tuple containing each conjugacy class of $A_\Gamma$ of length $2$ once.

We will prove the claim by induction on the peaks of $w$ with respect to $V_0$; specifically, inducting on the number of points between peaks of maximal height and also on the maximum height of peaks.
Write $w=\alpha_j\cdots\alpha_1$ for $\alpha_j,\ldots,\alpha_1\in\Omega_\ell$.
In our base case, we assume that $\alpha_j\cdots\alpha_1$ is a factorization of $w$ that is peak reduced with respect to $V_0$.
By Lemma~\ref{le:len2tuple}, we know that $V_0$ is a minimal-length representative of its $\AAG$ orbit.
So since our factorization of $w$ is peak reduced, for each $i$, we have $|(\alpha_i\cdots\alpha_1)\cdot V_0|=|V_0|$.
We claim that for each $i$, $(\alpha_i\cdots\alpha_i)\cdot V_0$ is a $k$--tuple containing each conjugacy class of length 2 once. 
This is true if $i=0$ by assumption.
Now assume it for $i-1$; since $|(\alpha_i\cdots\alpha_1)\cdot V_0|=|(\alpha_{i-1}\cdots\alpha_1)\cdot V_0|$, we know by Lemma~\ref{le:len2tuple} that $\alpha_i$ is then either trivial, a conjugation, or a type~(1) Whitehead automorphism and the statement is then true for $i$.
So in our base case, $w$ is already a product of type~(1) Whitehead automorphisms and conjugations.

For the inductive step, suppose that $\alpha_k\cdots\alpha_1$ has peaks with respect to $V_0$.
Let $\alpha_i$ be a peak of maximal height.
Then by Lemma~\ref{le:presmach}, there are $\delta_1,\ldots,\delta_m\in\Omega_\ell$ such that $(\alpha_{i+1}\alpha_i)^{-1}\delta_m\cdots\delta_1\in \overline{\langle R_\ell\rangle}$ and such that we can lower the peak at $\alpha_i$ in $\alpha_k\cdots\alpha_1$ by substituting in $\delta_m\cdots\delta_1$ for $\alpha_{i+1}\alpha_i$.
So we define:
\[w_1=(\alpha_{i-1}\cdots\alpha_1)^{-1}(\alpha_{i+1}\alpha_i)^{-1}\delta_m\cdots\delta_1(\alpha_{i-1}\cdots\alpha_1)\in \overline{\langle R_\ell \rangle}\]
Then $ww_1=\alpha_k\cdots\alpha_{i+2}\delta_m\cdots\delta_1\alpha_{i-1}\cdots\alpha_1$ has a smaller number of points between maximal-height peaks that $w$ with respect to $V_0$, or its maximal-height peak is shorter.
So we have reduced the peaks of $ww_1$, and we invoke the inductive hypothesis for $ww_1$: we have a $w_2\in\overline{\langle R_\ell\rangle}$ such that $ww_1w_2$ is a product of type~(1) Whitehead automorphisms and conjugations. 
So $w_1w_2\in\overline{\langle R_\ell \rangle}$ satisfies the conclusions of our inductive claim.

So we have that $w$ is congruent modulo $R_\ell$ to a product of type (1) Whitehead automorphisms and conjugations.
Then by applying instances of Relation~(\ref{eq:R6}), we know that $w$ is congruent to a product $\beta\gamma$ where $\beta$ is a product of type~(1) Whitehead automorphisms and $\gamma$ is a product of conjugation automorphisms in $\Omega_\ell$.
The subgroup of $\AAG$ generated by type~(1) Whitehead automorphisms acts faithfully on $\Aut H_\Gamma$, so since $\alpha$ maps to the trivial element of $\AAG$ and $\gamma$ is in the kernel of the homology representation, we deduce that $\beta$ represents the trivial automorphism.
So by some instances of Relation~(R7), we know that $w$ is congruent modulo $R_\ell$ to $\gamma$, which represents the trivial automorphism in $\Inn A_\Gamma$.

Let $Z$ and $\Gamma'$ be as in Lemma~\ref{le:innisraag}.
Map the free group on $X-Z$ to $\Phi_\ell$ by sending $a\in X-Z$ to $(L-\lkl{a}-a^{-1},a)$.
This sends the relations from the right-angled Artin group presentation of $A_{\Gamma'}$ to instances of Relation~(\ref{eq:R3}b).
Of course, this map descends to the isomorphism $A_{\Gamma'}\isomarrow \Inn A_\Gamma$ in Lemma~\ref{le:innisraag}.
Then since $\gamma$ represents the trivial element of $\Inn A_\Gamma$, it corresponds to an element $w_\gamma$ of the free group on $X-Z$ that maps to the trivial element of $A_{\Gamma'}$.
This $w_\gamma$ is a product of conjugates of the relations from the presentation of $A_\Gamma$, so $\gamma$ is a product of conjugates of instances of Relation~(\ref{eq:R3}b).
So $\gamma$ is in $\overline{\langle R_\ell\rangle}$, and therefore $w$ is in $\overline{\langle R_\ell \rangle}$.
\end{proof}

If $a,b\in X$ with $a\in\lkl{b}$ and $a\sim b$, then the type~(1) Whitehead automorphism $\sigma_{a,b}$ of Relation~(\ref{eq:R5}) exists.
According to that relation, we have $\sigma_{a,b}\in \langle\Omega_s\rangle$.
Let $P_s\subset \Omega$ be the finite subgroup of $\AAG$ generated by such $\sigma_{a,b}$ as $a$ and $b$ range over all adjacent domination-equivalent pairs in $X$.
Let $\Phi_s$ be the free subgroup of $\Phi$ generated by $\Omega_s\cup P_s$.
Let $R_s=R\cap \Phi_s$.
\begin{proposition}\label{pr:srwhpres}
The group $\langle \Omega_s\rangle$ has the presentation $\langle\Omega_s\cup P_s|R_s\rangle$.
\end{proposition}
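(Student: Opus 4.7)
The plan is to leverage the explicit presentation of $\langle\Omega_s\rangle$ coming from Corollary~\ref{co:tvgenstruct} together with the fact, from \partref{it:srinj}, that the homology representation embeds $\langle\Omega_s\rangle$ in $\Aut H_\Gamma$. By Corollary~\ref{co:tvgenstruct}, the image of $\langle\Omega_s\rangle$ has a presentation of the form given in Proposition~\ref{pr:linpres}, with generators $E_{a,b}=(\tau_{a,b})_*$ for adjacent-dominating pairs $a\geq' b$ and the four families of relations (\ref{it:r1})--(\ref{it:r4}) listed there. Since each $(\tau_{a,b})_*$ is the image of the short-range transvection $\tau_{a,b}=(\{a,b\},a)\in\Omega_s$, this gives a presentation of $\langle\Omega_s\rangle$ itself, whose generators form a subset of $\Omega_s$. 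By Proposition~\ref{pr:identities} every relation in $R_s$ holds in $\AAG$, so there is a surjection $\pi\co \langle\Omega_s\cup P_s \mid R_s\rangle \to \langle\Omega_s\rangle$, and the goal is to show $\pi$ is injective.

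First I would reduce the generating set modulo $R_s$ to the short-range transvections alone. Using Relation~(\ref{eq:R1}) to normalize multipliers and then repeatedly applying Relation~(\ref{eq:R2}), every $(A,a)\in\Omega_s$ is congruent mod $R_s$ to a product of short-range transvections. Moreover, Relation~(\ref{eq:R5}) with $A=\{a,b\}$ expresses each $\sigma_{a,b}\in P_s$ as a product of three short-range transvections modulo $R_s$. This reduces the injectivity problem to verifying that each of the four relation families in Proposition~\ref{pr:linpres}, translated back to words in short-range transvections, lies in the normal closure of $R_s$ inside the free group on $\Omega_s\cup P_s$.

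For the four families, the commutator relations of type (\ref{it:r1}) follow from Relation~(\ref{eq:R3}) applied to the pair $\tau_{a,b}$, $\tau_{c,d}$, with the side conditions $b\neq c$ and $a\neq d$ translating directly into the hypotheses of (R3a) or (R3b) (depending on whether $\adj{b}{c}$). The Steinberg-type relations of type (\ref{it:r2}) come from Relation~(\ref{eq:R4}), with Relation~(\ref{eq:R2}) used to absorb the extra factor on the right-hand side of (R4) into the commutator formula. The $\SL_2(\Z)$-type relations of types (\ref{it:r3}) and (\ref{it:r4}) are the main obstacle: here one must combine Relation~(\ref{eq:R5}) (which rewrites $\tau_{a,b}\tau_{b,a}^{-1}\tau_{a,b}$ in terms of $\sigma_{a,b}$ and another transvection) with Relations~(\ref{eq:R6}), (\ref{eq:R7}), and the multiplication table of the finite subgroup generated by $P_s$, to derive the order-$4$ and braid-like identities $(E_{a,b}E_{b,a}^{-1}E_{a,b})^4=1$ and its analogue. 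The hard part is chaining these rewrites carefully, especially when an adjacent-domination class $C_i$ has more than two elements so that several $\sigma_{a,b}$'s interact and (\ref{eq:R6}) must be used to conjugate one by another. Once these derivations are in place, the presentation $\langle \Omega_s\cup P_s \mid R_s\rangle$ maps onto the presentation from Proposition~\ref{pr:linpres}, which is isomorphic to $\langle\Omega_s\rangle$, so $\pi$ is an isomorphism.
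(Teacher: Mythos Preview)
Your approach is essentially the same as the paper's: both use the presentation of $\langle\Omega_s\rangle$ from Corollary~\ref{co:tvgenstruct} and Proposition~\ref{pr:linpres}, reduce the generating set $\Omega_s\cup P_s$ modulo $R_s$ to the short-range transvections, and then verify that each of the four relation families (\ref{it:r1})--(\ref{it:r4}) already holds in $\langle\Omega_s\cup P_s\mid R_s\rangle$ using specific relations from $R$. Two small points: first, your worry about ``more than two elements'' is misplaced, since relation~(\ref{it:r4}) only arises when $|C_i|=2$ and relation~(\ref{it:r3}) involves only the pair $a,b$; second, the paper's computation from (\ref{eq:R5}) with $A=\{a,b\}$ actually gives $\tau_{a,b}\tau_{b,a}^{-1}\tau_{a,b}=\sigma_{a,b}$ on the nose (using that for adjacent $a,b$ the automorphisms $\tau_{a^{-1},b^{-1}}$ and $\tau_{a,b}$ coincide as elements of $\Omega$), so no extra transvection appears and (\ref{it:r3}) follows immediately from (R7).
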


\begin{proof}
Let $G=\langle\Omega_s\rangle<\AAG$
 and let $\tilde G=\langle \Omega_s\cup P_s|R_s\rangle$.
We know that each of the relations in $R_s$ is an identity in $\tilde G$, so we have homomorphism $G\to \tilde G$ by sending each element of $\Omega_s$ to its own coset.
We will show this map is an isomorphism by constructing an inverse.

By Corollary~\ref{co:tvgenstruct}, the group $G$ has a presentation where the generators are $\{E_{a,b}|\text{$a,b\in X$, $a\in\lkl{b}$, and $a\geq b$}\}$ and the relations are all the relations of the forms~(1)--(4) from Proposition~\ref{pr:linpres}.
This presentation identifies each $E_{a,b}$ with the corresponding $\tau_{a,b}$. 

By Relations~(\ref{eq:R1}), (\ref{eq:R2}), (\ref{eq:R5}) and~(R7), we know $\tilde G$ is generated by the transvections $(\{a,b\},a)$ with $a\in\lkl{b}$ and $a\geq b$.
We map $\tilde G$ to $G$ by sending each $(\{a,b\},a)$  to the corresponding $\tau_{a,b}$.
We will show that this is a homomorphism by checking the relations of 
our presentation for $G$ already hold in $\tilde G$.
Relation~(\ref{it:r1}) follows from Relations~(\ref{eq:R2}) and~(\ref{eq:R3}b).
Relation~(\ref{it:r2}) follows from Relation~(\ref{eq:R4}b).
For any $a,b\in X$ with $a\in\lkl{b}$ and $a\sim b$, we know from Relation~(\ref{eq:R5}) that $\tau_{a,b}\tau_{b,a}^{-1}\tau_{a,b}$ is $\sigma_{a,b}$, which has order 4 by Relation~(R7) (here we are using that $\tau_{a^{-1},b^{-1}}=\tau_{a,b}$, which holds because $a\in\lkl{b}$, and that $\tau_{a^{-1},b}=\tau_{a,b}^{-1}$).
This means that Relation~(\ref{it:r3}) already holds in $\tilde G$.
By Relation~(\ref{eq:R5}), $(\tau_{a,b}\tau_{b,a}^{-1}\tau_{a,b}\tau_{b,a})^3$ is $(\sigma_{a,b}\tau_{b,a})^3$, which is $\tau_{a,b^{-1}}\tau_{b^{-1},a^{-1}}\tau_{a^{-1},b}\sigma_{a,b}^3$ by Relation~(\ref{eq:R6}), which is $\sigma_{a,b}^{-1}\sigma_{a,b}^3$ by Relation~(\ref{eq:R5}) (and using the facts that $\tau_{a,b^{-1}}=\tau_{a,b}^{-1}$, $\tau_{b^{-1},a^{-1}}=\tau_{a^{-1},b}^{-1}$ and $\tau_{a^{-1},b}=\tau_{a^{-1},b^{-1}}^{-1}$).
Then Relation~(\ref{it:r4}) already holds in $\tilde G$.

So we map $\tilde G$ to $G$ homomorphically by sending $(\{a,b\},a)\in\Omega_s$ to $E_{a,b}$.
It is apparent (from looking at the action on generating sets) that this homomorphism is the inverse to the homomorphism $G$ to $\tilde G$ that sends each element of $\Omega_s\cup P_s$ to its own coset.
So $\langle \Omega_s\rangle<\AAG$ has the presentation $\langle \Omega_s\cup P_s|R_s\rangle$.
\end{proof}

\begin{proposition}\label{pr:pressort}
Every $w\in\Phi$ is congruent modulo $R$ to a product $uv$ for some $u\in\Phi_\ell$ and $v\in\Phi_s$.
\end{proposition}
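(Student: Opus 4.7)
The plan is to lift the sorting argument of Section~\ref{ss:sorting}, which proved \partref{it:complements}, to the free group $\Phi$, interpreting every identity appearing there as a congruence modulo $R$. Because that argument produces factorizations with the short-range part on the left and the long-range part on the right, the opposite ordering demanded by the proposition will be obtained by applying the lifted result to $w^{-1}$ and then taking inverses in $\Phi$.

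First I would verify that each sorting substitution of Definition~\ref{de:sort}, together with the basic decomposition $\alpha = s(\alpha)\ell(\alpha)$ from Lemma~\ref{le:l&s}, represents a congruence modulo $R$ and not merely an identity in $\AAG$. A rereading of the proof of Lemma~\ref{le:validsubs} shows that the only relations invoked are (R1), (R2), (R3a), (R3b), (R4a), (R4b), (R6), (R8), (R9), and (R10); all of these lie in $R$ with the single exception of (R8), which by the remark following Definition~\ref{de:identities} is itself an immediate consequence of (R1) and (R2) and hence holds modulo $R$. With this verification in hand, the inductions of Lemma~\ref{le:1sr}, Lemma~\ref{le:1lr}, and the proof of \partref{it:complements} go through verbatim inside $\Phi$ modulo $R$, yielding that any $w'\in\Phi$ is congruent modulo $R$ to a product $v'u'$ with $v'\in\Phi_s$ and $u'\in\Phi_\ell$.

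Applying this to $w^{-1}$ gives $w^{-1}\equiv v'u' \pmod{R}$ for some $v'\in\Phi_s$, $u'\in\Phi_\ell$. Since $\overline{\langle R\rangle}$ is a normal subgroup of $\Phi$ and hence closed under taking inverses, this yields $w \equiv (u')^{-1}(v')^{-1} \pmod{R}$; setting $u=(u')^{-1}\in\Phi_\ell$ and $v=(v')^{-1}\in\Phi_s$, which is valid because $\Phi_\ell$ and $\Phi_s$ are subgroups of $\Phi$, produces the required decomposition. The main obstacle will be the bookkeeping in the first step: each case in the proof of Lemma~\ref{le:validsubs} must be inspected individually to confirm that only relations in $R$---or easy consequences such as (R8)---are invoked, and that no implicit appeal is made to the structure of $\AAG$ beyond these relations. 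The remainder of the argument is then a formal reinterpretation of Section~\ref{ss:sorting} inside $\Phi$, using that congruence modulo $R$ is an equivalence relation preserved under multiplication.
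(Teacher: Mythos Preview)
Your approach is essentially the paper's: lift the sorting argument of Section~\ref{ss:sorting} to $\Phi$ by observing that every manipulation in Lemma~\ref{le:validsubs} is an application of a relation in $R$ (or, in the case of (R8), an immediate consequence of (R1) and (R2)). The paper's own proof says exactly this in two sentences and nothing more.

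You are, however, more careful than the paper on one point. Part~(\ref{it:complements}) of Theorem~\ref{mt:threeparts} and the sorting substitutions push short-range letters to the \emph{left}, producing a factorization $w_s w_\ell$; the proposition asks for $uv$ with $u\in\Phi_\ell$ on the left and $v\in\Phi_s$ on the right. The paper's proof does not comment on this reversal, whereas you handle it explicitly by applying the lifted sorting to $w^{-1}$ and inverting. That step is correct: $\Phi_\ell$ and $\Phi_s$ are subgroups of $\Phi$, and $\overline{\langle R\rangle}$ is normal, so congruence is preserved under inversion. Your extra paragraph is a genuine clarification of something the paper glosses over, and the ordering you produce is indeed the one needed in the proof of Theorem~\ref{mt:pres}.
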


\begin{proof}
This proposition is a refinement of \partref{it:complements}.
The only manipulations of elements of $\AAG$ done in that proof are through the sorting substitutions in Definition~\ref{de:sort}.
Each of the sorting substitutions comes from applications of relations from $R$, as in Lemma~\ref{le:validsubs}.
So the entire argument goes through for $\langle \Omega|R\rangle$.
\end{proof}

\begin{proof}[Proof of Theorem~\ref{mt:pres}]
We have already shown that all the relations in $R$ are identities of $\AAG$ (Proposition~\ref{pr:identities}), so it is enough to show that any element of $\Phi$ representing the trivial element of $\AAG$ is in $\overline{\langle R\rangle}$.
Let $w\in\Phi$ represent the trivial element of $\AAG$.
By Proposition~\ref{pr:pressort}, $w$ is congruent modulo $R$ to a product $uv$ for $u\in\Phi_\ell$ and $v\in \Phi_x$.
Let $[u]\in\langle\Omega_\ell\rangle$ and $[v]\in\langle\Omega_s\rangle$ denote the elements of $\AAG$ they represent.

Let $W_0$ be the elements of $X$ as an $n$--tuple of conjugacy classes.
Suppose that $[v]$ is not a type~(1) Whitehead automorphism; then $[v]$ sends $W_0$ to a strictly longer $n$--tuple.
By \partref{it:lrwhalg}, we peak reduce $[u]$ with respect to $[v]\cdot W_0$.
Since $[u][v]$ is trivial, $[u]$ sends $[v]\cdot W_0$ to $W_0$; since we have peak reduced $[u]$, the first automorphism $\alpha\in\Omega_\ell$ in our peak-reduced factorization of $[u]$ shortens $[v]\cdot W_0$.
However, this contradicts Lemma~\ref{le:shortensrgen}.

So $[v]$ is a type~(1) Whitehead automorphism, which we write as $\sigma$.
Then $w$ is congruent to $u\sigma^{-1}\sigma v$ modulo $R$.
From Proposition~\ref{pr:srwhpres}, we know that $u\sigma^{-1}$ is a product of conjugates of members of $R_s\subset R$, and from Proposition~\ref{pr:lrpres}, we know that $\sigma v$ is a product of conjugates of members of $R_\ell\subset R$.
So $w\in\overline{\langle R\rangle}$.
\end{proof}

\section{Closing Remarks}
The applications of peak reduction on $F_n$ mentioned in the introduction all suggest further applications of Theorem~\ref{mt:threeparts}.
Firstly, peak reduction can be used to get finite generation and finite presentation results for stabilizers of $k$--tuples of conjugacy classes in $\Aut F_n$, as in McCool~\cite{mcpres}.
Along these lines, the author has used Theorem~\ref{mt:threeparts} in~\cite{ssraag} to show that an analog of the mapping class group of a surface inside $\AAG$ is finitely generated.
Generally, one could obtain further results similar to Corollary~\ref{co:fixvertsgenset} by proving propositions similar to Proposition~\ref{mp:specialwhalg}, \emph{i.e.} finding additional special cases where peak reduction works on all of $\AAG$.

Peak reduction on the free group $F_n$ makes an algorithm possible that determines whether two $k$--tuples of conjugacy classes in $F_n$ are in the same $\Aut F_n$ orbit (and makes it possible to find an automorphism taking one to the other, if it exists).
Please see Lyndon--Schupp~\cite{ls}, Chapter~1, Proposition~4.19 for a description of this algorithm.
As for free abelian groups, row-reduction lets us transform $k$--tuples of elements of $\Z^n$ standard representatives of their $GL(n,\Z)$--orbits (and more carefully, to find an automorphism taking one to another if it exists).
So it seems natural to conjecture the existence of a similar algorithm for $\AAG$:
\begin{conjecture}\label{con:autalg}
There is an algorithm which, given $u,v\in A_\Gamma$, produces $\alpha\in\AAG$ with $\alpha(u)=v$, or determines in finite time that no such automorphism exists.
\end{conjecture}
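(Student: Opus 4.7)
The plan is to adapt the classical Whitehead-algorithm strategy for $\Aut F_n$ to the present setting, with Theorem~\ref{mt:threeparts} playing the structural role that the free-group peak-reduction theorem plays there. The first reduction is from elements to conjugacy classes: given $u,v\in A_\Gamma$, I would look for $\alpha\in\AAG$ with $\alpha\cdot[u]=[v]$ and then, if one exists, correct $\alpha(u)$ to $v$ by an inner automorphism. The latter step uses the fact that the conjugacy problem in a right-angled Artin group is decidable. Hence the core task is to decide whether $[u]$ and $[v]$ lie in the same $\AAG$-orbit of conjugacy classes, and to produce a witnessing automorphism when they do.

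For that task I would split the search using \partref{it:complements}: any witness $\alpha$ factors as $\beta\gamma$ with $\gamma\in\langle\Omega_\ell\rangle$ and $\beta\in\langle\Omega_s\rangle$. This reduces the problem to finding a conjugacy class $[w]$ in the $\langle\Omega_\ell\rangle$-orbit of $[u]$ that also lies in the $\langle\Omega_s\rangle$-orbit of $[v]$. The $\langle\Omega_\ell\rangle$-orbit of $[u]$ can be explored effectively by \partref{it:lrwhalg}: repeated application of length-reducing long-range Whitehead automorphisms computes a finite set of minimal-length representatives, from which one may expand outward through classes of bounded length. In parallel, by \partref{it:srinj} together with Corollary~\ref{co:tvgenstruct}, questions about the $\langle\Omega_s\rangle$-action can be reduced to questions about a concretely described matrix subgroup of $\GL(n,\Z)$ --- an extension of a product of copies of $\SL(m,\Z)$ by a unipotent nilpotent group --- for which orbit problems on vectors are decidable by Hermite and Smith normal form methods. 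Lemma~\ref{le:srsupp} is useful here because it constrains the support of $\beta\cdot[u]$ to sit on cliques, which keeps the linear reduction honest.

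The main obstacle --- and the reason this remains a conjecture rather than a theorem --- is bounding the search on the $\langle\Omega_\ell\rangle$ side. Proposition~\ref{mp:nowhalg} shows that peak reduction fails on all of $\AAG$, and short-range automorphisms can both lengthen and shorten conjugacy classes in non-trivial ways. Without an a priori bound, in terms of $|[u]|$ and $|[v]|$, on $|\gamma\cdot[u]|$ for some successful $\gamma$, the enumeration of $\langle\Omega_\ell\rangle$-orbit representatives cannot be truncated and the algorithm need not terminate. The most promising route forward is to establish a strengthened relative peak-reduction theorem: that the $\langle\Omega_\ell\rangle$-orbit of $[u]$ always contains a representative whose length, or some short-range-invariant refinement of length, is bounded by an explicit function of $|[u]|$ and the combinatorics of $\Gamma$. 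Results in the spirit of Proposition~\ref{mp:specialwhalg}, together with the Whitehead-graph argument used in Corollary~\ref{co:fixvertsgenset}, suggest that such a bound should hold in considerable generality; making this heuristic into a uniform, algorithm-ready statement appears to be the essential remaining difficulty.
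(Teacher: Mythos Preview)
This statement is a \emph{conjecture}, not a theorem; the paper does not prove it and explicitly leaves it open. Your write-up is not a proof either, and you say as much: you correctly identify the main obstruction (no bound on $|\gamma\cdot[u]|$ for the long-range factor $\gamma$, in light of Proposition~\ref{mp:nowhalg}) and sketch the natural two-step strategy coming from Theorem~\ref{mt:threeparts}. That is exactly the viewpoint the paper takes in the closing remarks: \partref{it:lrwhalg} handles the $\langle\Omega_\ell\rangle$ side, \partref{it:srinj} suggests a linear-algebra approach on the $\langle\Omega_s\rangle$ side, and the difficulty is gluing the two. So your discussion is consistent with the paper's, and indeed more detailed than the paper's own commentary.

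One small correction of emphasis: in your factorization $\alpha=\beta\gamma$ you place $\gamma\in\langle\Omega_\ell\rangle$ and $\beta\in\langle\Omega_s\rangle$, whereas \partref{it:complements} gives $\alpha=\beta\gamma$ with $\beta\in\langle\Omega_s\rangle$ and $\gamma\in\langle\Omega_\ell\rangle$, i.e.\ the long-range part acts \emph{first} on $[u]$ only after the short-range part does. This matters for how you set up the search: you would want $[w]$ in the $\langle\Omega_s\rangle$-orbit of $[u]$ and in the $\langle\Omega_\ell\rangle$-orbit of $[v]$, or else invert and work from $[v]$. Also, your claim that orbit problems for the matrix group in Corollary~\ref{co:tvgenstruct} are decidable ``by Hermite and Smith normal form methods'' is plausible for vectors in $H_\Gamma$, but the relevant action is on conjugacy classes in $A_\Gamma$, not on their abelianizations; Lemma~\ref{le:srsupp} helps, but the reduction is not as immediate as you suggest. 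These are refinements rather than fatal gaps, since, as you and the paper both acknowledge, the conjecture remains open.
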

Part~(\ref{it:lrwhalg}) of Theorem~\ref{mt:threeparts} easily implies such an algorithm if we are only considering $\alpha\in\langle\Omega_\ell\rangle$, and \partref{it:srinj} suggests a row-reduction approach if we are only considering $\alpha\in\langle\Omega_\ell\rangle$.
However, it is not clear how these methods could be extended to apply to all of $\AAG$. 
Proposition~\ref{mp:nowhalg} indicates that it will not be possible to produce the algorithm in Conjecture~\ref{con:autalg} by a direct generalization of the approach for free groups. 

Finally, it may be possible to use these algorithmic techniques to improve our understanding of spaces that $\AAG$ acts on.
As in Culler-Vogtmann~\cite{cullervogtmann}, it should be possible to use peak-reduction techniques to find paths in $\AAG$--spaces that behave nicely with respect to combinatorial Morse functions.
In particular, this should help us to better understand outer space of right-angled Artin groups, as defined in Charney--Crisp--Vogtmann~\cite{ccv} for triangle-free $\Gamma$.
For general $\Gamma$, certain spaces of isometric actions of $A_\Gamma$ on CAT(0) cubical complexes are $\AAG$--spaces.
Hopefully our techniques could lead to a better understanding of these spaces as well.

\bibliographystyle{amsplain}
\bibliography{fpraag}

\noindent
Dept. of Mathematics, California Institute of Technology\\
Pasadena, Ca 91125\\
E-mail: {\tt mattday@caltech.edu}
\medskip

\end{document}